\newcommand{\N}{\mathbb{N}}
\newcommand{\R}{\mathbb{R}}
\newcommand{\E}{\mathbb{E}}
\newtheorem{theorem}{Theorem}[section]
\newtheorem{corollary}{Corollary}
\newtheorem{lemma}{Lemma}
\newtheorem{proposition}{Proposition}
\newtheorem{definition}{Definition}
\newtheorem{remark}{Remark}
\newtheorem{example}{Example}
\newtheorem{assumption}{Assumption}
\numberwithin{corollary}{section}
\numberwithin{lemma}{section}
\numberwithin{proposition}{section}
\numberwithin{definition}{section}
\numberwithin{remark}{section}
\numberwithin{equation}{section}
\numberwithin{table}{section}
\numberwithin{example}{section}
\numberwithin{assumption}{section}
\let\save@mathaccent\mathaccent
\newcommand*\if@single[3]{%
  \setbox0\hbox{${\mathaccent"0362{#1}}^H$}%
  \setbox2\hbox{${\mathaccent"0362{\kern0pt#1}}^H$}%
  \ifdim\ht0=\ht2 #3\else #2\fi
  }
\newcommand*\rel@kern[1]{\kern#1\dimexpr\macc@kerna}
\newcommand*\widebar[1]{\@ifnextchar^{{\wide@bar{#1}{0}}}
{\wide@bar{#1}{1}}}
\newcommand*\wide@bar[2]{\if@single{#1}{\wide@bar@{#1}{#2}{1}}
{\wide@bar@{#1}{#2}{2}}}
\newcommand*\wide@bar@[3]{%
  \begingroup
  \def\mathaccent##1##2{%
    \let\mathaccent\save@mathaccent
    \if#32 \let\macc@nucleus\first@char \fi
    \setbox\z@\hbox{$\macc@style{\macc@nucleus}_{}$}%
    \setbox\tw@\hbox{$\macc@style{\macc@nucleus}{}_{}$}%
    \dimen@\wd\tw@
    \advance\dimen@-\wd\z@
    \divide\dimen@ 3
    \@tempdima\wd\tw@
    \advance\@tempdima-\scriptspace
    \divide\@tempdima 10
    \advance\dimen@-\@tempdima
    \ifdim\dimen@>\z@ \dimen@0pt\fi
    \rel@kern{0.6}\kern-\dimen@
    \if#31
      \overline{\rel@kern{-0.6}\kern\dimen@\macc@nucleus\rel@kern{0.4}
      \kern\dimen@}%
      \advance\dimen@0.4\dimexpr\macc@kerna
      \let\final@kern#2%
      \ifdim\dimen@<\z@ \let\final@kern1\fi
      \if\final@kern1 \kern-\dimen@\fi
    \else
      \overline{\rel@kern{-0.6}\kern\dimen@#1}%
    \fi
  }%
  \macc@depth\@ne
  \let\math@bgroup\@empty \let\math@egroup\macc@set@skewchar
  \mathsurround\z@ \frozen@everymath{\mathgroup\macc@group\relax}%
  \macc@set@skewchar\relax
  \let\mathaccentV\macc@nested@a
  \if#31
    \macc@nested@a\relax111{#1}%
  \else
    \def\gobble@till@marker##1\endmarker{}%
    \futurelet\first@char\gobble@till@marker#1\endmarker
    \ifcat\noexpand\first@char A\else
      \def\first@char{}%
    \fi
    \macc@nested@a\relax111{\first@char}%
  \fi
  \endgroup
}
\begin{document}

\title{A Bayesian Risk Approach to Data-driven Stochastic Optimization: Formulations and Asymptotics}
\author{Di Wu \quad Helin Zhu \quad Enlu Zhou\\
H. Milton Stewart School of Industrial and Systems Engineering,\\
Georgia Institute of Technology}
\date{}
\maketitle

\begin{abstract}
A large class of stochastic programs involve optimizing an expectation taken with respect to an underlying distribution that is unknown in practice. One popular approach to addressing the distributional uncertainty, known as the distributionally robust optimization (DRO), is to hedge against the worst case over an uncertainty set of candidate distributions. However, it has been observed that inappropriate construction of the uncertainty set can sometimes result in over-conservative solutions. To explore the middle ground between optimistically ignoring the distributional uncertainty and pessimistically fixating on the worst-case scenario, we propose a Bayesian risk optimization (BRO) framework for parametric underlying distributions, which is to optimize a risk functional applied to the posterior distribution of an unknown distribution parameter. Of our particular interest are four risk functionals: mean, mean-variance, value-at-risk, and conditional value-at-risk. To unravel the implication of BRO, we establish the consistency of objective functions and optimal solutions, as well as the asymptotic normality of objective functions and optimal values. More importantly, our analysis reveals a hidden interpretation: the objectives of BRO can be approximately viewed as a weighted sum of posterior mean objective and the (squared) half-width of the true objective's confidence interval.\\
\\
{\bf Keywords}: data-driven optimization, risk measures, Bayesian asymptotics.
\end{abstract}

\section{Introduction} \label{sec:1}
We consider a classical stochastic program
\begin{equation} \label{eq1.1}
\min_{x\in \mathcal{X}} \E_{\mathbb{P}^c}[h(x, \xi)],
\end{equation}
where $\mathcal{X}$ is a closed subset of $\R^d$ that implicitly incorporates any constraints, $\xi$ is a random vector taking value in $\R^m$, and $h$ is a function that maps $\R^d \times \R^m$ to $\R$. The expectation $\E$ is taken with respect to (w.r.t.) the distribution of $\xi$, denoted by $\mathbb{P}^c$. Although \cref{eq1.1} formulates a broad range of decision-making problems, it seems to overlook the fact that $\mathbb{P}^c$ is rarely known exactly in practice. More likely is that only an estimate $\hat{\mathbb{P}}$ can be obtained using finite real-world data. However, due to finite-sample error, even if the approximate problem
\begin{equation} \label{eq1.2}
\min_{x\in \mathcal{X}} \E_{\hat{\mathbb{P}}}[h(x, \xi)]
\end{equation}
is solved to optimality, the resulting solution may not perform quite as well under the true distribution $\mathbb{P}^c$. One popular approach to addressing this issue is to apply the framework of distributionally robust optimization (DRO), where one uses available data to construct an uncertainty/ambiguity set $\mathcal{D}$ that contains $\mathbb{P}^c$ with a high probability, and then optimize over $\mathcal{D}$ by hedging against the worst case, i.e.,
\begin{equation} \label{eq1.3}
\min_{x\in \mathcal{X}}\max_{\mathbb{P}\in \mathcal{D}} \E_{\mathbb{P}}[h(x, \xi)].
\end{equation}
An abundant literature exist on DRO and we refer the reader to \cite{bertsimas2013data,wiesemann2014distributionally,esfahani2015data,delage2010distributionally} for reviews and recent development. The key to the success of DRO is to construct a reasonable $\mathcal{D}$ such that \cref{eq1.3} is computationally tractable while maintaining certain performance guarantees. Two typical ways to construct $\mathcal{D}$ are based on: (i) distance metrics, such as $\phi$-divergence and Wasserstein distance (see e.g. \cite{bayraksan2015data,ben2013robust,jiang2015data,esfahani2015data,gao2016distributionally}); (ii) moment constraints (see e.g. \cite{scarf1958min,delage2010distributionally,popescu2007robust,zymler2013distributionally}). 
However, DRO's reliance on $\mathcal{D}$ can be a double-edged sword: as is observed in \cite{wang2016likelihood}, an inappropriately constructed $\mathcal{D}$ may lead to overly conservative solutions, i.e., solutions that perform poorly under far more realistic scenarios than the worst case.

The aforementioned drawbacks of \cref{eq1.2} and \cref{eq1.3} make us realize that there are a range of options from optimistically ignoring the distributional uncertainty to pessimistically fixating on the worst case. In this paper, we explore the middle ground by taking a Bayesian approach towards the uncertainty in $\mathbb{P}^c$. Suppose $\mathbb{P}^c$ belongs to a parametric family $\{\mathbb{P}_{\theta} \mid \theta \in \Theta\}$, where $\Theta$ is the parameter space and $\theta^c \in \Theta$ is the unknown true parameter. 
Through a Bayesian perspective, $\theta^c$ can be viewed as a realization of a belief random variable $\tilde{\theta}$, whose posterior distribution can be computed based on given data. Informally, we propose to solve the problem
\begin{equation} \label{eq1.4}
\min_{x\in \mathcal{X}} \rho_{\, \mathbb{P}_n}\left\{\E_{\mathbb{P}_\theta}[h(x, \xi)]\right\}.
\end{equation}
In \cref{eq1.4}, $\rho$ is a risk functional applied to $\E_{\mathbb{P}_\theta}[h(x, \xi)]$, which is a random variable induced by  $\theta \sim \mathbb{P}_n$, where $ \mathbb{P}_n$ is the posterior distribution of $\tilde{\theta}$. We will refer to \cref{eq1.4} as the Bayesian risk optimization (BRO) problem. The framework of BRO first appeared in our preliminary work \cite{zhou2015simulation}, in which four choices of $\rho$ were considered: mean, mean-variance, value-at-risk, and conditional value-at-risk. Intuitively speaking, the posterior serves as a natural indicator of the uncertainty about $\theta^c$, thus playing a similar role to that of the $\mathcal{D}$ in DRO. A formal and detailed introduction to BRO is deferred to \cref{sec:2.2}.

The research on solving the BRO problem is still an ongoing progress. In some cases, the structure of $\rho$ can be exploited to develop efficient computational methods. For instance, \cite{zhu2016simulation} proposes a Monte Carlo method to solve the conditional value-at-risk formulation by adaptively adjusting the risk level. Although tractability is of great practical interest to study, understanding the implications of BRO is at least as important as solving it. To date, there has only been empirical evidence suggesting the possible benefits of considering such an optimization problem. It is therefore the primary goal of this paper to develop a deeper understanding of BRO. Specifically, we achieve this by conducting a standard investigation of its asymptotic behaviors. Focusing on the aforementioned four choices of $\rho$, we establish the consistency of objective functions and optimal solutions, as well as the asymptotic normality of objective functions and optimal values. More importantly, our analysis reveals a hidden interpretation: \emph{the objectives of BRO can be approximately viewed as a weighted sum of posterior mean objective and the (squared) half-width of the true objective's confidence interval.} Interestingly, similar insight has also been developed for DRO in \cite{gotoh2015robust}, which shows that a large class of robust empirical optimization problems are essentially equivalent to a mean-variance formulation. In the same spirit, \cite{lam2016robust} showed that the robust sensitivity of an expectation with respect to the unknown distribution can be decomposed as the mean plus a term depending on the standard deviation. Nevertheless, due to the many differences between DRO and BRO, a direct comparison between these two formulations is difficult, if not impossible. Instead, our work aims to provide a different perspective on approaching distributional uncertainty, which hopefully will add one more option to the toolbox of practitioners.

The statistical properties of stochastic programs have been examined under a variety of contexts. For example, \cite{shapiro1991asymptotic} discusses a general approach to studying the asymptotics of statistical estimators in stochastic programming, and \cite{kleywegt2002sample} investigates the asymptotic properties of optimal values and solutions for the sample average approximation problem. Notably, \cite{dentcheva2016statistical} also establishes central limit theorems for composite risk functionals, and discusses the asymptotic behavior of stochastic programs with objectives being composite risk functionals. Nonetheless, aside from the difference in settings (frequentist vs. Bayesian), the major distinctions between \cite{dentcheva2016statistical} and our work lie in the distributions to which risk functionals are applied and the associated proof techniques. On the one hand, \cite{dentcheva2016statistical} considers a class of risk functionals applied to a sequence of empirical distributions, where a version of uniform Central Limit Theorem and an extended Delta Theorem can be applied to show the weak convergence of risk functionals. On the other hand, we apply risk functionals to a sequence of posterior distributions, where a Bayesian Central Limit Theorem guarantees that the total variation distance between the posterior distribution and a normal distribution (with a random mean) vanishes in probability (see \cref{lemma4.2}). The theorem's intricate form adds more technicalities and subtleties to our analysis. In particular, a given risk functional is not necessarily uniformly continuous relative to the total variation metric. In addition, while \cite{dentcheva2016statistical} studies the asymptotic properties of risk functional as an estimator of a ``true'' functional, we study similar asymptotics in an effort to uncover the implications of solving the proposed BRO problem.


The rest of the paper is organized as follows. We explicitly construct an underlying probability space and provide a formal introduction to BRO in \cref{sec:2}. \Cref{sec:3,sec:4} are devoted to establishing consistency and asymptotic normality results related to BRO, respectively. We reveal the hidden implication of solving BRO in \cref{sec:5}, and finally conclude in \cref{sec:6}.

\section{Preliminaries} \label{sec:2}
We begin by restating the problem settings and formalizing Bayesian framework in a measure-theoretic context. Then, a formal introduction to BRO is given along with a discussion on its connections with DRO.

\subsection{Construction of probability space} \label{sec:2.1}
We consider a case where $\mathbb{P}^c$, i.e., the true underlying distribution of $\xi$ in \cref{eq1.1}, belongs to a parametric family of distributions $\{\mathbb{P}_{\theta} \mid \theta \in \Theta\}$, where $\Theta$ is the parameter space. In particular, this encompasses distributions with a finite support, where the probability mass vector can be treated as a finite-dimensional parameter. Suppose the form of $\mathbb{P}_{\theta}$ is known but the true parameter $\theta^c$ is not. Through a Bayesian perspective, we view $\theta^c$ as a realization of a belief random variable $\tilde{\theta}$. Denote by $\pi$ the prior distribution of $\tilde{\theta}$.  Also assume that we have a dataset $\{\xi_i\}_{i=1}^n$, which, conditioned on $\tilde{\theta}$, are $n$ independent and identically distributed (i.i.d.) samples with distribution $\mathbb{P}_{\tilde{\theta}}$. To perform a rigorous analysis of BRO's asymptotic properties, we explicitly construct a probability space $(\Omega, \mathcal{F}, \mu)$ such that (i) both $\tilde{\theta}$ and $\xi_i$ are random variables defined on this space; (ii) $\tilde{\theta}$ follows the prior distribution $\pi$; (iii) conditioned on $\tilde{\theta}$,  $\{\xi_i\}$ are i.i.d. samples from $\mathbb{P}_{\tilde{\theta}}$. Our construction follows the standard approach in Bayesian literature (e.g., \cite{schwartz1965bayes}).

Suppose that $\tilde{\theta}$ takes value in a parameter space $\Theta \subset \R^l$ equipped with a Borel $\sigma$-algebra $\mathcal{B}_\Theta$ and a probability measure $\pi$, while $\xi_i$ takes value in $\Xi \subset \R^m$ equipped with a Borel $\sigma$-algebra $\mathcal{B}_{\Xi}$ and a collection of probability measures $\{\mathbb{P}_{\theta}\}_{\theta \in \Theta}$. Then, the probability space induced by $n$ i.i.d. copies of $\xi_i$ is $(\Xi^n, \mathcal{B}_{\Xi}^n,  \mathbb{P}^n_{\theta})$, where $\mathcal{B}_{\Xi}^n$ is the product $\sigma$-algebra $\mathcal{B}_{\Xi} \otimes \cdots \otimes \mathcal{B}_{\Xi}$ \footnote{For two $\sigma$-algebras $\Sigma_1$ and $\Sigma_2$, $\Sigma_1 \otimes \Sigma_2$ is defined as $\sigma\left(\Sigma_1 \times \Sigma_2 \right)$.} and $\mathbb{P}^n_{\theta}$ is the product measure $\mathbb{P}_{\theta} \times \cdots \times \mathbb{P}_{\theta}$. Next, we apply Kolmogorov's Extension Theorem (see, e.g., \cite{durrett2010probability}, Theorem A.3.1) to extend $(\Xi^n, \mathcal{B}_{\Xi}^n,  \mathbb{P} ^n_{\theta})$ to a sequence space $(\Xi^{\N}, \mathcal{B}^{\N}_{\Xi}, \mathbb{P}^{\N}_{\theta})$, where $\Xi^{\N}$ is the space of all infinite sequences in $\Xi$, and $\mathcal{B}^{\N}_{\Xi}$ is the $\sigma$-algebra generated by all cylinder sets of the form
$$\left\{\bar{\xi} \in \Xi^{\N} \mid (\bar{\xi}_1, \bar{\xi}_2, \ldots, \bar{\xi}_n) \in B \right\}, \quad B \in \mathcal{B}^n_{\Xi},$$
where $\bar{\xi}_i$ is the $i$th entry of the sequence $\bar{\xi}$. Correspondingly, $\mathbb{P}^{\N}_{\theta}$ is the product measure that coincides with $\mathbb{P}^n_{\theta}$ on $\mathcal{B}^n_{\Xi}$, i.e.,
$$\mathbb{P}^{\N}_{\theta} \left(\left\{\bar{\xi} \in \Xi^{\N} \mid (\bar{\xi}_1, \bar{\xi}_2, \ldots, \bar{\xi}_n) \in B\right\} \right) = \mathbb{P}^n_{\theta}(B),  \quad \forall B \in \mathcal{B}^n_{\Xi}.$$
Let $\Omega = \Theta \times \Xi^{\N}$ be the sample space equipped with the $\sigma$-algebra $\mathcal{F} = \mathcal{B}_{\Theta} \otimes \mathcal{B}^{\N}_{\Xi}$. We assume that the following holds throughout the paper.

\begin{assumption} \label{assump1}
 $\mathbb{P}_{\theta}(B)$ is a measurable function of $\theta$ for all $B \in \mathcal{B}^{\N}_{\Xi}$.
\end{assumption}

Under \cref{assump1}, we define a measure $\mu$ on the collection of all rectangle sets in $\mathcal{F}$ with the following property.
\begin{equation} \label{eq2.1}
\mu(A \times B) := \int_A \int_B \mathbb{P}^{\N}_{\theta}(d\xi) \pi(d\theta) = \int_A \mathbb{P}^{\N}_{\theta}(B) \pi(d\theta), \quad \forall A\in \mathcal{B}_{\Theta}, B \in \mathcal{B}^{\N}_{\Xi}.
\end{equation}
The integrals in \cref{eq2.1} are well-defined by \cref{assump1}. Moreover, since the rectangle sets form a semialgebra (see, e.g., \cite{durrett2010probability}, page 3), there exists a unique extension of $\mu$ to $\mathcal{F}$ by Carath\'eodory's Extension Theorem (see, e.g., \cite{durrett2010probability}, Theorem A.1.3).
\begin{proposition}\label{prop2.1}
For any $C \in \mathcal{F}$, we have $\mu(C) = \int_{\Theta} \mathbb{P}^{\N}_{\theta} (C_\theta) \pi(d\theta)$, where $C_{\theta}$ is defined as $\left\{\bar{\xi} \in \Xi^{\N}: (\theta, \bar{\xi}) \in C\right\}.$
\end{proposition}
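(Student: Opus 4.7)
The plan is to apply Dynkin's $\pi$-$\lambda$ theorem to promote the desired representation from the semialgebra of rectangle sets---where it holds by the very definition \cref{eq2.1}---to all of $\mathcal{F}$. To this end, define a set function
\[
\nu(C) := \int_{\Theta} \mathbb{P}^{\N}_\theta(C_\theta)\,\pi(d\theta), \qquad C \in \mathcal{F},
\]
and aim to show that (i) the integrand is $\mathcal{B}_\Theta$-measurable in $\theta$, so $\nu$ is well-defined; (ii) $\nu$ is a probability measure on $(\Omega,\mathcal{F})$; and (iii) $\nu$ agrees with $\mu$ on the generating semialgebra of rectangles. Then the uniqueness part of Carath\'eodory's extension theorem (already invoked to define $\mu$) yields $\mu=\nu$ on $\mathcal{F}$.

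For the measurability and well-definedness in (i), I would introduce
\[
\mathcal{L} := \bigl\{C \in \mathcal{F}\ :\ \theta \mapsto \mathbb{P}^{\N}_\theta(C_\theta) \text{ is Borel-measurable}\bigr\}
\]
and verify that $\mathcal{L}$ is a $\lambda$-system containing the $\pi$-system of rectangles $A\times B$. For a rectangle, $C_\theta = B$ when $\theta \in A$ and $\emptyset$ otherwise, so $\mathbb{P}^{\N}_\theta(C_\theta) = \mathbb{P}^{\N}_\theta(B)\,\mathbbm{1}_A(\theta)$, which is measurable by \cref{assump1}. Closure of $\mathcal{L}$ under complements uses $(C^c)_\theta = (C_\theta)^c$ and $\mathbb{P}^{\N}_\theta((C_\theta)^c) = 1 - \mathbb{P}^{\N}_\theta(C_\theta)$; closure under countable disjoint unions uses the countable additivity of each $\mathbb{P}^{\N}_\theta$ together with the fact that a pointwise sum of measurable functions is measurable. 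The $\pi$-$\lambda$ theorem then gives $\mathcal{L} = \mathcal{F}$.

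For (ii), $\nu(\emptyset)=0$ is immediate, $\nu(\Omega) = \int_\Theta \mathbb{P}^{\N}_\theta(\Xi^\N)\,\pi(d\theta) = 1$, and countable additivity of $\nu$ follows from the monotone convergence theorem applied to the nondecreasing partial sums $\sum_{k=1}^n \mathbb{P}^{\N}_\theta((C_k)_\theta)$ for a disjoint sequence $\{C_k\}\subset\mathcal{F}$, combined with the countable additivity of $\mathbb{P}^{\N}_\theta$ for each fixed $\theta$. For (iii), comparing with \cref{eq2.1} shows $\nu(A\times B) = \int_A \mathbb{P}^{\N}_\theta(B)\,\pi(d\theta) = \mu(A\times B)$. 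Hence $\nu$ and $\mu$ are two probability measures on $\mathcal{F}$ that agree on the rectangle semialgebra generating $\mathcal{F}$, and uniqueness of the Carath\'eodory extension forces $\mu=\nu$ throughout $\mathcal{F}$.

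The only mildly delicate step is the $\lambda$-system check inside (i): one must simultaneously track that the sectioning operation $C\mapsto C_\theta$ commutes with countable set-theoretic operations and that measurability in $\theta$ is preserved along the way. Once this is done, the remaining pieces---verifying $\nu$ is a probability measure and matching it with $\mu$ on rectangles---are routine applications of monotone convergence and of the uniqueness clause of Carath\'eodory's theorem.
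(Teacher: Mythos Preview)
Your proof is correct and rests on the same underlying $\pi$-$\lambda$ machinery as the paper's, but the organization differs slightly. The paper applies Dynkin's theorem directly to the agreement class $\mathcal{L}=\{C\in\mathcal{F}:\mu(C)=\int_\Theta \mathbb{P}^{\N}_\theta(C_\theta)\,\pi(d\theta)\}$, checking that $\mathcal{L}$ is a $\lambda$-system containing the rectangles; this folds your steps (i)--(iii) into a single verification. Your route instead separates measurability of $\theta\mapsto\mathbb{P}^{\N}_\theta(C_\theta)$ (via $\pi$-$\lambda$), then establishes that $\nu$ is a probability measure, and finally appeals to the uniqueness clause of Carath\'eodory's extension. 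The paper's version is more compressed but leaves the measurability of the integrand implicit in the $\lambda$-system check; your version is longer but makes that point explicit, which is a genuine advantage in clarity. Substantively the two arguments are equivalent, since the uniqueness part of Carath\'eodory's theorem is itself typically proved by the same $\pi$-$\lambda$ step the paper uses.
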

\begin{proof}
Let $\mathcal{L} := \left\{C \in \mathcal{F} \mid \mu(C) = \int_{\Theta} \mathbb{P}^{\N}_{\theta} (C_\theta) \pi(d\theta)\right\}$ and $\mathcal{P}: = \{A\times B\mid A \in \Theta, B \in \Xi^{\N}\}$. Then $\mathcal{L}$ is a $\lambda$-system and $\mathcal{P}$ is a $\pi$-system. From \cref{eq2.1} we know $\mathcal{P} \subseteq \mathcal{L}$, so $\mathcal{F} = \sigma(\mathcal{P}) \subseteq \mathcal{L}$ by Dynkin's $\pi-\lambda$ Theorem (see, e.g., \cite{durrett2010probability}, Theorem A.1.4).
\end{proof}

\begin{remark} \label{remark1}
\cref{prop2.1} characterizes $\mu$ for all sets in $\mathcal{F}$. Loosely speaking, $\mu$ is the joint distribution of $\tilde{\theta}$ and an infinite sequence $\{\xi_i\}$ that is i.i.d. conditioned on $\tilde{\theta}$. An important observation is that if $C \in \mathcal{F}$ has $\mu(C)=1$, then $\mathbb{P}_{\theta}(C_{\theta}) = 1$ for all $\theta \in \Theta$ up to a set of measure 0 under $\pi$. This particular mode of convergence is due to $1 = \mu(C) = \int_{\Theta} \mathbb{P}_{\theta}(C_{\theta}) \pi(d\theta)$.
\end{remark}

It remains to define $\tilde{\theta}$ and $\xi_i$ as random variables on $(\Omega, \mathcal{F}, \mu)$. Take $\omega \in \Omega$ and write $\omega$ as $(\omega_{\theta}, \omega_{\xi})$ such that $\omega_{\theta} \in \Theta$ and $\omega_{\xi} \in \Xi^{\N}$. Define $\tilde{\theta}(\omega) := \omega_{\theta}$ and $\xi_i(\omega) := (\omega_{\xi})_i$, where $(\cdot)_i$ extracts the $i$th entry of a sequence. Under this type of construction, every realization $\omega$ yields a parameter $\tilde{\theta}$ and an infinite sequence $\{\xi_i\}$. Furthermore, it can be verified that conditioned on $\tilde{\theta}$, $\{\xi_i\}$ are i.i.d. samples from $\mathbb{P}_{\tilde{\theta}}$. The Bayes estimator (under a quadratic loss function) can be expressed by $\E_{\mu}[\tilde{\theta} \mid \mathcal{F}_n]$, where $\mathcal{F}_n := \sigma(\xi_1, \ldots, \xi_n)$ is the filtration generated by data, and the posterior distribution of $\tilde{\theta}$ is given by $\mathbb{P}_n(\cdot):= \mathbb{\mu}(\{\omega \in\Omega \mid \tilde{\theta}(\omega) \in \cdot\} \mid \mathcal{F}_n)$. 

\subsection{Bayesian risk optimization} \label{sec:2.2}
Under the parametric assumption on $\mathbb{P}^c$ in \cref{sec:2.1}, we define the following function for notational brevity.
\begin{equation} \label{eq2.2}
H(x, \theta) : = \E_{\mathbb{P}_\theta}[h(x, \xi)].
\end{equation}

Then \cref{eq1.1} and \cref{eq1.3} can be rewritten as
\begin{equation} \label{eq2.3}
\min_{x\in \mathcal{X}} H(x, \theta^c) \qquad \text{and} \qquad \min_{x\in \mathcal{X}} \max_{\theta \in \tilde{\Theta}} H(x, \theta),
\end{equation}
respectively, where $\tilde{\Theta}$ is a subset of $\Theta$. We assume that $H$ is finite for every pair $(x, \theta) \in \mathcal{X} \times \Theta$. Notice that a well-designed $\tilde{\Theta}$ in \cref{eq2.3} should reflect the level of uncertainty in the data to infer $\theta^c$. For example, it is preferable if the diameter of $\tilde{\Theta}$ shrinks as more data are observed. Meanwhile, we know from Bayesian asymptotic theory that the posterior converges weakly to a point mass on $\theta^c$ at an exponential rate \cite{shen2001rates}. This motivates the idea of using the posterior distribution of $\tilde{\theta}$ to capture the degree of parameter uncertainty, which leads to the following BRO problem.
\begin{equation} \label{eq2.4}
\min_{x \in \mathcal{X}} \rho_{\, \mathbb{P}_n} \left[H(x, \theta) \right].
\end{equation}
In \cref{eq2.4}, $\rho$ is a risk functional which is defined as a mapping from a random variable to a real number, and $\mathbb{P}_n$ is the posterior distribution of $\tilde{\theta}$ given $n$ data samples. In particular, a risk functional that is subadditive, monotonically increasing, positive homogenous and translation-invariant is called a \emph{coherent risk measure}. We refer the reader to \cite{artzner1999coherent,rockafellar2013fundamental,shapiro2014lectures} and the references therein for an axiomatic definition of coherent risk measures and its related discussions. Numerous choices of $\rho$ can be applied to \cref{eq2.4}. We follow \cite{zhou2015simulation} and investigate the following four of them in this paper.
\begin{enumerate}
\item
The mean and mean-variance formulations:
\begin{equation*}
\min_{x \in \mathcal{X}} \E_{\mathbb{P}_n}\left[H(x, \theta) \right] + w \text{Var}_{\mathbb{P}_n}\left[H(x, \theta) \right], \quad w\ge0.
\end{equation*}
\item
The value-at-risk (VaR) formulation:
\begin{equation*}
\min_{x \in \mathcal{X}} \text{VaR}_{\mathbb{P}_n}^{\alpha}\left[H(x, \theta) \right], \quad \alpha \in (0,1).
\end{equation*}
\item
The conditional value-at-risk (CVaR) formulation:
\begin{equation*}
\min_{x \in \mathcal{X}} \text{CVaR}_{\mathbb{P}_n}^{\alpha}\left[H(x, \theta) \right], \quad \alpha \in (0,1).
\end{equation*}
\end{enumerate}

In particular, VaR and CVaR are two commonly used risk measures in financial engineering for controlling large loss. For a random variable $X$, $\text{VaR}^{\alpha} (X)$ is defined as the $\alpha$-quantile of $X$, i.e.,
\begin{equation*}
\text{VaR}^{\alpha}(X) := \inf \{t: \mathbb{P}(X \le t) \ge \alpha \},
\end{equation*}
and CVaR is defined as the expected loss beyond VaR,
\begin{equation*}
\text{CVaR}^{\alpha}(X) := \frac{1}{1-\alpha} \int_\alpha^1 \text{VaR}^r(X) dr.
\end{equation*}
When $\mathbb{P}(X = \text{VaR}^\alpha(X)) = 0$, CVaR can also be written as a conditional expectation,
\begin{equation} \label{eq2.5}
\text{CVaR}^{\alpha}(X) := \E[X \mid X \ge \text{VaR}^\alpha(X)].
\end{equation}

A risk functional is called \emph{law-invariant} if it depends only on the distribution of the random variable. We remark that all four choices of $\rho$ considered here are law-invariant. Furthermore, mean and CVaR are coherent risk measures; VaR is a risk measure but is not coherent because it is not subadditive; mean-variance is not a risk measure for its lack of monotonicity.

Some connections between BRO and DRO are drawn as follows. First, coherent risk measures can be represented as optimization problems using duality theory (see, e.g, \cite{shapiro2014lectures}, Section 6.3), which allows for a DRO interpretation in terms of ambiguity sets. Second, it is possible to reformulate a BRO as a DRO problem. For example, let $\rho$ be VaR with a risk level $\alpha$. Suppose that  $H$ is continuous on $\mathcal{X} \times \Theta$, $\Theta$ is compact and $\mathbb{P}_n$ has a positive density on $\Theta$, then for $\alpha = 100\%$, \cref{eq2.4} can be rewritten as
\begin{equation} \label{eq2.6}
\min_{x\in \mathcal{X}}\text{VaR}^{100\%}_{\mathbb{P}_n} \left[H(x, \theta)\right] = \min_{x\in \mathcal{X}}\max_{\theta\in \Theta}H(x, \theta),
\end{equation}
where the right hand side (RHS) corresponds to DRO with $\Theta$ being viewed as an ambiguity set of $\theta^c$. It can also be observed that by adjusting the risk level $\alpha$, the VaR objective can easily accommodate a wide range of risk preferences from being overly optimistic to being highly risk-averse.

We highlight a few main results before proceeding to the proofs. Let $\mathcal{N}$ denote a normal distribution, and let $\phi$ and $\Phi$ denote $\mathcal{N}(0,1)$'s density and cumulative distribution function, respectively. We use ``$\Rightarrow$'' to denote weak convergence (see \cref{def3.2}). The following results are in the pointwise sense, i.e., they hold for every fixed $x \in \mathcal{X}$. Specifically, as $n \rightarrow \infty$,
\begin{enumerate}
\item[(i)]
for the mean and mean-variance objectives,
\begin{equation*}
\sqrt{n}\left\{\E_{\mathbb{P}_n}[H(x,\theta)] + w\text{Var}_{\mathbb{P}_n}[H(x,\theta)] - H(x,\theta^c) \right\} \Rightarrow \mathcal{N}\left(0, \sigma_x^2\right);
\end{equation*}

\item[(ii)]
for the VaR objective,
\begin{equation*}
\sqrt{n}\left\{\text{VaR}_{\mathbb{P}_n}^\alpha[H(x,\theta)] - H(x,\theta^c) \right\} \Rightarrow \mathcal{N}(\sigma_x \Phi^{-1}(\alpha), \sigma_x^2);
\end{equation*}

\item[(iii)]
for the CVaR objective,
\begin{equation*}
\sqrt{n} \left\{\text{CVaR}_{\mathbb{P}_n}^\alpha [H(x,\theta)] - H(x,\theta^c) \right\} \Rightarrow \mathcal{N}\left(\frac{\sigma_x}{1-\alpha}\phi(\Phi^{-1}(\alpha)), \sigma_x^2 \right).
\end{equation*}
\end{enumerate}
In (i)-(iii), the limiting variance in the RHS is defined as
$$\sigma_x^2: = \nabla_\theta H(x, \theta^c)^\intercal [I(\theta^c)]^{-1} \nabla_\theta H(x, \theta^c),$$
where $\nabla_\theta H(x,\theta^c)$ is the gradient of $H(x, \cdot)$ at $\theta^c$, the superscript``$\intercal$'' stands for transpose, and $I(\theta^c)$ is the Fisher information that $\xi_i$ carries about $\theta^c$. An immediate consequence of (i)-(iii) is that confidence intervals (CIs) can be constructed for $H(x, \theta^c)$, which is the true objective value. More importantly, as we will show in \cref{sec:5}, these results imply that the objectives of BRO problems are approximately equivalent to a weighted sum of posterior mean objective and the (squared) half-width of the true objective's CI. In other words, BRO essentially seeks to balance the trade-off between posterior mean performance and the robustness in actual performance.

\section{Consistency of Bayesian risk optimization} \label{sec:3}
Since the distributional uncertainty diminishes as $n \rightarrow \infty$, one naturally expects the objectives of BRO problems to recover the true objective $H(\cdot, \theta^c)$, and the optimal solutions of BRO problems to converge to the true optimal solutions.  Let $D_{KL}(P \| Q)$ denote the K-L divergence between two distributions $P$ and $Q$, and let ``a.s.'' be short for ``almost surely''. The following assumption is made to guarantee the strong consistency of posterior distribution.
\begin{assumption}[Sufficient conditions for consistency under $\mathbb{P}^{\N}_{\theta^c}$] \label{assump3.1}
\quad
\begin{enumerate}
\item[(i)]
$\Theta$ is a compact set.
\item[(ii)]
For all $n$, $\mathbb{P}^n_{\theta}(\cdot)$ \footnote{Recall from \cref{sec:2.1} that $\mathbb{P}^n_{\theta}$ is defined as the product measure of $n$ copies of $\mathbb{P}_\theta$.} has a density $p^n_{\theta}(\cdot)$ that is $\mathcal{B}_{\Theta} \otimes \mathcal{B}_{\Xi}$-measurable.
\item[(iii)]
For any neighborhood $V \in \mathcal{B}_{\Theta}$ of $\theta^c$, there exists a sequence of uniformly consistent tests of the hypothesis $\tilde{\theta} = \theta^c$ against the alternative $\tilde{\theta} \in \Theta \setminus V$ \footnote{This condition implies separability of $\theta^c$ from $\Theta \setminus V$. For more details on uniformly consistent tests, we refer the reader to \cite{schwartz1965bayes}.} .
\item[(iv)]
For any $\epsilon>0$ and any neighborhood $V \in \mathcal{B}_{\Theta}$ of $\theta^c$, $V$ contains a subset $W$ such that $\pi(W)>0$ and $D_{KL}\left(p_{\theta^c} \| p_{\theta}\right)<\epsilon$ for all $\theta \in W$.
\end{enumerate}
\end{assumption}

\begin{lemma}[\cite{schwartz1965bayes}, Theorem 6.1] \label{lemma3.1}
Suppose \cref{assump3.1} holds. Then for any neighborhood $V \in \mathcal{B}_{\Theta}$ of $\theta^c$, $\mathbb{P}_n(V) \rightarrow 1$ a.s. $(\mathbb{P}^{\N}_{\theta^c})$ as $n \rightarrow \infty$.
\end{lemma}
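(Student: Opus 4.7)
The plan is to follow Schwartz's classical argument for Bayesian posterior consistency, which involves writing the posterior mass on $V^c$ as a ratio via Bayes' formula and then controlling the numerator and denominator separately on an exponential scale. By Assumption 3.1(ii) and \cref{prop2.1}, Bayes' rule gives, after dividing numerator and denominator by $\prod_{i=1}^n p_{\theta^c}(\xi_i)$,
\begin{equation*}
\mathbb{P}_n(V^c) \;=\; \frac{\displaystyle\int_{V^c} \prod_{i=1}^n \frac{p_\theta(\xi_i)}{p_{\theta^c}(\xi_i)}\, \pi(d\theta)}{\displaystyle\int_{\Theta} \prod_{i=1}^n \frac{p_\theta(\xi_i)}{p_{\theta^c}(\xi_i)}\, \pi(d\theta)} \;=:\; \frac{N_n}{D_n}.
\end{equation*}
It will suffice to show that $D_n \ge e^{-n\epsilon}$ eventually a.s. for every $\epsilon>0$, while $N_n \le e^{-nc}$ eventually a.s. for some fixed $c>0$, so that the ratio vanishes a.s.

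For the denominator, I would invoke Assumption 3.1(iv): pick $W \subseteq V$ with $\pi(W)>0$ and $D_{KL}(p_{\theta^c}\|p_\theta)<\epsilon/2$ for all $\theta \in W$. Writing the inner product as an exponential of a sample average, Jensen's inequality applied to $\exp$ gives
\begin{equation*}
\tfrac{1}{n}\log D_n \;\ge\; \tfrac{1}{n}\log \pi(W) + \int_W \tfrac{1}{\pi(W)} \Bigl[\tfrac{1}{n}\sum_{i=1}^n \log \tfrac{p_\theta(\xi_i)}{p_{\theta^c}(\xi_i)}\Bigr]\, \pi(d\theta).
\end{equation*}
By Fubini and the strong law of large numbers (with a uniform integrability / dominated convergence step for exchanging $\int_W$ and $\lim$), the right-hand side converges a.s. to $-\int_W D_{KL}(p_{\theta^c}\|p_\theta)\, \pi(d\theta)/\pi(W) \ge -\epsilon/2$, so $D_n \ge e^{-n\epsilon}$ eventually a.s.

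For the numerator, I would use Assumption 3.1(iii) to obtain a sequence of tests $\phi_n: \Xi^n \to \{0,1\}$ with both $\E_{\theta^c}[\phi_n]$ and $\sup_{\theta \in V^c}\E_\theta[1-\phi_n]$ decaying exponentially, say like $e^{-cn}$ for some $c>0$. Split $N_n = \phi_n N_n + (1-\phi_n) N_n$. The first term is 0 except on events whose $\mathbb{P}^{\mathbb{N}}_{\theta^c}$-probabilities are summable in $n$, so Borel-Cantelli makes it vanish a.s. For the second term, taking expectations under $\mathbb{P}^{\mathbb{N}}_{\theta^c}$ and interchanging with the $\pi$-integral (by Fubini) converts each likelihood ratio integral back into $\E_\theta[1-\phi_n]$, giving $\E_{\theta^c}[(1-\phi_n)N_n] \le \sup_{\theta\in V^c}\E_\theta[1-\phi_n] \le e^{-cn}$, so another Borel-Cantelli application yields $(1-\phi_n)N_n \le e^{-cn/2}$ eventually a.s. Choosing $\epsilon < c/2$ then forces $N_n/D_n \to 0$ a.s.

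The main technical obstacle is the denominator bound: the Jensen-plus-SLLN step requires exchanging a $\pi$-integral over $W$ with a pathwise limit of sample averages of $\log(p_\theta/p_{\theta^c})$, and this demands a uniform integrability condition that is not spelled out explicitly in Assumption 3.1. Filling this gap is the delicate part (and the reason the original statement is credited to \cite{schwartz1965bayes}); the test-based numerator bound, by contrast, reduces cleanly to two Borel-Cantelli arguments once the tests are in hand.
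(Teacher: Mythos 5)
The paper does not actually prove \cref{lemma3.1}; it is quoted from Schwartz's Theorem 6.1, so there is no in-paper argument to compare against. Your reconstruction is precisely Schwartz's original strategy --- the Bayes ratio $N_n/D_n$, a sub-exponential lower bound on the denominator from the Kullback--Leibler support condition (iv), and an exponential upper bound on the numerator from the tests in (iii) via two Borel--Cantelli applications. The numerator half is handled correctly, modulo one elision: \cref{assump3.1}(iii) only posits \emph{uniformly consistent} tests ($\E_{\theta^c}[\phi_n]\to 0$ and $\sup_{\theta\in V^c}\E_\theta[1-\phi_n]\to 0$); the exponential decay you assume is obtained in \cite{schwartz1965bayes} by a separate amplification step (grouping observations into blocks, taking majority votes, and invoking a Hoeffding-type bound), which you should either cite or reproduce.

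The gap you flag in the denominator bound is real for the route you chose, but it is closable without any extra uniform-integrability hypothesis, and the fix is the one Schwartz actually uses: do not apply Jensen and then try to pass the pathwise limit through the $\pi$-integral. Instead fix $\epsilon>0$, take $W\subseteq V$ as in (iv) with $\pi(W)>0$ and $D_{KL}(p_{\theta^c}\|p_\theta)<\epsilon$ on $W$, and consider $e^{2n\epsilon}D_n \ge \int_W \exp\bigl\{2n\epsilon + \sum_{i=1}^n \log\bigl(p_\theta(\xi_i)/p_{\theta^c}(\xi_i)\bigr)\bigr\}\,\pi(d\theta)$. For each fixed $\theta\in W$ the summand $\log(p_\theta/p_{\theta^c})$ is $\mathbb{P}_{\theta^c}$-integrable (its positive part is dominated by $p_\theta/p_{\theta^c}$ via $\log x\le x-1$, and its mean is $-D_{KL}(p_{\theta^c}\|p_\theta)$ with $D_{KL}<\epsilon<\infty$), so the SLLN gives $n^{-1}\sum_{i=1}^n\log\bigl(p_\theta(\xi_i)/p_{\theta^c}(\xi_i)\bigr)\to -D_{KL}(p_{\theta^c}\|p_\theta)>-2\epsilon$ a.s.; by Fubini this holds for $\pi$-a.e.\ $\theta\in W$ on a single full-measure event. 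The integrand is nonnegative and tends pointwise to $+\infty$, so Fatou's lemma yields $\liminf_n e^{2n\epsilon}D_n=\infty$, i.e.\ $D_n\ge e^{-2n\epsilon}$ eventually a.s., which is all the ratio argument needs. With that substitution (and the test-amplification step) your proof is complete and coincides with the cited source.
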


\subsection{Consistency of objective functions} \label{sec3.1}
The pointwise weak consistency of BRO problems' objectives has been shown in \cite{zhou2015simulation}. We strengthen this result by proving the pointwise strong consistency of BRO problems' objectives, where the proof technique differs from that in \cite{zhou2015simulation}. Moreover, our result is essential to establishing the consistency of optimal solutions.

\begin{definition}[Weak convergence] \label{def3.2}
A sequence of random variables $\{X_n\}$ is said to converge weakly (or in distribution) to $X$, denoted by $X_n \Rightarrow X$, if and only if $\E[g(X_n)] \rightarrow \E[g(X)]$ as $n \rightarrow \infty$ for all $g$ bounded and continuous. Similarly, a sequence of distributions $\bar{\mathbb{P}}_n\Rightarrow \bar{P}$ if and only if $\int g(\omega) \bar{\mathbb{P}}_n(d\omega) \rightarrow  \int g(\omega) \bar{\mathbb{P}}(d\omega)$ as $n \rightarrow \infty$ for all $g$ bounded and continuous.
\end{definition}

\begin{lemma} \label{lemma3.3}
Suppose \cref{assump3.1} holds. Then $\mathbb{P}_n \Rightarrow \delta_{\theta^c}$ a.s. $(\mathbb{P}^{\N}_{\theta^c})$, where $\delta_{\theta^c}$ is a point mass on $\theta^c$.
\end{lemma}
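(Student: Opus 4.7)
The plan is to reduce weak convergence $\mathbb{P}_n \Rightarrow \delta_{\theta^c}$ to the statement already established in \cref{lemma3.1}, namely that $\mathbb{P}_n(V) \to 1$ for every neighborhood $V$ of $\theta^c$. The natural tool is the Portmanteau characterization: $\mathbb{P}_n \Rightarrow \delta_{\theta^c}$ is equivalent to $\liminf_n \mathbb{P}_n(G) \ge \delta_{\theta^c}(G)$ for every open $G \subseteq \Theta$. If $\theta^c \notin G$ the inequality is trivial, so only open $G$ containing $\theta^c$ need to be considered, and for these the statement reduces to $\mathbb{P}_n(G) \to 1$, which is exactly the content of \cref{lemma3.1}.

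The subtle point, and what I expect to be the main obstacle, is the order of quantifiers hidden in the a.s. convergence. \Cref{lemma3.1} produces, for each fixed neighborhood $V$, a $\mathbb{P}^{\N}_{\theta^c}$-null set $N_V$ off of which $\mathbb{P}_n(V) \to 1$. To conclude weak convergence a.s., however, I need a single null set outside of which $\mathbb{P}_n(G) \to 1$ holds simultaneously for all open $G \ni \theta^c$. The way to overcome this is second countability: since $\Theta \subseteq \R^l$, there is a countable base $\{V_k\}_{k \in \N}$ of open neighborhoods of $\theta^c$ (for instance, open balls of rational radii centered at $\theta^c$ intersected with $\Theta$). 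Applying \cref{lemma3.1} to each $V_k$ yields null sets $N_k$, and the countable union $N := \bigcup_k N_k$ is again $\mathbb{P}^{\N}_{\theta^c}$-null.

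On the complement of $N$ I have $\mathbb{P}_n(V_k) \to 1$ for every $k$. For an arbitrary open $G$ with $\theta^c \in G$, pick a basic neighborhood $V_k$ with $\theta^c \in V_k \subseteq G$; then $\mathbb{P}_n(G) \ge \mathbb{P}_n(V_k) \to 1$, so $\liminf_n \mathbb{P}_n(G) \ge 1 = \delta_{\theta^c}(G)$. The Portmanteau direction then yields $\mathbb{P}_n \Rightarrow \delta_{\theta^c}$ off of $N$. Equivalently, and perhaps cleaner to present directly from \cref{def3.2}, one can argue with an arbitrary bounded continuous $g$: fix $\epsilon>0$, use continuity of $g$ at $\theta^c$ to choose $V_k$ with $|g(\theta)-g(\theta^c)|<\epsilon$ on $V_k$, and split
$$\left|\int g\, d\mathbb{P}_n - g(\theta^c)\right| \le \epsilon\, \mathbb{P}_n(V_k) + 2\|g\|_\infty\, \mathbb{P}_n(V_k^c),$$
whose $\limsup$ is at most $\epsilon$ off of $N$; sending $\epsilon \downarrow 0$ gives the result. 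Either route completes the proof, and no further assumption beyond \cref{assump3.1} is needed because the whole argument rides on \cref{lemma3.1}.
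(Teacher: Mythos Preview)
Your proposal is correct and follows essentially the same approach as the paper: the paper also takes a countable family of shrinking neighborhoods of $\theta^c$ (balls of radius $1/m$), intersects the full-measure events from \cref{lemma3.1}, and then verifies $\int g\,d\mathbb{P}_n \to g(\theta^c)$ for bounded continuous $g$ via a sandwich argument equivalent to your $\epsilon$-splitting. The only differences are cosmetic (radii $1/m$ versus rational radii, and the paper does not invoke Portmanteau by name).
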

\begin{proof}
Let $\Theta_m \subseteq \Theta$ be an open ball centered at $\theta^c$ with radius $1/m$. \cref{lemma3.1} ensures that for each $\Theta_m$ there exists an event $\Omega_m \in \mathcal{B}_{\Xi}^{\N}$ with $\mathbb{P}^{\N}_{\theta^c}(\Omega_m) = 1$ such that $\mathbb{P}_n(\Theta_m) \rightarrow 1$ as $n\rightarrow \infty$ on $\Omega_m$. Define $\tilde{\Omega} : = \cap_{m=1}^\infty \Omega_m$, then $\mathbb{P}^{\N}_{\theta^c}(\tilde{\Omega}) = 1$ and it suffices to show $\mathbb{P}_n \Rightarrow \delta_{\theta^c}$ on $\tilde{\Omega}$. Take a sample path $\omega \in \tilde{\Omega}$. Notice that for any bounded and continuous function $g$ and a fixed positive integer $k$, $\mathbb{P}_n(\Theta \setminus \Theta_k) \rightarrow 0$ and thus $ \int_{\Theta \setminus \Theta_k} g(\theta) \mathbb{P}_n(d\theta)  \rightarrow 0$ as $n \rightarrow \infty$. It follows that
\begin{equation*}
\inf_{\theta \in \Theta_k} g(\theta) \le \liminf_{n \rightarrow \infty} \int_{\Theta} g(\theta) \mathbb{P}_n(d\theta) \le \limsup_{n \rightarrow \infty} \int_{\Theta} g(\theta) \mathbb{P}_n(d\theta) \le \sup_{\theta \in \Theta_k} g(\theta).
\end{equation*}
Letting $k \rightarrow \infty$, the continuity of $g$ and \cref{def3.2} implies that $\mathbb{P}_n \Rightarrow \delta_{\theta^c}$.
\end{proof}

\begin{theorem} \label{thm3.4}
Suppose \cref{assump3.1} holds, and $H(x, \cdot)$ is continuous on $\Theta$ for every $x \in \mathcal{X}$. Then for every fixed $x\in \mathcal{X}$, we have $\rho_{\, \mathbb{P}_n}[H(x,\theta)] \rightarrow H(x,\theta^c)$ as $n \rightarrow \infty$ a.s. $(\mathbb{P}^{\N}_{\theta^c})$ for all four choices of $\rho$.
\end{theorem}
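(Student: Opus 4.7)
The plan is to leverage \cref{lemma3.3} ($\mathbb{P}_n \Rightarrow \delta_{\theta^c}$ almost surely) together with the compactness of $\Theta$ and continuity of $H(x,\cdot)$. Fix $x \in \mathcal{X}$ and a sample path in the almost sure event on which $\mathbb{P}_n \Rightarrow \delta_{\theta^c}$. Since $H(x,\cdot)$ is continuous on the compact $\Theta$, it is bounded by some constant $M_x < \infty$, and the continuous mapping theorem gives that the pushforward distribution of $H(x,\theta)$ under $\theta \sim \mathbb{P}_n$ converges weakly to the degenerate distribution $\delta_{H(x,\theta^c)}$.

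For the mean, I would apply the definition of weak convergence directly: $H(x,\cdot)$ is bounded and continuous, so $\E_{\mathbb{P}_n}[H(x,\theta)] = \int H(x,\theta)\,\mathbb{P}_n(d\theta) \to H(x,\theta^c)$. For the variance, I would apply the same argument to $H(x,\cdot)^2$ to get $\E_{\mathbb{P}_n}[H(x,\theta)^2] \to H(x,\theta^c)^2$, and then combine: $\mathrm{Var}_{\mathbb{P}_n}[H(x,\theta)] \to H(x,\theta^c)^2 - H(x,\theta^c)^2 = 0$. Thus the mean-variance objective converges to $H(x,\theta^c)$ for any fixed weight $w \ge 0$.

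For VaR at level $\alpha \in (0,1)$, the limit distribution $\delta_{H(x,\theta^c)}$ has a CDF that jumps from $0$ to $1$ at $H(x,\theta^c)$ and is continuous everywhere else, so every point $t \ne H(x,\theta^c)$ is a continuity point of the limit CDF. Given $\epsilon > 0$, weak convergence at the continuity points $H(x,\theta^c) \pm \epsilon$ shows that the pushforward CDF at $H(x,\theta^c) - \epsilon$ tends to $0$ and at $H(x,\theta^c) + \epsilon$ tends to $1$, so for large $n$ the $\alpha$-quantile lies in $[H(x,\theta^c)-\epsilon, H(x,\theta^c)+\epsilon]$. Hence $\mathrm{VaR}^\alpha_{\mathbb{P}_n}[H(x,\theta)] \to H(x,\theta^c)$. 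For CVaR, I would use the integral representation $\mathrm{CVaR}^\alpha(X) = \frac{1}{1-\alpha}\int_\alpha^1 \mathrm{VaR}^r(X)\,dr$; the VaR argument applies at every level $r \in (\alpha,1)$, and because $|\mathrm{VaR}^r_{\mathbb{P}_n}[H(x,\theta)]| \le M_x$ uniformly in $n$ and $r$, the dominated convergence theorem yields convergence of CVaR to $H(x,\theta^c)$.

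The only delicate point is the VaR case, since the VaR functional is generally not continuous with respect to weak convergence; however, the degeneracy of the limit distribution side-steps the usual discontinuity, so the main obstacle is really just writing the quantile sandwich carefully. All the other steps are direct applications of the definition of weak convergence combined with the uniform boundedness supplied by compactness of $\Theta$.
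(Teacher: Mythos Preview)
Your proposal is correct and follows essentially the same route as the paper for the mean, mean-variance, and VaR cases: invoke \cref{lemma3.3}, use boundedness of $H(x,\cdot)$ on the compact $\Theta$, and apply the definition of weak convergence (for mean/variance) or the continuity-point sandwich (for VaR). The only substantive difference is in the CVaR step. The paper instead uses translation invariance and monotonicity of CVaR to bound $|\text{CVaR}^\alpha_{\mathbb{P}_n}[H(\theta)] - H(\theta^c)| \le \frac{1}{1-\alpha}\E_{\mathbb{P}_n}[|H(\theta)-H(\theta^c)|]$ and then appeals to weak convergence of $\mathbb{P}_n$. Your argument via the integral representation $\text{CVaR}^\alpha = \frac{1}{1-\alpha}\int_\alpha^1 \text{VaR}^r\,dr$ together with dominated convergence (using the uniform bound $|\text{VaR}^r_{\mathbb{P}_n}| \le M_x$) is equally valid and has the advantage of recycling the VaR result you just proved, whereas the paper's approach highlights the coherent-risk-measure structure and avoids revisiting quantiles at every level $r$.
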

\begin{proof}
Suppress $x$ and write $H(x, \theta)$ as $H(\theta)$ for short. We will focus on the same event $\tilde{\Omega}$ constructed in the proof of \cref{lemma3.3}. Take a sample path $\omega \in \tilde{\Omega}$. The consistency for each choice of $\rho$ is shown as follows.

{\bf{Mean.}} The compactness of $\Theta$ and the continuity of $H$ implies that $H$ is bounded on $\Theta$. It follows directly from \cref{def3.2} that $\E_{\mathbb{P}_n}[H(\theta)] \rightarrow H(\theta^c)$.

{\bf{Mean-variance.}} Since $\text{Var}_{\mathbb{P}_n} [H(\theta)]  = \E_{\mathbb{P}_n}\{[H(\theta)]^2\} - \{\E_{\mathbb{P}_n}[H(\theta)]\}^2$, where $H^2$ and $H$ are bounded and continuous functions, it follows from \cref{def3.2} that $\text{Var}_{\mathbb{P}_n} [H(\theta)] \rightarrow [H(\theta^c)]^2 -  [H(\theta^c)]^2 = 0$.

{\bf{VaR.}} Let $\mathbb{P}^n_H := \mathbb{P}_n \circ H^{-1}$ be the distribution of $H(\theta)$ induced by $\mathbb{P}_n$. Then $\mathbb{P}^n_H \Rightarrow \delta_{H(\theta^c)}$ by Continuous Mapping Theorem (see, e.g., \cite{durrett2010probability}, Theorem 3.2.4). Since $H(\theta^c) - \epsilon$ and $H(\theta^c) + \epsilon$ are continuity points of $\delta_{H(\theta^c)}$, we have for any $\epsilon > 0$ that 
\begin{equation*}
\mathbb{P}^n_H\left(H(\theta) \le H(\theta^c) - \epsilon\right) \rightarrow 0 \le \alpha, \qquad \mathbb{P}^n_H\left(H(\theta) \le H(\theta^c) + \epsilon\right) \rightarrow 1 \ge \alpha,
\end{equation*}
which implies that $H(\theta^c) - \epsilon \le \text{VaR}^{\alpha}_{\mathbb{P}_n}[H(\theta)] \le H(\theta^c) + \epsilon$ for all $n$ sufficiently large. The convergence follows from that $\epsilon$ can be chosen arbitrarily small.

{\bf{CVaR.}} By CVaR's translation invariance and monotonicity,
\begin{align*}
\big|\text{CVaR}^\alpha_{\mathbb{P}_n}[H(\theta)] - H(\theta^c)\big| &\le \text{CVaR}^\alpha_{\mathbb{P}_n} \left[\big|H(\theta) - H(\theta^c) \big| \right]\\
& = \frac{1}{1-\alpha} \E_{\mathbb{P}_n}[\big|H(\theta) - H(\theta^c) \big| \mathbbm{1}_{\{{|H(\theta) - H(\theta^c) | \ge v_{\alpha}}\}}]\\
&\le \frac{1}{1-\alpha} \E_{\mathbb{P}_n}[\big|H(\theta) - H(\theta^c) \big|],
\end{align*}
where $\mathbbm{1}(\cdot)$ is an indicator function and $v_{\alpha} := \text{VaR}^{\alpha}_{\mathbb{P}_n}[|H(\theta) - H(\theta^c)|]$. The proof is complete by noting that $|H(\cdot) - H(\theta^c)|$ is bounded and continuous on $\Theta$.
\end{proof}

\subsection{Consistency of optimal solutions} \label{sec3.2}
Let $S_n : = \arg\min_{x \in \mathcal{X}} \rho_{\, \mathbb{P}_n} [H(x, \theta)]$ be the set of optimal solutions of a BRO problem, and let $S: = \arg\min_{x\in \mathcal{X}} H(x, \theta)$ be the set of true optimal solutions. We consider the following deviation between $S_n$ and $S$.
\begin{definition} \label{def3.5}
For $A, B \subseteq \mathcal{X}$, define $\mathbb{D}(A,B) := \sup_{x \in A} \text{dist}(x, B)$, where $\text{dist}(x, B) : = \inf_{y \in B} \|x - y\|$ and $\|\cdot\|$ denotes an arbitrary norm.
\end{definition}

The Hausdorff metric is defined as $\max\{\mathbb{D}(A,B), \mathbb{D}(B,A)\}$, but it suffices for us to consider $\mathbb{D}$.  We will assume that $\mathcal{X}$ is compact, which is not a strong assumption since the optimal solutions are often contained in a compact set. 
\begin{assumption}[Sufficient conditions for consistency under $\mu$] \label{assump3.2}
\quad
\begin{enumerate}
\item[(i)]
$\{\Xi, \mathcal{B}_{\Xi}\}$ and $\{\Theta, \mathcal{B}_{\Theta}\}$ are both isomorphic to Borel sets in a complete separable space.
\item[(ii)]
If $\theta_1 \neq \theta_2$, then there exists a set $A\in \mathcal{B}^{\N}_{\Xi}$ for which $\mathbb{P}_{\theta_1}(A) \neq \mathbb{P}_{\theta_2}(A)$.
\end{enumerate}
\end{assumption}

By Doob's Consistency Theorem \cite{doob1949application}, \cref{assump3.2} implies that for any neighborhood $V \in \mathcal{B}_{\Theta}$ of $\theta^c$, $\mathbb{P}_n(V) \rightarrow 1$ $\text{a.s.}$ $(\mu)$ as $n \rightarrow \infty$, which is weaker than \cref{lemma3.1} (see \cref{remark1} for a comparison between $\mu$ and $\mathbb{P}^{\N}_{\theta^c}$) because \cref{assump3.2} is significantly less stringent than \cref{assump3.1}. However, notice that working with measure $\mu$ allows an expression of posterior mean as a conditional expectation, where Martingale Convergence Theorem can be applied. The following lemmas will be useful in showing that $\mathbb{D}(S_n, S) \rightarrow 0$ a.s. ($\mu$) as $n \rightarrow \infty$.

\begin{lemma}[\cite{shapiro2014lectures}, Theorem 5.3]  \label{lemma3.6}
Let $\mathcal{X}$ be a compact subset of $\R^d$. Suppose a sequence of continuous functions $\{f_n\} : \mathcal{X} \rightarrow \R$ converges uniformly to a continuous function $f$. Let $\bar{S}_n := \arg\min_{x \in \mathcal{X}} f_n(x)$ and $\bar{S}:=\arg\min_{x \in \mathcal{X}} f(x)$. Then $\mathbb{D}(\bar{S}_n, \bar{S}) \rightarrow 0$ as $n \rightarrow \infty$. Furthermore, we have $f^*_n \rightarrow f^*$, where $f^*_n := \min_{x \in \mathcal{X}} f_n(x)$ and $f^*: = \min_{x \in \mathcal{X}} f(x)$.
\end{lemma}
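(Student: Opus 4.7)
The plan is to split the lemma into two claims that reinforce each other: first show $f^*_n \to f^*$, then deduce $\mathbb{D}(\bar{S}_n, \bar{S}) \to 0$. Both parts hinge on the single quantity $\epsilon_n := \sup_{x \in \mathcal{X}} |f_n(x) - f(x)|$, which tends to $0$ by hypothesis, together with compactness of $\mathcal{X}$ and continuity of $f_n, f$ (which, by Weierstrass, guarantee that $\bar{S}_n$ and $\bar{S}$ are nonempty compact subsets of $\mathcal{X}$).

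For convergence of optimal values, I would sandwich $f^*_n$ between $f^* \pm \epsilon_n$. Picking any $\bar{x} \in \bar{S}$ yields $f^*_n \le f_n(\bar{x}) \le f(\bar{x}) + \epsilon_n = f^* + \epsilon_n$; conversely, $f_n(x) \ge f(x) - \epsilon_n \ge f^* - \epsilon_n$ for every $x \in \mathcal{X}$, so taking the infimum gives $f^*_n \ge f^* - \epsilon_n$. Hence $|f^*_n - f^*| \le \epsilon_n \to 0$.

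For the solution-set deviation I would argue by contradiction. Suppose there exist $\delta > 0$, a subsequence $\{n_k\}$, and points $x_{n_k} \in \bar{S}_{n_k}$ with $\text{dist}(x_{n_k}, \bar{S}) \ge \delta$. By compactness of $\mathcal{X}$, a further subsequence $x_{n_{k_j}}$ converges to some $x^\dagger \in \mathcal{X}$. Since $\text{dist}(\cdot, \bar{S})$ is $1$-Lipschitz (hence continuous), $\text{dist}(x^\dagger, \bar{S}) \ge \delta > 0$, so $x^\dagger \notin \bar{S}$. On the other hand, uniform convergence gives $|f_{n_{k_j}}(x_{n_{k_j}}) - f(x_{n_{k_j}})| \le \epsilon_{n_{k_j}} \to 0$, while continuity of $f$ gives $f(x_{n_{k_j}}) \to f(x^\dagger)$; chaining these, $f^*_{n_{k_j}} = f_{n_{k_j}}(x_{n_{k_j}}) \to f(x^\dagger)$. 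Combined with $f^*_{n_{k_j}} \to f^*$ from the first step, this forces $f(x^\dagger) = f^*$, i.e., $x^\dagger \in \bar{S}$, contradicting $\text{dist}(x^\dagger, \bar{S}) \ge \delta$.

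The main subtlety, rather than a genuine obstacle, is recognizing why only the one-sided deviation $\mathbb{D}(\bar{S}_n, \bar{S})$ appears in the statement rather than full Hausdorff convergence: $\bar{S}$ may strictly contain every accumulation point of $\bar{S}_n$, for instance when $f$ has a plateau of minimizers while each $f_n$ perturbs the problem just enough to select one point. The compactness-plus-continuity argument above establishes that every limit point of $\bar{S}_n$ lies in $\bar{S}$, which is exactly the content of $\mathbb{D}(\bar{S}_n, \bar{S}) \to 0$, and this is all that is needed for the downstream application to consistency of BRO optimal solutions.
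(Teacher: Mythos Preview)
Your proof is correct and follows the standard route for this classical result. Note, however, that the paper does not supply its own proof of Lemma~3.6: the lemma is stated with a citation to \cite{shapiro2014lectures}, Theorem~5.3, and is used as a black box in the proof of Theorem~3.9. Thus there is no paper-side argument to compare against; your sandwich estimate for $|f_n^* - f^*|$ and your compactness/subsequence contradiction for $\mathbb{D}(\bar S_n, \bar S) \to 0$ constitute exactly the textbook proof that the cited reference contains.
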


\begin{lemma}[\cite{thomson2008elementary}, Exercise 9.4.10] \label{lemma3.7}
Let $\mathcal{X}$ be a compact subset of $\R^d$. If $\{f_n\} : \mathcal{X} \rightarrow \R$ is a sequence of functions converging pointwise to a function $f$, and there exists a common Lipschitz constant $L>0$ for $\{f_n\}$ and $f$, then $f_n \rightarrow f$ uniformly.
\end{lemma}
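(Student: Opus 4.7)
The plan is to carry out the classical $\epsilon/3$ argument, using compactness to reduce pointwise convergence to convergence at a finite set of points and using the common Lipschitz constant to control the error at other points.

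First I would fix $\epsilon > 0$ and choose a finite $(\epsilon/(3L))$-net of $\mathcal{X}$. Because $\mathcal{X}$ is compact (and hence totally bounded), there exist finitely many points $x_1, \ldots, x_K \in \mathcal{X}$ such that every $x \in \mathcal{X}$ satisfies $\|x - x_i\| < \epsilon/(3L)$ for some $i$. Next, since $f_n(x_i) \to f(x_i)$ for each $i$ by pointwise convergence, I would pick $N$ large enough so that $|f_n(x_i) - f(x_i)| < \epsilon/3$ for all $n \ge N$ and all $i = 1, \ldots, K$ simultaneously (taking the maximum over finitely many $N_i$'s).

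Then, for any $x \in \mathcal{X}$ and any $n \ge N$, I would select an $x_i$ within distance $\epsilon/(3L)$ of $x$ and apply the triangle inequality:
\begin{equation*}
|f_n(x) - f(x)| \le |f_n(x) - f_n(x_i)| + |f_n(x_i) - f(x_i)| + |f(x_i) - f(x)| \le L \cdot \tfrac{\epsilon}{3L} + \tfrac{\epsilon}{3} + L \cdot \tfrac{\epsilon}{3L} = \epsilon,
\end{equation*}
where the first and third terms use the common Lipschitz constant $L$ for $f_n$ and $f$ respectively, and the middle term uses the pointwise control at the net. Since the bound $\epsilon$ is independent of $x$, taking $\sup_{x \in \mathcal{X}} |f_n(x) - f(x)| \le \epsilon$ yields uniform convergence.

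There is essentially no hard step here: the argument is the standard equicontinuity-style reduction, and the common Lipschitz constant $L$ already supplies uniform equicontinuity of the family $\{f_n\} \cup \{f\}$, making this a textbook application of the Arzel\`a--Ascoli philosophy. The only thing worth being careful about is ensuring that the finite net and the threshold $N$ are chosen independently of $x$ — which they are, since $K$ is finite and the pointwise convergence is applied only at the $K$ net points.
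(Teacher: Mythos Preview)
Your proof is correct and is exactly the standard $\epsilon/3$ argument one would expect. The paper does not actually give its own proof of this lemma: it simply cites it as an exercise from \cite{thomson2008elementary}, so there is nothing to compare against beyond noting that your argument is the intended textbook solution.
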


\begin{lemma} \label{lemma3.8}
Let $X, Y$ be two random variables in $\mathcal{L}^\infty(\Omega, \mathcal{F}, \mathbb{P})$, i.e., the space of all essentially bounded random variables. For $\rho$ a risk functional with monotonicity and translation invariance, $|\rho(X) - \rho(Y)| \le \|X-Y\|_{\infty}$, where $\|\cdot\|_{\infty}$ is the $L^{\infty}$ norm. If furthermore $\rho$ is a coherent risk measure, then $|\rho(X) - \rho(Y)| \le \rho(|X-Y|)$.
\end{lemma}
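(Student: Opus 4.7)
The plan is to exploit the defining axioms of $\rho$ through simple almost-sure pointwise bounds on $X$ and $Y$. For the first inequality, I would set $d := \|X - Y\|_\infty$ and note that the essential-sup bound translates into the two-sided sandwich $Y - d \le X \le Y + d$ almost surely. Applying monotonicity to both sides yields $\rho(Y - d) \le \rho(X) \le \rho(Y + d)$, and then translation invariance collapses these to $\rho(Y) - d \le \rho(X) \le \rho(Y) + d$, which rearranges to $|\rho(X) - \rho(Y)| \le d = \|X - Y\|_\infty$.

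For the second inequality I would bring in the additional subadditivity axiom that comes with coherence. Decomposing $X = (X - Y) + Y$ and applying subadditivity gives $\rho(X) \le \rho(X - Y) + \rho(Y)$, hence $\rho(X) - \rho(Y) \le \rho(X - Y)$. Since $X - Y \le |X - Y|$ almost surely, monotonicity then yields $\rho(X - Y) \le \rho(|X - Y|)$. Swapping the roles of $X$ and $Y$ produces the matching bound $\rho(Y) - \rho(X) \le \rho(|X - Y|)$, and combining the two gives the desired inequality.

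The arguments are one-line manipulations once the axioms are in hand, so the main obstacle is really just bookkeeping: one must use translation invariance with the sign convention $\rho(X + c) = \rho(X) + c$ appropriate to the loss-based setting of the paper (consistent with the VaR and CVaR definitions around \cref{eq2.5}), and one must confirm that $X - Y$ and $|X - Y|$ remain in $\mathcal{L}^\infty(\Omega, \mathcal{F}, \mathbb{P})$ so that $\rho$ may legitimately be evaluated on them, which is immediate from $X, Y \in \mathcal{L}^\infty$.
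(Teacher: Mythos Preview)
Your proposal is correct and matches the paper's approach: the paper defers the first part to a reference (the standard sandwich-plus-translation argument you wrote out) and proves the second part exactly as you do, via subadditivity on $\rho(Y)+\rho(X-Y)\ge\rho(X)$ followed by monotonicity and symmetry.
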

\begin{proof}
See Lemma 4.3 in \cite{hans2002stochastic} for proof of the first part. For the second part, by subadditivity $\rho(Y) + \rho(X-Y) \ge \rho(X)$, and by monotonicity $\rho(X) - \rho(Y) \le \rho(X-Y) \le \rho(|X-Y|)$. The result follows by symmetry.
\end{proof}

\begin{theorem} \label{thm3.9}
Suppose that \cref{assump3.2} holds, $\Theta$ and $\mathcal{X}$ are compact, and $H(x,\cdot)$ is continuous on $\Theta$ for every $x \in \mathcal{X}$. Then, $\mathbb{D}(S_n, S) \rightarrow 0$ a.s. $(\mu)$ as $n\rightarrow \infty$ if (i) for mean and CVaR, there exists a measurable function $\kappa: \Theta \rightarrow \R^+$ with $|H(x, \theta) - H(y, \theta)| \le \kappa(\theta) \|x-y\|, \forall x,y \in \mathcal{X}$ and $\int_{\Theta} \kappa(\theta) \pi(d\theta) < \infty$; (ii) for mean-variance, $H$ is jointly continuous on $\mathcal{X} \times \Theta$; (iii) for VaR, (i) holds with $\|\kappa\|_\infty < \infty$.
\end{theorem}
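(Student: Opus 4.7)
The plan is to reduce the set-convergence statement to uniform convergence of objectives and then apply \cref{lemma3.6}: once $f_n(x) := \rho_{\mathbb{P}_n}[H(x,\theta)]$ converges uniformly on the compact set $\mathcal{X}$ to $f(x) := H(x,\theta^c)$ a.s.~$(\mu)$, the conclusion $\mathbb{D}(S_n,S)\to 0$ follows at once. Under \cref{assump3.2}, Doob's Consistency Theorem yields $\mathbb{P}_n\Rightarrow \delta_{\theta^c}$ a.s.~$(\mu)$, so the argument of \cref{thm3.4} carries through verbatim and produces pointwise convergence $f_n(x)\to f(x)$ a.s.~$(\mu)$ for each fixed $x$. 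The remaining work is to upgrade pointwise to uniform convergence in each of the four cases.

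For cases (i) and (iii) I will invoke \cref{lemma3.7}, whose hypothesis is a common Lipschitz constant for $\{f_n\}$ and $f$. \cref{lemma3.8} supplies the key bounds: for VaR the first part gives $|f_n(x) - f_n(y)| \le \|H(x,\cdot) - H(y,\cdot)\|_\infty \le \|\kappa\|_\infty\|x-y\|$, a deterministic constant under hypothesis (iii); for the mean objective, $|f_n(x) - f_n(y)| \le \E_{\mathbb{P}_n}[\kappa(\theta)]\|x-y\|$; and for CVaR, the coherent-case bound in \cref{lemma3.8}, positive homogeneity, and the elementary inequality $\text{CVaR}^\alpha(Z) \le (1-\alpha)^{-1}\E[Z]$ for nonnegative $Z$ combine to give $|f_n(x) - f_n(y)| \le (1-\alpha)^{-1}\E_{\mathbb{P}_n}[\kappa(\theta)]\|x-y\|$. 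The limit $f = H(\cdot,\theta^c)$ is itself Lipschitz with constant $\kappa(\theta^c)$, which is finite a.s.~$(\mu)$ because $\int_{\Theta}\kappa\,d\pi<\infty$.

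The main obstacle is that for mean and CVaR the random Lipschitz constant $L_n := \E_{\mathbb{P}_n}[\kappa(\theta)]$ is not a priori bounded in $n$. The key insight is that $L_n = \E_\mu[\kappa(\tilde{\theta})\mid\mathcal{F}_n]$ is a Doob martingale with $\E_\mu[\kappa(\tilde{\theta})] = \int_\Theta \kappa\,d\pi<\infty$, so Levy's Upward Theorem delivers a.s.~$(\mu)$ convergence of $L_n$ and hence $\sup_n L_n<\infty$ on a full-measure event. This furnishes the common (random but finite) Lipschitz constant required by \cref{lemma3.7} on each such sample path, yielding uniform convergence of $f_n$ in cases (i) and (iii).

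Case (ii), mean-variance, must be treated separately because only joint continuity of $H$ is assumed. Joint continuity on the compact set $\mathcal{X}\times\Theta$ gives $\sup_{(x,\theta)}|H(x,\theta)| =: M<\infty$ and uniform continuity, so $H(\cdot,\theta)$ has a deterministic modulus of continuity in $x$ independent of $\theta$. Using $|H^2(x,\theta) - H^2(y,\theta)| \le 2M|H(x,\theta) - H(y,\theta)|$ together with the triangle inequality for $(\E H)^2$, both $\E_{\mathbb{P}_n}[H(x,\theta)]$ and $\text{Var}_{\mathbb{P}_n}[H(x,\theta)]$ inherit a deterministic modulus of continuity, making $\{f_n\}$ equicontinuous on $\mathcal{X}$. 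Equicontinuity together with pointwise convergence on a countable dense subset of $\mathcal{X}$ then yields uniform convergence by a standard Arzela-Ascoli argument. In all four cases \cref{lemma3.6} completes the proof.
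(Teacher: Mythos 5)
Your proposal is correct and, for the mean, VaR, and CVaR cases, follows essentially the same route as the paper: pointwise convergence inherited from the argument of \cref{thm3.4} under Doob consistency, a common Lipschitz constant obtained from $\|\kappa\|_\infty$ (VaR) or from the Doob martingale $\E_\mu[\kappa(\tilde\theta)\mid\mathcal{F}_n]$ together with the Martingale Convergence Theorem (mean, CVaR), then \cref{lemma3.7} and \cref{lemma3.6}. The one genuine divergence is the mean--variance case. The paper bounds $|\E_{\mathbb{P}_n}[H(x,\theta)^2]-\E_{\mathbb{P}_n}[H(y,\theta)^2]|$ and $|(\E_{\mathbb{P}_n}H(x,\theta))^2-(\E_{\mathbb{P}_n}H(y,\theta))^2|$ by $2M$ times the corresponding mean increments and then says ``the rest follows from the case of mean,'' so its Lipschitz constant for the variance term is again $\E_\mu[\kappa(\tilde\theta)\mid\mathcal{F}_n]$ --- i.e., the paper's proof of case (ii) tacitly re-uses the Lipschitz hypothesis of case (i) in addition to joint continuity. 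You instead extract a deterministic modulus of continuity from the uniform continuity of $H$ on the compact set $\mathcal{X}\times\Theta$, obtain equicontinuity of $\{f_n\}$, and upgrade pointwise to uniform convergence by an Arzel\`a--Ascoli-type argument rather than by \cref{lemma3.7}. This is a valid alternative and in fact matches the literal hypothesis (ii) (joint continuity only) more faithfully than the paper's own reduction; the price is a slightly longer argument in place of a one-line appeal to \cref{lemma3.7}. Everything else in your write-up (the $\mathrm{CVaR}^\alpha(Z)\le(1-\alpha)^{-1}\E[Z]$ bound for $Z\ge 0$, the finiteness of $\kappa(\theta^c)$ a.s.\ from $\int_\Theta\kappa\,d\pi<\infty$) checks out.
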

\begin{proof}
The following argument is in the sense of a.s. ($\mu$). Similar to the proof of \cref{thm3.4}, it can be shown that $\rho_{\, \mathbb{P}_n}[H(\cdot, \theta)] \rightarrow H(\cdot, \theta^c)$ pointwise on $\mathcal{X}$ as $n \rightarrow \infty$. If we further show that $\rho_{\, \mathbb{P}_n}[H(\cdot, \theta)]$ has a common Lipschitz constant $L$ for all $n$, then \cref{lemma3.7} implies that $\rho_{\, \mathbb{P}_n}[H(\cdot, \theta)] \rightarrow H(\cdot, \theta^c)$ uniformly on $\mathcal{X}$, and $\mathbb{D}(S_n, S) \rightarrow 0$ is an immediate consequence of the first part of \cref{lemma3.6}.

{\bf Mean.} Recall from \cref{sec:2.1} that the posterior mean can be expressed as a conditional expectation. Thus, we have for all $x, y \in \mathcal{X}$,
\begin{align*}
&\big| \E_{\mathbb{P}_n}[H(x, \theta)] - \E_{\mathbb{P}_n}[H(y, \theta)] \big| \le  \E_\mu[\kappa(\theta) \mid \mathcal{F}_n] \|x-y\|.
\end{align*}
By assumption $\E_\mu[|\kappa(\theta)|] = \int \kappa(\theta) \pi(d\theta) < \infty$, so $\E_\mu[\kappa(\theta) \mid \mathcal{F}_n]$ is a Doob martingale and by Martingale Convergence Theorem (see, e.g., \cite{durrett2010probability}, Theorem 5.5.7),
$$\E_\mu[\kappa(\theta) \mid \mathcal{F}_n] \rightarrow \E_\mu[\kappa(\theta) \mid \mathcal{F}_\infty] \quad \text{as } n \rightarrow \infty,$$
where $\mathcal{F}_{\infty}:= \sigma\left(\cup_n \mathcal{F}_n \right)$. Since $\E_\mu\{\E_\mu[\kappa(\theta) \mid \mathcal{F}_{\infty}] \} = \E_\mu[\kappa(\theta)] < \infty$, $\E_\mu[\kappa(\theta) \mid \mathcal{F}_{\infty}]$ is a.s. finite, and there exists an $L:= \sup_{n} \E_\mu[\kappa(\theta) \mid \mathcal{F}_n] < \infty$.

{\bf Mean-variance.} It suffices to find an $L$ for $\text{Var}_{\mathbb{P}_n}[H(\cdot, \theta)]$. By definition,
\begin{equation} \label{eq3.1}
\text{Var}_{\mathbb{P}_n}[H(\cdot, \theta)] = \E_{\mathbb{P}_n}\left\{[H(\cdot, \theta)]^2\right\} - \left\{\E_{\mathbb{P}_n}[H(\cdot, \theta)] \right\}^2.
\end{equation}
Since $H$ is jointly continuous on $\mathcal{X} \times \Theta$, $|H| \le M$ for some $M \ge 0$. For the first term in the RHS of \cref{eq3.1},
\begin{align*}
\E_{\mathbb{P}_n}\left\{[H(x, \theta)]^2 - [H(y,\theta)]^2 \right\} &\le \E_{\mathbb{P}_n} \left\{ \big| H(x, \theta) + H(y, \theta) \big| \cdot  \big| H(x, \theta) - H(y, \theta) \big| \right\}\\
& \le 2M  \E_{\mathbb{P}_n} \left\{ \big| H(x, \theta) - H(y, \theta) \big| \right\}.
\end{align*}
Similarly, for the second term,
\begin{align*}
&\big| \left\{\E_{\mathbb{P}_n}[H(x, \theta)] \right\}^2 - \left\{\E_{\mathbb{P}_n}[H(y, \theta)] \right\}^2 \big|\\
 \le & \:\big| \E_{\mathbb{P}_n}[H(x, \theta)] + \E_{\mathbb{P}_n}[H(y, \theta)] \big| \cdot \big| \E_{\mathbb{P}_n}[H(x, \theta)] - \E_{\mathbb{P}_n}[H(y, \theta)] \big| \\
 \le & \: 2M \big| \E_{\mathbb{P}_n}[H(x, \theta)] - \E_{\mathbb{P}_n}[H(y, \theta)] \big|,
\end{align*}
and the rest follows from the case of mean.

{\bf VaR.} Since $\|\kappa\|_\infty < \infty$, there exists $L>0$ such that $\big| H(x, \theta) - H(y, \theta) \big| \le L \|x-y\|$ for all $x,y \in \mathcal{X}$ and $\theta \in \Theta$. By the first part of \cref{lemma3.8},
\begin{equation*}
\big| \text{VaR}^{\alpha}_{\mathbb{P}_n}[H(x, \theta)] - \text{VaR}^{\alpha}_{\mathbb{P}_n}[H(y, \theta)] \big|  \le \|H(x, \theta) - H(y, \theta)\|_{\infty} \le L\|x-y\|.
\end{equation*}

{\bf CVaR.} Using the second part of \cref{lemma3.8}, we have for any $x, y$ in $\mathcal{X}$,
\begin{equation*}
\big| \text{CVaR}^{\alpha}_{\mathbb{P}_n}[H(x, \theta)] - \text{CVaR}^{\alpha}_{\mathbb{P}_n}[H(y, \theta)]\big| \le \frac{1}{1-\alpha} \E_\mu[\kappa(\theta) \mid \mathcal{F}_n] \|x-y\|,
\end{equation*}
and the rest follows from the proof of the mean formulation.
\end{proof}

\begin{corollary}
For all four choices of $\rho$,
$$\min_{x \in \mathcal{X}} \rho_{\, \mathbb{P}_n}[H(x,\theta)] \rightarrow \min_{x\in \mathcal{X}} H(x, \theta^c) \quad \text{a.s. } (\mu).$$
\end{corollary}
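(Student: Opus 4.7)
The plan is to observe that this corollary is an immediate consequence of the second conclusion of Lemma 3.6 applied to the uniform convergence already established inside the proof of Theorem 3.9. Recall that Lemma 3.6 not only gives $\mathbb{D}(\bar{S}_n, \bar{S}) \to 0$ under uniform convergence of continuous functions on a compact set, but also asserts $f_n^* \to f^*$ for the minimal values. Since the proof of Theorem 3.9 has to build up exactly this uniform convergence (in order to apply the first half of Lemma 3.6), the optimal-value convergence comes essentially for free.

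Concretely, for each of the four choices of $\rho$ and under the corresponding hypotheses in Theorem 3.9, set $f_n(x) := \rho_{\, \mathbb{P}_n}[H(x,\theta)]$ and $f(x) := H(x, \theta^c)$. The proof of Theorem 3.9 verifies, a.s.\ ($\mu$), two things on the compact set $\mathcal{X}$: (a) pointwise convergence $f_n(x) \to f(x)$, obtained from Theorem 3.4 together with the observation (cf.\ Remark 3.1) that an a.s.\ ($\mathbb{P}^{\N}_{\theta^c}$) statement transfers to an a.s.\ ($\mu$) statement on the sample paths of interest; and (b) a common Lipschitz constant $L$ for $\{f_n\}$ and $f$, produced case-by-case via the Martingale Convergence Theorem (mean and CVaR), the boundedness of $H$ on $\mathcal{X} \times \Theta$ (mean-variance), or $\|\kappa\|_\infty < \infty$ (VaR). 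Lemma 3.7 then upgrades (a) and (b) to uniform convergence of $\{f_n\}$ to $f$ on $\mathcal{X}$.

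With continuous $f_n$'s converging uniformly to a continuous $f$ on the compact set $\mathcal{X}$, the second conclusion of Lemma 3.6 yields $f_n^* \to f^*$, i.e., $\min_{x\in\mathcal{X}} \rho_{\, \mathbb{P}_n}[H(x,\theta)] \to \min_{x\in\mathcal{X}} H(x,\theta^c)$ a.s.\ ($\mu$), which is exactly the claim. There is no real obstacle to overcome here: the substantive work is already done in Theorem 3.9, and this corollary simply records what Lemma 3.6 says about optimal values in addition to argmin sets. The only bookkeeping worth flagging is ensuring the good event (on which uniform convergence holds) is the same event on which the optimal-value convergence is asserted, but since both statements rely on the same pointwise plus common-Lipschitz argument, this is automatic.
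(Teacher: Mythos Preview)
Your proposal is correct and matches the paper's own proof exactly: the paper simply notes that uniform convergence of $\rho_{\,\mathbb{P}_n}[H(\cdot,\theta)]$ to $H(\cdot,\theta^c)$ was established inside the proof of Theorem~3.9 and then invokes the second part of Lemma~3.6. One small aside: your parenthetical appeal to Remark~3.1 has the direction of transfer reversed (Remark~3.1 goes from $\mu$ to $\mathbb{P}^{\N}_{\theta^c}$, not the other way), and in fact Theorem~3.9 obtains pointwise convergence directly under $\mu$ via Assumption~3.2 and Doob's theorem rather than by transferring Theorem~3.4; this does not affect the corollary itself.
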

\begin{proof}
Since we have established the uniform convergence of BRO problems' objectives in the proof of \cref{thm3.9}, the result is due to the second part of \cref{lemma3.6}.
\end{proof}

As a special case, convex functions have the following nice property regarding uniform convergence: if a sequence of convex functions converges pointwise on an open set $O\subset \R^n$, then it also converges uniformly on any compact subset of $O$ (see, e.g., \cite{hiriart2012fundamentals}, Theorem 3.1.4). This leads to the following corollary of \cref{thm3.4} for convex risk measures (e.g., mean and CVaR).

\begin{corollary} \label{cor3.11}
Suppose \cref{assump3.1} holds, and $H(x, \cdot)$ is continuous on $\Theta$ for every $x \in \mathcal{X}$. Also let $H(\cdot, \theta)$ be convex in $x$ for all $\theta \in \Theta$, then for the mean and the CVaR formulations,  we have $\mathbb{D}(S_n, S) \rightarrow 0$ a.s. $(\mathbb{P}^{\N}_{\theta^c})$.
\end{corollary}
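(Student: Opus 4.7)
The plan is to reduce the corollary to an application of \cref{lemma3.6}, with the pointwise-to-uniform upgrade supplied by convexity instead of a Lipschitz bound. The first step is to revisit the event $\tilde{\Omega}$ with $\mathbb{P}^{\N}_{\theta^c}(\tilde{\Omega}) = 1$ constructed in the proof of \cref{lemma3.3}; on $\tilde{\Omega}$ we have $\mathbb{P}_n \Rightarrow \delta_{\theta^c}$. Because $\tilde{\Omega}$ is defined without reference to $x$, the proof of \cref{thm3.4} yields, on this single event, $\rho_{\mathbb{P}_n}[H(x,\theta)] \rightarrow H(x,\theta^c)$ \emph{simultaneously} for every fixed $x \in \mathcal{X}$. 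From this point on I would fix a sample path in $\tilde{\Omega}$ and argue deterministically.

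The next step is to verify that $f_n(x) := \rho_{\mathbb{P}_n}[H(x,\theta)]$ is convex in $x$ for the two risk functionals of interest. For mean this follows immediately from linearity of expectation together with the assumed convexity of $H(\cdot,\theta)$. For CVaR, which is coherent, I would combine the pointwise inequality $H(\lambda x + (1-\lambda) y, \theta) \le \lambda H(x,\theta) + (1-\lambda) H(y,\theta)$ with monotonicity to obtain $\text{CVaR}^\alpha_{\mathbb{P}_n}[H(\lambda x + (1-\lambda) y,\theta)] \le \text{CVaR}^\alpha_{\mathbb{P}_n}[\lambda H(x,\theta) + (1-\lambda) H(y,\theta)]$, and then apply subadditivity and positive homogeneity to pull out the convex combination. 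Hence $\{f_n\}$ is a sequence of convex functions converging pointwise on $\mathcal{X}$ to the convex limit $H(\cdot,\theta^c)$.

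With pointwise convergence and convexity in hand, the plan is to invoke the convex-analysis fact cited just before the corollary (Theorem 3.1.4 in \cite{hiriart2012fundamentals}): a sequence of convex functions converging pointwise on an open convex set converges uniformly on every compact subset. Viewing $\mathcal{X}$ as a compact subset of an open set on which $H(\cdot,\theta)$ is finite and convex, this upgrades pointwise convergence of $f_n$ on $\tilde{\Omega}$ to uniform convergence on $\mathcal{X}$, and the limit $H(\cdot,\theta^c)$ is convex and hence continuous there. An appeal to \cref{lemma3.6} then delivers $\mathbb{D}(S_n, S) \rightarrow 0$ on $\tilde{\Omega}$, which is exactly the claimed almost sure convergence under $\mathbb{P}^{\N}_{\theta^c}$.

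The main obstacle I anticipate is this pointwise-to-uniform upgrade: the Hiriart-Urruty theorem is phrased for open convex sets, while $\mathcal{X}$ is only closed, so some care is required at the boundary. The natural resolution is to assume (explicitly or implicitly in the formulation) that the $f_n$'s extend convexly to an open neighborhood of $\mathcal{X}$, in which case uniform convergence on the compact $\mathcal{X}$ is immediate; alternatively, one can apply the theorem to the relative interior of $\mathcal{X}$ and then invoke the boundary continuity of finite convex functions on a compact convex set. Everything else — the convexity-preservation computations for mean and CVaR and the invocation of \cref{lemma3.6} — is routine once uniform convergence is secured.
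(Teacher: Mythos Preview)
Your proposal is correct and follows essentially the same route the paper has in mind: the corollary is stated without proof immediately after the convex-analysis fact (pointwise convergence of convex functions implies uniform convergence on compact subsets), so the intended argument is precisely \cref{thm3.4} for pointwise convergence on the single event $\tilde{\Omega}$, convexity of $f_n$ for mean and CVaR, the Hiriart--Urruty upgrade to uniform convergence, and then \cref{lemma3.6}. The boundary/open-set subtlety you flag is real but is equally glossed over in the paper's treatment; your suggested resolutions are the standard ones.
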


\section{Asymptotic normality of objectives} \label{sec:4}
We present two types of asymptotic normality results in this section. First, we show for a fixed $x$ that $\sqrt{n}\{\rho_{\, \mathbb{P}_n}[H(x, \theta)] - H(x, \theta^c)\}$ converges weakly to a normal distribution. Then, we extend this result by establishing weak convergence of $\sqrt{n}\{\rho_{\, \mathbb{P}_n}[H(\cdot, \theta)] - H(\cdot, \theta^c)\}$ in the space of continuous functions. To begin with, define $Z_n(\theta) := \sqrt{n} (\theta - \theta^c)$ and let $\mathbb{P}_{Z_n} := \mathbb{P}_n \circ Z_n^{-1}$ be the distribution of $Z_n$ induced by $\mathbb{P}_n$. 
\begin{definition}
For two probability measures $\mu$ and $\nu$ on a measurable space $(\Omega, \mathcal{F})$, their total variation distance is defined as $\|\mu - \nu\|_{\text{TV}} := \sup_{A \in \mathcal{F}} |\mu(A)-\nu(A)| $.
\end{definition}

\begin{lemma}[Bernstein-von Mises Theorem] \label{lemma4.2}
Under mild conditions, 
\begin{equation} \label{eq4.1}
\big\|\mathbb{P}_{Z_n} - \mathcal{N}(\Delta_{n}, [I(\theta^c)]^{-1})\big\|_{\emph{TV}} \rightarrow 0 \text{ in probability }(\mathbb{P}^{\N}_{\theta^c}) \text{ as } n \rightarrow \infty,
\end{equation}
where $\mathcal{N}$ denotes a normal distribution, $I(\theta^c)$ is the Fisher information $\xi_i$ carries about $\theta^c$, and $\Delta_n \Rightarrow \mathcal{N}(0, [I(\theta^c)]^{-1})$ as $n \rightarrow \infty$.
\end{lemma}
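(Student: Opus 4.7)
The plan is to follow the classical Bernstein--von Mises argument, which extracts the total variation convergence from a local asymptotic normality (LAN) expansion of the log-likelihood combined with the posterior consistency of \cref{lemma3.1}. The result is well-established in the Bayesian asymptotics literature; what follows is a road map rather than a self-contained derivation.

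The starting point is to write the density of $Z_n = \sqrt{n}(\tilde\theta - \theta^c)$ under the posterior as
\begin{equation*}
p_{Z_n}(z) \,=\, \frac{\pi(\theta^c + z/\sqrt{n})\, L_n(z)}{\int \pi(\theta^c + u/\sqrt{n})\, L_n(u)\, du}, \qquad L_n(z) \,:=\, \prod_{i=1}^n \frac{p_{\theta^c + z/\sqrt{n}}(\xi_i)}{p_{\theta^c}(\xi_i)}.
\end{equation*}
Under standard differentiability-in-quadratic-mean conditions, the LAN expansion yields
\begin{equation*}
\log L_n(z) \,=\, z^\intercal S_n - \tfrac{1}{2} z^\intercal I(\theta^c) z + r_n(z),
\end{equation*}
where $S_n := \tfrac{1}{\sqrt{n}} \sum_{i=1}^n \dot\ell_{\theta^c}(\xi_i)$ is the normalized score and $r_n(z) \to 0$ in $\mathbb{P}^{\N}_{\theta^c}$-probability for each fixed $z$. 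Setting $\Delta_n := [I(\theta^c)]^{-1} S_n$ and completing the square identifies the candidate limit $\mathcal{N}(\Delta_n, [I(\theta^c)]^{-1})$. Applying the classical CLT to the i.i.d.\ score (which has mean zero and covariance $I(\theta^c)$) gives $\Delta_n \Rightarrow \mathcal{N}(0, [I(\theta^c)]^{-1})$, which is the second assertion of the lemma.

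Next, I would lift this local, pointwise approximation to the claimed total variation statement. Because $\pi$ is continuous and strictly positive at $\theta^c$, $\pi(\theta^c + z/\sqrt{n}) \to \pi(\theta^c)$ for each fixed $z$, so the numerator and denominator above approach their Gaussian counterparts pointwise. Scheff\'e's lemma then converts pointwise convergence of densities into convergence in $\|\cdot\|_{\text{TV}}$, provided the posterior mass is asymptotically concentrated on a common compact set.

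The main obstacle lies in exactly this concentration step: LAN is only a local expansion and says nothing about the tails of $\mathbb{P}_{Z_n}$. This is where \cref{lemma3.1} becomes indispensable --- for any sequence $M_n \to \infty$, posterior consistency implies $\mathbb{P}_{Z_n}(\|z\| > M_n) \to 0$ in probability, so the tail contribution to $\|\cdot\|_{\text{TV}}$ is negligible. Combining the local LAN approximation inside a ball of slowly growing radius with this outside tail control yields the stated convergence. Since each individual ingredient (LAN, score CLT, Scheff\'e, consistency-driven tail negligibility) is standard in the Bayesian asymptotics literature, I would ultimately invoke the canonical Bernstein--von Mises theorem and only verify that the paper's assumptions imply the required regularity.
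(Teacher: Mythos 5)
The paper does not prove this lemma at all: it is stated as a citation of Theorem 10.1 in van der Vaart's \emph{Asymptotic Statistics}, with the conditions ``assumed to hold in all subsequent proofs.'' Your decision to ultimately invoke the canonical Bernstein--von Mises theorem and merely check regularity is therefore exactly what the paper does, and your road map (posterior density of $Z_n$, LAN expansion, $\Delta_n = [I(\theta^c)]^{-1}S_n$ with the score CLT giving $\Delta_n \Rightarrow \mathcal{N}(0,[I(\theta^c)]^{-1})$, then Scheff\'e plus tail control) is a faithful sketch of the standard proof of that cited result.

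One step of the sketch is wrong as stated, and it is precisely the step that makes BvM nontrivial. You claim that \cref{lemma3.1} (Schwartz consistency) implies $\mathbb{P}_{Z_n}(\|z\|>M_n)\to 0$ in probability for any $M_n\to\infty$. It does not: Schwartz's theorem gives posterior mass $\to 1$ on \emph{fixed} neighborhoods $V$ of $\theta^c$, whereas $\mathbb{P}_{Z_n}(\|z\|>M_n)\to 0$ is the statement that the posterior contracts on balls of radius $M_n/\sqrt{n}$, i.e.\ $\sqrt{n}$-rate contraction. Deriving that rate is the hard part of the Bernstein--von Mises proof and requires the stronger machinery of van der Vaart's Lemma 10.3 (tests with exponentially small errors on the complement of $\sqrt{n}$-balls, together with a prior mass condition near $\theta^c$), not mere consistency. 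Since you defer to the canonical theorem in the end this does not invalidate your conclusion, but if you tried to execute the argument as written, the tail-control step would fail with only \cref{lemma3.1} in hand.
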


We refer the reader to Theorem 10.1 in \cite{van2000asymptotic} for detailed conditions of \cref{lemma4.2}, which are mild and are assumed to hold in all subsequent proofs. We remark that \cref{lemma4.2} is also commonly referred to as the Bayesian Central Limit Theorem. Recall that we consider a law-invariant $\rho$, so there is no ambiguity in writing $\rho(\mathbb{P})$ for some distribution $\mathbb{P}$. The forthcoming proofs of asymptotic normality are motivated by the following heuristic argument. 

{\bf Step 1.} If $\rho$ is translation-invariant and positive homogeneous, then 
\begin{equation} \label{gap1}
\sqrt{n} \{\rho_{\, \mathbb{P}_n}[H(x, \theta)] - H(x, \theta^c) \} = \rho_{\, \mathbb{P}_n} \{\sqrt{n} [H(x, \theta) - H(x, \theta^c)]\} \approx \rho_{\, \mathbb{P}_n}[X_n(\theta)],
\end{equation}
where $X_n(\theta): =\nabla_\theta H(x, \theta^c)^\intercal Z_n(\theta)$ is the first-order Taylor approximation.

{\bf Step 2.} Based on \cref{lemma4.2}, show that 
\begin{equation} \label{gap2}
\|\mathbb{P}_n \circ X_n^{-1} - \mathcal{N}(\nabla_\theta H(x, \theta^c)^\intercal\Delta_n, \sigma^2_x)\|_{\text{TV}} \rightarrow 0 \quad \text{in probability,}
\end{equation}
where $\sigma^2_x := \nabla_\theta H(x, \theta^c)^\intercal [I(\theta^c)]^{-1} \nabla_\theta H(x, \theta^c)$.

{\bf Step 3.} 
Since $\rho_{\, \mathbb{P}_n}(X_n(\theta)) = \rho(\mathbb{P}_n \circ X_n^{-1})$, it suffices to show that
\begin{equation} \label{gap3}
\rho(\mathbb{P}_n \circ X_n^{-1}) - \rho[\mathcal{N}(\nabla_\theta H(x, \theta^c)^\intercal\Delta_n, \sigma^2_x)] \rightarrow 0 \quad \text{in probability.}
\end{equation}

If the above argument holds, then the asymptotic distribution of BRO problems' objectives can be easily characterized since $\rho[\mathcal{N}(\nabla_\theta H(x, \theta^c)^\intercal\Delta_n, \sigma^2_x)]$ allows closed forms for all four choices of $\rho$. However, each step listed above involves a gap to be closed. In particular, note that $\mathcal{N}(\nabla_\theta H(x, \theta^c)^\intercal\Delta_n, \sigma^2_x)$ is not a fixed measure. Thus, from a general perspective, step 3 essentially investigates the following: for two sequences of probability measures $\{\mu_n\}$ and $\{\nu_n\}$, does $\|\mu_n - \nu_n \|_{\text{TV}} \rightarrow 0$ imply that $\rho(\mu_n) - \rho(\nu_n) \rightarrow 0$? In other words, when $\rho$ is viewed as a functional of distributions, is it uniformly continuous relative to the total variation metric? Unfortunately, this is not true for our four choices of $\rho$. Nevertheless, it is possible for us to exploit the structure of $\mathcal{N}(\Delta_n, [I(\theta^c)]^{-1})$ to circumvent this issue.

\subsection{Asymptotic normality at a fixed $x$} \label{sec:4.1}
Once $x$ is fixed, we write $H(x, \theta)$ as $H(\theta)$ for notational brevity. Consider $\|\cdot\|$ being the Euclidean norm henceforth for convenience. In establishing asymptotic normality, each choice of $\rho$ has distinct properties and deserves separate treatment. To bridge the gaps in the preceding sketch of proof, we need the following regularity condition.

\begin{assumption} \label{assump4.1}
There exists a constant $\gamma>0$ such that for all $\epsilon >0$, there exists an $M_{\epsilon} >0$ satisfying
$$\mathbb{P}^{\N}_{\theta^c} \left\{\E_{\mathbb{P}_n} \left[\|\sqrt{n}(\theta - \theta^c)\|^{1+\gamma}\right] > M_{\epsilon} \right\} < \epsilon, \quad \forall n.$$
\end{assumption}

\cref{assump4.1} can be viewed as an ``in probability'' version of uniform integrability, because on event $\{\E_{\mathbb{P}_n} \left[\|\sqrt{n}(\theta - \theta^c)\|^{1+\gamma}\right] \le M_{\epsilon}\}$, we have 
\begin{equation} \label{eqUI}
\E_{\mathbb{P}_n} \left[\|\sqrt{n}(\theta - \theta^c)\| \mathbbm{1}_{\{\|\sqrt{n} (\theta - \theta^c)\| > K\}}\right] \le \frac{\E_{\mathbb{P}_n} \left[\|\sqrt{n}(\theta - \theta^c)\|^{1+\gamma}\right]}{K^\gamma} \le \frac{M_{\epsilon}}{K^\gamma}
\end{equation}
for any $K>0$. Thus, for sufficiently large $K$, the truncated tail expectation of $\|\sqrt{n} (\theta - \theta^c)\|$ can be arbitrarily small with a large probability ($\mathbb{P}^{\N}_{\theta^c}$) for all $n$. Another implication is that
$$\mathbb{P}_n(\|\sqrt{n} (\theta - \theta^c)\| > K) \le \frac{M_{\epsilon}}{K^{1+\gamma}}$$
by Markov's inequality. As we will see in the proof of \cref{thm4.4}, \cref{assump4.1} plays a vital role in bounding the remainder term in Taylor expansion. The following lemma is a special case of Theorem 10.8 in \cite{van2000asymptotic}, where the conditions are implicitly assumed to hold in all subsequent proofs. 
\begin{lemma} \label{lemma4.3}
Under mild assumptions,
\begin{equation*}
\sqrt{n}\left(\E_{\mathbb{P}_n}[\theta] - \theta^c\right) \Rightarrow \mathcal{N}(0, [I(\theta^c)]^{-1}), \quad \text{as } n \rightarrow \infty.
\end{equation*}
\end{lemma}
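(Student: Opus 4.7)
The plan is to deduce the result from the Bernstein--von Mises Theorem (\cref{lemma4.2}) combined with Slutsky's Theorem. Observe that
$$\sqrt{n}\bigl(\E_{\mathbb{P}_n}[\theta] - \theta^c\bigr) = \int z \, d\mathbb{P}_{Z_n}(z),$$
i.e., the quantity of interest is simply the mean of the (random) distribution $\mathbb{P}_{Z_n}$. By \cref{lemma4.2}, $\mathbb{P}_{Z_n}$ is asymptotically close in total variation to $\mathcal{N}_n := \mathcal{N}(\Delta_n, [I(\theta^c)]^{-1})$, whose mean is exactly $\Delta_n$, and $\Delta_n \Rightarrow \mathcal{N}(0, [I(\theta^c)]^{-1})$. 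Therefore it suffices to show
$$\int z \, d\mathbb{P}_{Z_n}(z) - \Delta_n \;\to\; 0 \quad \text{in probability under } \mathbb{P}^{\N}_{\theta^c},$$
since Slutsky's Theorem then gives the advertised weak limit.

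\textbf{Key step: bridging total variation and first moments.} Total-variation convergence controls integrals of bounded test functions, whereas $z \mapsto z$ is unbounded, so a direct comparison fails. I would handle this via truncation. For $K>0$, decompose
$$\int z \, d\mathbb{P}_{Z_n}(z) - \Delta_n = \int z\,\mathbbm{1}_{\|z\|\le K} \, d(\mathbb{P}_{Z_n}-\mathcal{N}_n)(z) + \int z\,\mathbbm{1}_{\|z\|> K} \, d\mathbb{P}_{Z_n}(z) - \int z\,\mathbbm{1}_{\|z\|> K} \, d\mathcal{N}_n(z).$$
The first term is bounded in norm by $2K\|\mathbb{P}_{Z_n}-\mathcal{N}_n\|_{\text{TV}}$, which tends to zero in probability by \cref{lemma4.2}. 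The third term is made arbitrarily small uniformly in $n$ by choosing $K$ large, since $\Delta_n$ is tight (being weakly convergent) and the Gaussian tails of $\mathcal{N}_n$ beyond $\|z\|>K$ are uniformly negligible. The middle term is precisely where \cref{assump4.1} is needed: as shown in \cref{eqUI}, on an event of $\mathbb{P}^{\N}_{\theta^c}$-probability at least $1-\epsilon$, this truncated tail expectation is bounded by $M_{\epsilon}/K^{\gamma}$, which vanishes as $K\to\infty$.

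\textbf{Synthesis and main obstacle.} Combining the three bounds, for any $\delta,\epsilon>0$ I would first choose $K$ large enough that the second and third terms are each below $\delta/3$ off a set of $\mathbb{P}^{\N}_{\theta^c}$-probability at most $\epsilon$, and then take $n$ large enough that the first term is below $\delta/3$ off another such set. This yields the desired in-probability convergence to $0$, and Slutsky's Theorem completes the proof. The principal obstacle is precisely the passage from total-variation closeness to closeness of first moments: this is not automatic, because unbounded observables are not controlled by the total-variation metric. \cref{assump4.1} plays the role of a uniform-integrability condition in probability and is the minimal extra ingredient needed to close this gap; without it, one could imagine posterior tails fat enough to corrupt the mean even though the bulk of the distribution is accurately Gaussian.
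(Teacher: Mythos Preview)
Your argument is correct. The truncation scheme is the standard device for upgrading total-variation convergence to convergence of first moments, and you have identified the three pieces accurately: the bounded part is handled by \cref{lemma4.2}, the posterior tail by \cref{assump4.1} via \cref{eqUI}, and the Gaussian tail by tightness of $\Delta_n$ together with the fixed covariance $[I(\theta^c)]^{-1}$. Slutsky then closes the argument.

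The paper, however, does not prove \cref{lemma4.3} at all: it simply records it as a special case of Theorem~10.8 in van der Vaart's \emph{Asymptotic Statistics} and implicitly adopts that theorem's hypotheses. So your route is genuinely different in that you reconstruct the result internally from \cref{lemma4.2} and \cref{assump4.1}, whereas the paper outsources it to an external reference. What your approach buys is self-containment and transparency: it makes explicit that \cref{assump4.1} is exactly the uniform-integrability ingredient needed to pass from Bernstein--von Mises to convergence of the posterior mean, which is otherwise only hinted at in the paper's later remarks. What the paper's approach buys is brevity and a slightly cleaner logical separation, since the ``mild assumptions'' of van der Vaart's theorem need not literally coincide with \cref{assump4.1} (though they are of the same nature). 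Either way, the substance is the same moment-plus-truncation mechanism you describe.
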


We now verify \cref{assump4.1} for some commonly used conjugate priors. Notice that if $\Theta \subseteq \R$, then for $\gamma = 1$,
\begin{equation*}
\E_{\mathbb{P}_n}\left\{[\sqrt{n}(\theta - \theta^c)]^2\right\} = \left\{\sqrt{n} (\E_{\mathbb{P}_n}[\theta] - \theta^c)\right\}^2 + n \text{Var}_{\mathbb{P}_n}[\theta],
\end{equation*}
where the first term in the RHS converges in distribution by \cref{lemma4.3}, so we only need to check if the second term is bounded in probability $(\mathbb{P}^{\N}_{\theta^c})$.
\begin{example}
Let $\xi_i \sim \text{Expo}(\theta^c)$ and $\pi \sim \text{Gamma}(\alpha_0, \beta_0)$. Then, ${\mathbb{P}_n}$ is given by $\text{Gamma}(\alpha_n, \beta_n)$, where $\alpha_n = \alpha_0 + n$ and $\beta_n = \beta_0 + \sum_{i=1}^n \xi_i$. Furthermore,
\begin{equation*}
n \text{Var}_{\mathbb{P}_n}[\theta] = \frac{n \alpha_n}{{\beta_n}^2} = \left(\frac{n}{\beta_0 + \sum_{i=1}^n \xi_i} \right)^2 + \frac{\alpha_0 n}{(\beta_0 + \sum_{i=1}^n \xi_i)^2} \rightarrow (\theta^c)^2 \quad \text{a.s. } (\mathbb{P}^{\N}_{\theta^c} ),
\end{equation*}
where the convergence follows from the strong law of large numbers (SLLN).
\end{example}

\begin{example}
Let $\xi_i \sim \mathcal{N}(\theta^c, \sigma^2)$, where $\sigma^2$ is known and $\pi \sim \mathcal{N}(\mu_0, \sigma_0^2)$. We then have
\begin{equation*}
n \text{Var}_{\mathbb{P}_n}[\theta] = n \sigma_n^2 = \frac{n}{1/\sigma_0^2 + n/\sigma^2} \rightarrow \sigma^2 \quad  \text{a.s. } (\mathbb{P}^{\N}_{\theta^c}).
\end{equation*}
\end{example}

\begin{example}
Let $\xi_i \sim \text{Weibull}(\theta^c, \beta)$, where $\theta^c$ is an unknown scale parameter and $\beta$ is a known shape parameter. Let the posterior of $\tilde{\theta}^\beta$ be $\text{InvGamma}(\alpha_n, \beta_n)$, where $\alpha_n = \alpha_0 + n$ and $\beta_n = \beta_0 + \sum_{i=1}^n \xi_i^\beta$. Then by the SLLN, 
\begin{equation*}
n \text{Var}_{\mathbb{P}_n}[\theta^\beta] = \frac{n\beta_n^2}{(\alpha_n-1)^2(\alpha_n-2)}  = \frac{n^3 (\beta_0/n + \sum_{i=1}^n \xi_i^\beta /n)^2}{(\alpha_0+n-1)^2 (\alpha_0 + n-2)} \rightarrow (\theta^c)^{2\beta} \text{a.s. } (\mathbb{P}^{\N}_{\theta^c}).
\end{equation*}
\end{example}

\begin{example}
Let $\xi_i$ be a discrete random variable supported on $\{y_1, \ldots, y_l\}$. Suppose $\mathbb{P}(\xi_i = y_i) = \theta^c_i$, then $\theta^c:=(\theta^c_1, \ldots, \theta^c_l)$ can be viewed as a parameter in $\R^l$. Choose $\pi \sim \text{Dirichlet}(\alpha_0)$, where $\alpha_0 = (1,\ldots, 1)$. It follows that $\mathbb{P}_n \sim \text{Dirichlet}(\alpha_n)$, where $\alpha_n = \alpha_0 + (N_1, \ldots, N_l)$ and $N_i := \sum_{j=1}^n \mathbbm{1}_{\{\xi_j = y_i\}}$. Let $\theta_i$, $\theta^c_i$ and $\alpha^n_i$ denote the $i$th component of $\theta$, $\theta^c$ and $\alpha_n$, respectively. Since
\begin{equation*}
\E_{\mathbb{P}_n}\left[\|\sqrt{n} (\theta - \theta^c)\|^2\right] = n\sum_{i=1}^l \E_{\mathbb{P}_n}[(\theta_i - \theta^c_i)^2],
\end{equation*}
it suffices to check the convergence of $n\E_{\mathbb{P}_n}[(\theta_i - \theta^c_i)^2]$ for each $i$. Likewise,
\begin{equation*}
n\E_{\mathbb{P}_n}[(\theta_i - \theta^c_i)^2] = n(\E_{\mathbb{P}_n}[\theta_i] - \theta^c_i)^2 + n\text{Var}_{\mathbb{P}_n}(\theta_i),
\end{equation*}
where, by noting $\sum_{j=1}^l \alpha^n_j = l + n$, we have
\begin{equation*}
\sqrt{n}(\E_{\mathbb{P}_n}[\theta_i] - \theta^c_i) = \sqrt{n}\left(\frac{\alpha^n_i}{l+n} - \theta^c_i \right) = \sqrt{n} \left(\frac{1 + N_i}{l+n} - \theta^c_i \right),
\end{equation*}
which converges weakly by the Central Limit Theorem, and the SLLN implies that
\begin{equation*}
n\text{Var}_{\mathbb{P}_n}(\theta_i) = n \frac{\alpha^n_i(l+n - \alpha^n_i)}{(l+n)^2(l+n+1)} \rightarrow \theta^c_i (1 - \theta^c_i) \quad \text{a.s. } (\mathbb{P}^{\N}_{\theta^c}).
\end{equation*}
\end{example}

The proofs of asymptotic normality will be presented in the order of mean, mean-variance, VaR and CVaR. Recall from previous notation that
$$\sigma^2_x := \nabla_\theta H(x, \theta^c)^\intercal [I(\theta^c)]^{-1} \nabla_\theta H(x, \theta^c).$$

\begin{theorem} \label{thm4.4}
Let \cref{assump3.1} and \cref{assump4.1} hold. If $H$ is continuous on $\Theta$ and differentiable at $\theta^c$, then 
\begin{equation*}
\sqrt{n}\left\{\E_{\mathbb{P}_n}[H(\theta)] - H(\theta^c) \right\} \Rightarrow \mathcal{N}\left(0, \sigma^2_x\right) \quad \text{as } n\rightarrow \infty.
\end{equation*}
If furthermore \cref{assump4.1} holds with $\gamma=1$, then as $n\rightarrow \infty$,
\begin{equation*}
\sqrt{n}\left\{\E_{\mathbb{P}_n}[H(\theta)] + w\text{Var}_{\mathbb{P}_n}[H(\theta)] - H(\theta^c) \right\} \Rightarrow \mathcal{N}\left(0, \sigma^2_x \right).
\end{equation*}
\end{theorem}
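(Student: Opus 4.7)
The plan is to execute the three-step heuristic outlined in the paper, but in its cleanest form one sidesteps Bernstein--von Mises entirely and proceeds by a direct Taylor expansion of $H$ at $\theta^c$. Writing
$$H(\theta) = H(\theta^c) + \nabla_\theta H(\theta^c)^\intercal (\theta - \theta^c) + R(\theta),$$
differentiability gives $R(\theta) = o(\|\theta - \theta^c\|)$ as $\theta \to \theta^c$. Taking posterior expectations and multiplying by $\sqrt{n}$ yields
$$\sqrt{n}\{\E_{\mathbb{P}_n}[H(\theta)] - H(\theta^c)\} = \nabla_\theta H(\theta^c)^\intercal \sqrt{n}\bigl(\E_{\mathbb{P}_n}[\theta] - \theta^c\bigr) + \sqrt{n}\,\E_{\mathbb{P}_n}[R(\theta)].$$
By \cref{lemma4.3} the linear term converges weakly to $\mathcal{N}(0,\sigma_x^2)$, so via Slutsky's theorem the mean case reduces to proving $\sqrt{n}\,\E_{\mathbb{P}_n}[R(\theta)] \to 0$ in probability under $\mathbb{P}^{\N}_{\theta^c}$.

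To control the remainder I would fix $\eta > 0$ arbitrary and, by differentiability at $\theta^c$, pick $r > 0$ so that $|R(\theta)| \le \eta\|\theta - \theta^c\|$ whenever $\|\theta - \theta^c\| \le r$. Compactness of $\Theta$ and continuity of $H$ then bound $|R|$ by some constant $M$, which on the complement $\{\|\theta - \theta^c\| > r\}$ I recast as $|R(\theta)| \le M(\|\theta - \theta^c\|/r)^{1+\gamma}$. Splitting the integral and rescaling gives
$$\sqrt{n}\,\E_{\mathbb{P}_n}[|R(\theta)|] \le \eta\,\E_{\mathbb{P}_n}\bigl[\|\sqrt{n}(\theta - \theta^c)\|\bigr] + \frac{M}{r^{1+\gamma}\,n^{\gamma/2}}\,\E_{\mathbb{P}_n}\bigl[\|\sqrt{n}(\theta - \theta^c)\|^{1+\gamma}\bigr].$$
\cref{assump4.1} makes the second expectation bounded in probability, and Jensen's inequality does the same for the first; the $n^{-\gamma/2}$ factor sends the second term to zero in probability, while a standard $\varepsilon$-$\delta$ argument in $\eta$ kills the first.

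For the mean-variance formulation, by Slutsky it suffices to prove $\sqrt{n}\,\text{Var}_{\mathbb{P}_n}[H(\theta)] \to 0$ in probability. Using $\text{Var}_{\mathbb{P}_n}[H(\theta)] \le \E_{\mathbb{P}_n}[(H(\theta) - H(\theta^c))^2]$ together with $(a+b)^2 \le 2a^2 + 2b^2$ on the Taylor decomposition gives
$$\sqrt{n}\,\text{Var}_{\mathbb{P}_n}[H(\theta)] \le \frac{2\|\nabla_\theta H(\theta^c)\|^2}{\sqrt{n}}\,\E_{\mathbb{P}_n}\bigl[\|\sqrt{n}(\theta - \theta^c)\|^2\bigr] + 2\sqrt{n}\,\E_{\mathbb{P}_n}[R(\theta)^2].$$
With $\gamma = 1$, the bracketed second moment is bounded in probability so the first term is $O_p(n^{-1/2})$. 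The second term succumbs to the same splitting as before: $R(\theta)^2 \le \eta^2\|\theta - \theta^c\|^2$ on the small ball, and $R(\theta)^2 \le (M^2/r^2)\|\theta - \theta^c\|^2$ on its complement, both of which rescale into $O_p(n^{-1/2})$ quantities.

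The main obstacle throughout is the remainder control: the posterior concentrates near $\theta^c$, yet naively substituting $R = o(\|\theta-\theta^c\|)$ inside the integral is invalid because one must simultaneously rule out the small amount of posterior mass sitting far from $\theta^c$, where the little-$o$ bound breaks down. The resolution is precisely the $(1+\gamma)$-th moment bound in \cref{assump4.1}, which via Markov tames the tail while absorbing the rescaling factor $\sqrt{n}$; the strengthening to $\gamma=1$ is exactly what is needed for the variance-level analysis. Everything else --- the Taylor expansion, the invocation of \cref{lemma4.3}, and the final Slutsky combinations --- is routine.
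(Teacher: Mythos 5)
Your proof is correct, and its skeleton---Taylor expansion at $\theta^c$, \cref{lemma4.3} plus Slutsky for the linear term, and the same $(a+b)^2 \le 2a^2+2b^2$ bound for the variance---coincides with the paper's. The one place you genuinely diverge is the control of the remainder. The paper writes the remainder as $\E_{\mathbb{P}_n}\left[e(\theta)\|\sqrt{n}(\theta-\theta^c)\|\right]$ with $e(\theta)\to 0$ at $\theta^c$, applies H\"older's inequality to split off the factor $\E_{\mathbb{P}_n}\left[|e(\theta)|^{(1+\gamma)/\gamma}\right]$, and sends that factor to zero using the almost-sure posterior consistency $\mathbb{P}_n \Rightarrow \delta_{\theta^c}$ from \cref{lemma3.3}, i.e., the full force of \cref{assump3.1}. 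You instead split $\Theta$ into a ball of radius $r$ around $\theta^c$, where differentiability gives $|R(\theta)|\le\eta\|\theta-\theta^c\|$, and its complement, where the crude global bound $|R|\le M \le M(\|\theta-\theta^c\|/r)^{1+\gamma}$ lets the $(1+\gamma)$-th moment in \cref{assump4.1} absorb the far-field posterior mass with a spare factor of $n^{-\gamma/2}$. Your route buys a more self-contained remainder argument---it needs only the compactness of $\Theta$ (for the constant $M$) and \cref{assump4.1}, not the posterior-consistency machinery---at the cost of slightly more hands-on $\epsilon$--$\delta$ bookkeeping in $\eta$; the paper's H\"older step is terser but leans on Schwartz's consistency theorem. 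Both arguments are valid, and your treatment of the mean-variance case is essentially identical to the paper's.
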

\begin{proof}
The first-order Taylor expansion of $H$ around $\theta^c$ yields
\begin{equation} \label{eq4.5}
\E_{\mathbb{P}_n}[\sqrt{n}(H(\theta) - H(\theta^c))] = \nabla H(\theta^c)^\intercal \E_{\mathbb{P}_n}[\sqrt{n}(\theta - \theta^c)] + \E_{\mathbb{P}_n}[e(\theta) \|\sqrt{n}(\theta - \theta^c)\|],
\end{equation}
where $e(\theta) \rightarrow 0$ if $\theta \rightarrow \theta^c$. The first term in the RHS of \cref{eq4.5} converges weakly to $\mathcal{N}(0, \sigma^2_x)$ by \cref{lemma4.3}. Applying H\"older's inequality to the remainder,
\begin{equation*}
\big|\E_{\mathbb{P}_n}[e(\theta) \|\sqrt{n}(\theta - \theta^c)\|] \big| \le \left(\E_{\mathbb{P}_n}\left[ |e(\theta)|^{\frac{1+\gamma}{\gamma}}\right] \right)^{\frac{\gamma}{1+\gamma}} \left(\E_{\mathbb{P}_n}\left[\|\sqrt{n}(\theta - \theta^c)\|^{1+\gamma} \right] \right)^{\frac{1}{1+\gamma}}.
\end{equation*}
Setting $e(\theta^c) = 0$ does not affect \cref{eq4.5}, so we assume that $e(\cdot)$ is bounded and continuous on $\Theta$ by the continuity of $H$ and the compactness of $\Theta$. From \cref{lemma3.3} we know that $\mathbb{P}_n \Rightarrow \delta_{\theta^c}$ a.s. ($\mathbb{P}^{\N}_{\theta^c}$), thus by \cref{def3.2},
\begin{equation*}
\E_{\mathbb{P}_n}\left[|e(\theta)|^{(1+\gamma)/\gamma}\right] \rightarrow |e(\theta^c)|^{(1+\gamma)/\gamma} = 0 \quad \text{a.s. } (\mathbb{P}^{\N}_{\theta^c}) \quad \text{as } n \rightarrow \infty,
\end{equation*}
which together with \cref{assump4.1} imply that the remainder converges weakly to 0. This proves the the case of mean. For mean-variance, we only need to show that $\sqrt{n} \text{Var}_{\mathbb{P}_n}\{H(\theta)\} \Rightarrow 0$. Note that
\begin{align*}
\sqrt{n} \text{Var}_{\mathbb{P}_n}[H(\theta)] &= \sqrt{n} \text{Var}_{\mathbb{P}_n}[H(\theta) - H(\theta^c)] \le \sqrt{n} \E_{\mathbb{P}_n}\left\{[H(\theta) - H(\theta^c)]^2 \right\} \\
&=\sqrt{n}\E_{\mathbb{P}_n}\left[\left(\nabla H(\theta^c)^\intercal (\theta - \theta^c) + e(\theta) \|\theta - \theta^c\| \right)^2\right]\\
& \le \underbrace{2\sqrt{n} \E_{\mathbb{P}_n}\left[ |\nabla H(\theta^c)^\intercal (\theta - \theta^c) |^2 \right]}_{(*)} +\underbrace{2\sqrt{n} \E_{\mathbb{P}_n}\left\{[e(\theta)]^2 \|\theta - \theta^c\|^2 \right\}}_{(**)},
\end{align*}
where the last inequality follows from $(a+b)^2 \le 2a^2 + 2b^2$. Furthermore, 
\begin{equation*}
(*) \le \frac{2}{\sqrt{n}} \|\nabla H(\theta^c)\|^2 \E_{\mathbb{P}_n}\{\|\sqrt{n}(\theta - \theta^c)\|^2\} \Rightarrow 0 \quad \text{as } n \rightarrow \infty
\end{equation*}
since $\E_{\mathbb{P}_n}\{\|\sqrt{n}(\theta - \theta^c)\|^2\}$ is bounded in probability ($\mathbb{P}^{\N}_{\theta^c}$) by assumption. Similarly, we have $(**) \Rightarrow 0$ by the boundedness of $e(\cdot)$ on $\Theta$.
\end{proof}

\begin{remark}
The proof of \cref{thm4.4} is basically a combination of \cref{lemma4.3} and the Delta method. From definition we know that convergence in total variation implies weak convergence, which together with uniform integrability implies convergence of expectation (see, e.g., \cite{billingsley2013convergence}, Theorem 3.5). This is the main motivation behind \cref{assump4.1}.
\end{remark}

For notational ease, let $\mathbb{P}_X$ denote the distribution of a random variable $X$. Also let $\phi$ and $\Phi$ be the density and cumulative distribution function of $\mathcal{N}(0,1)$, respectively. The forthcoming proof for VaR is based on a series of lemmas presented in the same order as the steps in the heuristic argument: \cref{lemma4.5} copes with the remainder term in Taylor expansion; \cref{lemma4.6} shows that the total variation distance between two distributions will not increase under reasonable mappings; \cref{lemma4.7} closes the final gap between convergence in total variation and convergence of VaR.

\begin{lemma} \label{lemma4.5}
Let $X$ and $Y$ be two random variables, where $X$ and $X+Y$ both have positive densities. Given $\alpha \in (0, 1)$ and $\epsilon \in (0, \min\{\alpha, 1 - \alpha\})$, suppose that $\mathbb{P}(|Y| > \delta) < \epsilon$ for some $\delta>0$. Then,
\begin{equation*}
\text{VaR}^{\alpha - \epsilon}(X) - \delta \le \text{VaR}^\alpha(X+Y) \le \text{VaR}^{\alpha+\epsilon}(X)+\delta.
\end{equation*}
\end{lemma}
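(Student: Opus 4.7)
The plan is to derive the two-sided bound from a simple tail-decomposition inequality relating $\mathbb{P}(X+Y \le t)$ to $\mathbb{P}(X \le t \pm \delta)$, and then invert these probability bounds into quantile bounds using the positive-density assumption.

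First I would split on the event $\{|Y| \le \delta\}$ to obtain the sandwich
\begin{equation*}
\mathbb{P}(X \le t - \delta) - \epsilon \;\le\; \mathbb{P}(X + Y \le t) \;\le\; \mathbb{P}(X \le t + \delta) + \epsilon.
\end{equation*}
For the upper bound, write $\mathbb{P}(X+Y \le t) \le \mathbb{P}(X+Y \le t, |Y| \le \delta) + \mathbb{P}(|Y| > \delta)$; on the first event $X \le t + \delta$, and the second is bounded by $\epsilon$. For the lower bound, note that $\{X \le t - \delta, |Y| \le \delta\} \subseteq \{X+Y \le t\}$, and $\mathbb{P}(X \le t - \delta, |Y| \le \delta) \ge \mathbb{P}(X \le t - \delta) - \mathbb{P}(|Y| > \delta)$.

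Next, set $t := \text{VaR}^{\alpha}(X+Y)$. Since $X + Y$ has a positive density, its CDF is continuous and $\mathbb{P}(X + Y \le t) = \alpha$ exactly. Plugging into the two inequalities gives $\mathbb{P}(X \le t + \delta) \ge \alpha - \epsilon$ and $\mathbb{P}(X \le t - \delta) \le \alpha + \epsilon$. The constraint $\epsilon < \min\{\alpha, 1-\alpha\}$ guarantees that $\alpha \pm \epsilon \in (0,1)$, so the quantiles $\text{VaR}^{\alpha \pm \epsilon}(X)$ are well defined.

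Finally, I would invert these probability bounds using the definition of VaR together with the positive density of $X$. From $\mathbb{P}(X \le t+\delta) \ge \alpha - \epsilon$, the infimum definition gives $\text{VaR}^{\alpha - \epsilon}(X) \le t + \delta$; from $\mathbb{P}(X \le t-\delta) \le \alpha + \epsilon$, continuity and strict monotonicity of $F_X$ imply $t - \delta \le \text{VaR}^{\alpha+\epsilon}(X)$. Rearranging both inequalities yields the claim. The only mild subtlety is the inversion in the lower-bound direction: without a positive density for $X$, $F_X$ might be flat on a plateau at level $\alpha+\epsilon$ and one would only get $t - \delta \le \text{VaR}^{\alpha+\epsilon+}(X)$; the positive-density hypothesis is precisely what rules this out and keeps the statement clean.
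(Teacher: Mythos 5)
Your proof is correct and uses essentially the same argument as the paper: the same decomposition on the event $\{|Y|\le\delta\}$ to relate the distribution of $X+Y$ to that of $X$ shifted by $\pm\delta$ at a cost of $\epsilon$ in probability, followed by inversion into quantile bounds via the positive-density (strict monotonicity) assumptions. The only cosmetic difference is that you evaluate at $t=\text{VaR}^\alpha(X+Y)$ and invert through $F_X$, whereas the paper evaluates at $\text{VaR}^{\alpha\mp\epsilon}(X)\mp\delta$ and inverts through $F_{X+Y}$; your closing remark correctly identifies where the positive density of $X$ is actually needed.
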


\begin{proof}
Since $X$ and $X+Y$ have positive densities, their cumulative distribution functions are continuous and strictly increasing. Thus,
\begin{equation*}
\mathbb{P}(X \le \text{VaR}^\alpha(X)) = \alpha, \quad \mathbb{P}(X+Y \le \text{VaR}^\alpha(X+Y)) = \alpha, \quad \forall \alpha \in (0,1).
\end{equation*}
fully characterizes $\text{VaR}^\alpha(X)$ and $\text{VaR}^\alpha(X+Y)$. The conclusion then follows from the next two observations.
\begin{enumerate}
\item[(i)]
$\mathbb{P}(X+Y \le \text{VaR}^{\alpha - \epsilon}(X) - \delta) \le \alpha:$
\begin{align*}
\text{LHS} &\le \mathbb{P}(X+Y \le \text{VaR}^{\alpha - \epsilon}(X) - \delta, |Y| \le \delta) + \mathbb{P}(|Y| > \delta)\\
&\le \mathbb{P}(X+Y \le \text{VaR}^{\alpha-\epsilon}(X) - \delta, Y \ge -\delta) + \epsilon\\
&\le \mathbb{P}(X \le \text{VaR}^{\alpha - \epsilon}(X)) + \epsilon = \alpha - \epsilon + \epsilon = \alpha.
\end{align*}

\item[(ii)]
$\mathbb{P}(X+Y \le \text{VaR}^{\alpha + \epsilon}(X) + \delta) \ge \alpha:$
\begin{align*}
\text{LHS} &\ge \mathbb{P}(X \le \text{VaR}^{\alpha+\epsilon}(X), Y \le \delta) \\
&\ge 1 - \mathbb{P}(X>\text{VaR}^{\alpha+\epsilon}(X)) - \mathbb{P}(Y > \delta)\\
&\ge \mathbb{P}(X \le \text{VaR}^{\alpha+ \epsilon}(X)) -\epsilon = \alpha + \epsilon - \epsilon = \alpha.
\end{align*}
\end{enumerate}
\end{proof}

\begin{lemma} \label{lemma4.6}
Let $X$ and $Y$ be random variables taking values in a measurable space $(\Omega, \mathcal{F})$. Then, for any measurable function $h: (\Omega, \mathcal{F}) \rightarrow (\tilde{\Omega}, \tilde{\mathcal{F}})$, we have
$$\|\mathbb{P}_{h(X)} - \mathbb{P}_{h(Y)}\|_{\emph{TV}} \le \|\mathbb{P}_X - \mathbb{P}_{Y}\|_{\emph{TV}}.$$
\end{lemma}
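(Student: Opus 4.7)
The plan is to unpack the definition of total variation distance and use the pushforward relation $\mathbb{P}_{h(X)}(A) = \mathbb{P}_X(h^{-1}(A))$ for measurable $h$. Specifically, for any $A \in \tilde{\mathcal{F}}$, measurability of $h$ guarantees $h^{-1}(A) \in \mathcal{F}$, so
\begin{equation*}
\bigl|\mathbb{P}_{h(X)}(A) - \mathbb{P}_{h(Y)}(A)\bigr| = \bigl|\mathbb{P}_X(h^{-1}(A)) - \mathbb{P}_Y(h^{-1}(A))\bigr| \le \|\mathbb{P}_X - \mathbb{P}_Y\|_{\text{TV}}.
\end{equation*}
Taking the supremum over $A \in \tilde{\mathcal{F}}$ on the left yields the desired inequality.

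The underlying idea is just that the collection $\{h^{-1}(A) : A \in \tilde{\mathcal{F}}\}$ is a sub-collection of $\mathcal{F}$, so the supremum defining $\|\mathbb{P}_{h(X)} - \mathbb{P}_{h(Y)}\|_{\text{TV}}$ is being taken over a smaller family of sets than the one defining $\|\mathbb{P}_X - \mathbb{P}_Y\|_{\text{TV}}$. There is no real obstacle here; the only point requiring care is to make sure $h$ being measurable is used explicitly so that the preimages are legitimate elements of $\mathcal{F}$, which is exactly the hypothesis given. The argument does not require $h$ to be injective or continuous, which is important since our applications (e.g.\ to $X_n(\theta) = \nabla_\theta H(x,\theta^c)^\intercal Z_n(\theta)$) will involve linear projections that are typically non-invertible.
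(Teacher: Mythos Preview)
Your proof is correct and is essentially identical to the paper's own argument: both use the pushforward identity $\mathbb{P}_{h(X)}(A)=\mathbb{P}_X(h^{-1}(A))$, bound by the supremum defining $\|\mathbb{P}_X-\mathbb{P}_Y\|_{\text{TV}}$, and then take the supremum over $A\in\tilde{\mathcal{F}}$.
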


\begin{proof}
For any $B \in \tilde{F}$, we have $h^{-1}(B) \in \mathcal{F}$  and
\begin{equation*}
\big|\mathbb{P}_{h(X)}(B) - \mathbb{P}_{h(Y)}(B)\big| = \big|\mathbb{P}_X (h^{-1}(B)) - \mathbb{P}_Y (h^{-1}(B))\big| \le \sup_{A \in \mathcal{F}} |\mathbb{P}_X(A) - \mathbb{P}_Y(A)|,
\end{equation*}
where the last term is $\|\mathbb{P}_X - \mathbb{P}_{Y}\|_{\text{TV}}$. The result follows from taking supremum over $B \in \tilde{\mathcal{F}}$ on both sides.
\end{proof}

\begin{lemma} \label{lemma4.7}
If $X$ is a random variable with positive density, $Y \sim \mathcal{N}(c, \sigma^2)$, and $\|\mathbb{P}_X - \mathbb{P}_Y\|_{\text{TV}} \le \epsilon$, where $\epsilon \in (0, \min\{\alpha, 1 - \alpha\})$ and $\alpha \in (0,1)$, then
\begin{equation*}
\big|\text{VaR}^\alpha (X) - \text{VaR}^\alpha(Y)\big| \le \sigma \max\left\{\Phi^{-1}(\alpha) - \Phi^{-1}(\alpha -\epsilon), \Phi^{-1}(\alpha + \epsilon) - \Phi^{-1}(\alpha)\right\}.
\end{equation*}
\end{lemma}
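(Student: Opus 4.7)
The idea is to turn the total variation bound into a pointwise bound on cumulative distribution functions, then invert the CDF of the normal random variable $Y$ to sandwich $\text{VaR}^{\alpha}(X)$ between two explicit normal quantiles. Since $\text{VaR}^{\alpha}(Y) = c + \sigma \Phi^{-1}(\alpha)$, the result will follow directly from this sandwich.

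The first step is to note that, specializing the definition of total variation to half-lines $A = (-\infty, t]$, the hypothesis $\|\mathbb{P}_X - \mathbb{P}_Y\|_{\text{TV}} \le \epsilon$ yields $|F_X(t) - F_Y(t)| \le \epsilon$ for every $t \in \mathbb{R}$, where $F_X$ and $F_Y$ are the CDFs of $X$ and $Y$. Since $X$ has a positive density, $F_X$ is continuous and strictly increasing, so $F_X(\text{VaR}^{\alpha}(X)) = \alpha$ exactly. Applying the CDF bound at $t = \text{VaR}^{\alpha}(X)$ then gives
\begin{equation*}
\alpha - \epsilon \;\le\; F_Y(\text{VaR}^{\alpha}(X)) \;\le\; \alpha + \epsilon.
\end{equation*}

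The second step uses the normality of $Y$: $F_Y(t) = \Phi((t-c)/\sigma)$ is continuous and strictly increasing, so $\Phi^{-1}$ applied to the displayed inequalities yields
\begin{equation*}
c + \sigma \Phi^{-1}(\alpha - \epsilon) \;\le\; \text{VaR}^{\alpha}(X) \;\le\; c + \sigma \Phi^{-1}(\alpha + \epsilon).
\end{equation*}
Subtracting $\text{VaR}^{\alpha}(Y) = c + \sigma \Phi^{-1}(\alpha)$ gives
\begin{equation*}
-\sigma\bigl(\Phi^{-1}(\alpha) - \Phi^{-1}(\alpha - \epsilon)\bigr) \;\le\; \text{VaR}^{\alpha}(X) - \text{VaR}^{\alpha}(Y) \;\le\; \sigma\bigl(\Phi^{-1}(\alpha + \epsilon) - \Phi^{-1}(\alpha)\bigr),
\end{equation*}
and taking absolute values yields the claimed bound. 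The restriction $\epsilon \in (0, \min\{\alpha, 1-\alpha\})$ is precisely what is needed so that $\alpha \pm \epsilon \in (0,1)$ and $\Phi^{-1}$ is defined at both endpoints.

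There is no real obstacle here; the proof is essentially a one-line chain of inequalities once one observes that total variation dominates the Kolmogorov distance and that $F_X$ attains the value $\alpha$ exactly at $\text{VaR}^{\alpha}(X)$. The positive-density assumption on $X$ is used only to guarantee this exact equality (otherwise one would have to handle a possible jump at the quantile, which would introduce additional slack). The normality of $Y$ is used only to get the clean inverse-CDF expression; the argument would go through for any continuous, strictly increasing reference distribution, with $\Phi^{-1}$ replaced by the corresponding quantile function.
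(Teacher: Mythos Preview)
Your proof is correct and follows essentially the same approach as the paper: both arguments use the total variation bound on half-lines to compare CDFs, arrive at the same sandwich $\text{VaR}^{\alpha-\epsilon}(Y) \le \text{VaR}^\alpha(X) \le \text{VaR}^{\alpha+\epsilon}(Y)$, and then read off the explicit normal quantiles. The only cosmetic difference is that the paper evaluates $F_X$ at the points $\text{VaR}^{\alpha\pm\epsilon}(Y)$, whereas you evaluate $F_Y$ at the point $\text{VaR}^\alpha(X)$; these are dual ways of obtaining the same inequality.
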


\begin{proof}
Note that $\text{VaR}^\alpha(Y) = c + \sigma \Phi^{-1}(\alpha)$. Since $\|\mathbb{P}_X - \mathbb{P}_Y\|_{\text{TV}} \le \epsilon$, we have
\begin{align*}
&\mathbb{P}(X \le \text{VaR}_{\alpha+\epsilon}(Y)) \ge \mathbb{P}(Y \le \text{VaR}_{\alpha+\epsilon}(Y)) - \epsilon =\alpha + \epsilon - \epsilon = \alpha,\\
&\mathbb{P}(X \le \text{VaR}_{\alpha-\epsilon}(Y)) \le \mathbb{P}(Y \le \text{VaR}_{\alpha-\epsilon}(Y)) + \epsilon =\alpha - \epsilon + \epsilon = \alpha.
\end{align*}
Hence $\text{VaR}^{\alpha-\epsilon}(Y) \le \text{VaR}^\alpha(X) \le \text{VaR}^{\alpha+\epsilon}(Y)$, and the result follows from the closed forms of $\text{VaR}^{\alpha \pm \epsilon}(Y)$.
\end{proof}

\begin{theorem} \label{thm4.8}
Suppose that \cref{assump3.1} and \cref{assump4.1} hold, and $H$ is differentiable at $\theta^c$. Also assume for $\theta \sim \mathbb{P}_n$ that $\sqrt{n}(\theta - \theta^c)$ and $\sqrt{n}[H(\theta) - H(\theta^c)]$ have positive densities for all $n$ a.s. $(\mathbb{P}^{\N}_{\theta^c})$. Then for any $\alpha \in (0,1)$,
\begin{equation*}
\sqrt{n}\left\{\text{VaR}_{\mathbb{P}_n}^\alpha[H(\theta)] - H(\theta^c) \right\} \Rightarrow \mathcal{N}(\sigma \Phi^{-1}(\alpha), \sigma_x^2).
\end{equation*}
\end{theorem}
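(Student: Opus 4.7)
The plan is to follow faithfully the three-step heuristic laid out immediately before the statement, using \cref{lemma4.5}, \cref{lemma4.6}, and \cref{lemma4.7} to close the three gaps in turn. By translation invariance and positive homogeneity of VaR,
\[
\sqrt{n}\bigl\{\text{VaR}^\alpha_{\mathbb{P}_n}[H(\theta)] - H(\theta^c)\bigr\} = \text{VaR}^\alpha_{\mathbb{P}_n}\bigl\{\sqrt{n}\,[H(\theta) - H(\theta^c)]\bigr\}.
\]
First-order Taylor expansion around $\theta^c$ decomposes the random variable inside the VaR as $X_n(\theta) + Y_n(\theta)$, with $X_n := \nabla H(\theta^c)^\intercal Z_n$, $Y_n := e(\theta)\|Z_n\|$, $Z_n = \sqrt{n}(\theta - \theta^c)$, and $e(\theta) \to 0$ as $\theta \to \theta^c$. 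Following the proof of \cref{thm4.4}, I set $e(\theta^c) = 0$ so that $e$ is bounded and continuous on the compact set $\Theta$.

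Next I would invoke \cref{lemma4.5} to sandwich $\text{VaR}^\alpha(X_n + Y_n)$ between $\text{VaR}^{\alpha \pm \epsilon_n}(X_n) \pm \delta_n$, which requires $\mathbb{P}_n(|Y_n|>\delta_n)<\epsilon_n$. Markov's inequality and H\"older give
\[
\mathbb{P}_n(|Y_n| > \delta_n) \le \frac{1}{\delta_n}\bigl(\E_{\mathbb{P}_n}[|e(\theta)|^{(1+\gamma)/\gamma}]\bigr)^{\gamma/(1+\gamma)}\bigl(\E_{\mathbb{P}_n}[\|Z_n\|^{1+\gamma}]\bigr)^{1/(1+\gamma)},
\]
where the first factor tends to zero almost surely by \cref{lemma3.3} and \cref{def3.2}, while \cref{assump4.1} keeps the second bounded in $\mathbb{P}^{\N}_{\theta^c}$-probability. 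A diagonal argument then picks $\delta_n, \epsilon_n \downarrow 0$ slowly enough that the hypothesis of \cref{lemma4.5} holds with $\mathbb{P}^{\N}_{\theta^c}$-probability tending to one.

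For the linear part, because $z \mapsto \nabla H(\theta^c)^\intercal z$ is measurable, \cref{lemma4.6} combined with \cref{lemma4.2} gives
\[
\bigl\|\mathbb{P}_n \circ X_n^{-1} - \mathcal{N}(\nabla H(\theta^c)^\intercal \Delta_n,\sigma_x^2)\bigr\|_{\text{TV}} \le \bigl\|\mathbb{P}_{Z_n} - \mathcal{N}(\Delta_n,[I(\theta^c)]^{-1})\bigr\|_{\text{TV}} \to 0
\]
in $\mathbb{P}^{\N}_{\theta^c}$-probability. Applying \cref{lemma4.7} with $Y \sim \mathcal{N}(\nabla H(\theta^c)^\intercal \Delta_n, \sigma_x^2)$ and tolerance $\epsilon_n$ yields
\[
\bigl|\text{VaR}^\alpha(X_n) - \nabla H(\theta^c)^\intercal \Delta_n - \sigma_x \Phi^{-1}(\alpha)\bigr| \le \sigma_x\bigl[\Phi^{-1}(\alpha+\epsilon_n) - \Phi^{-1}(\alpha-\epsilon_n)\bigr] = o_p(1)
\]
by continuity of $\Phi^{-1}$. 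The positive-density hypotheses on $Z_n$ and on $\sqrt{n}[H(\theta)-H(\theta^c)]$ are exactly what license \cref{lemma4.5} and \cref{lemma4.7}.

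Combining the sandwich bound with the expansion of $\text{VaR}^\alpha(X_n)$ gives
\[
\text{VaR}^\alpha_{\mathbb{P}_n}\bigl\{\sqrt{n}[H(\theta) - H(\theta^c)]\bigr\} = \nabla H(\theta^c)^\intercal \Delta_n + \sigma_x \Phi^{-1}(\alpha) + o_p(1),
\]
and since $\Delta_n \Rightarrow \mathcal{N}(0,[I(\theta^c)]^{-1})$ by \cref{lemma4.2}, the continuous mapping theorem gives $\nabla H(\theta^c)^\intercal \Delta_n \Rightarrow \mathcal{N}(0,\sigma_x^2)$, and Slutsky's theorem delivers the stated weak limit $\mathcal{N}(\sigma_x \Phi^{-1}(\alpha), \sigma_x^2)$. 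The main obstacle I anticipate is coordinating three vanishing error scales simultaneously: $\delta_n$ must be small enough that the remainder contribution disappears, $\epsilon_n$ must be small enough that the quantile-continuity bound in \cref{lemma4.7} vanishes, yet the pair must still satisfy the probability constraint $\mathbb{P}_n(|Y_n|>\delta_n)<\epsilon_n$; moreover, the entire chain of in-probability TV statements has to be synchronized with the weak convergence of the \emph{random} centering $\nabla H(\theta^c)^\intercal \Delta_n$, since the target Gaussian in \cref{lemma4.7} is not a fixed measure.
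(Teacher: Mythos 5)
Your proposal is correct and follows essentially the same route as the paper's proof: the identical Taylor decomposition, with \cref{lemma4.5} controlling the remainder inside the VaR, \cref{lemma4.6} transferring the Bernstein--von Mises total-variation convergence to the linearized variable, and \cref{lemma4.7} plus continuity of $\Phi^{-1}$ closing the gap to $\text{VaR}^\alpha(Y_n)=\nabla H(\theta^c)^\intercal\Delta_n+\sigma_x\Phi^{-1}(\alpha)$. The only differences are organizational --- you bound $\mathbb{P}_n(|e(\theta)|\,\|Z_n\|>\delta)$ via Markov and H\"older where the paper uses a union bound over the two factors, and you coordinate the vanishing tolerances by a diagonal argument where the paper fixes $\delta,\epsilon$ and intersects explicit high-probability events --- neither of which changes the substance.
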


\begin{proof}
Since $x$ is fixed, we write $\sigma_x^2$ as $\sigma^2$ for short. Let $Y_n$ denote a random variable with distribution $\mathcal{N}(\nabla H(\theta^c)^\intercal \Delta_n, \sigma^2)$. Our goal is to show that for any $\delta, \epsilon > 0$, there exists $N>0$ such that
\begin{equation} \label{eq4.6}
\mathbb{P}^{\N}_{\theta^c} \left\{\big|\sqrt{n} \left\{\text{VaR}^\alpha_{\mathbb{P}_n}[H(\theta)] - H(\theta^c) \right\}  - \text{VaR}^\alpha(Y_n)\big| > \delta\right\} < \epsilon, \quad \forall n > N.
\end{equation}
By the positive homogeneity and translation invariance of VaR,
\begin{equation*}
\sqrt{n} \left\{\text{VaR}^\alpha_{\mathbb{P}_n}[H(\theta)] - H(\theta^c) \right\} = \text{VaR}^\alpha_{\mathbb{P}_n} \left\{\sqrt{n}[H(\theta) - H(\theta^c)] \right\},
\end{equation*}
which, by first-order Taylor expansion, is equal to
\begin{equation*}
\text{VaR}^\alpha_{\mathbb{P}_n} \left\{X_n(\theta) + e(\theta) \|\sqrt{n}(\theta - \theta^c)\| \right\},
\end{equation*}
where $X_n(\theta):= \nabla H(\theta^c)^\intercal [\sqrt{n}(\theta - \theta^c)]$ and $e(\theta) \rightarrow 0$ if $\theta \rightarrow \theta^c$. Let $\mathbb{P}_{X_n} := \mathbb{P}_n \circ X_n^{-1}$. To show \cref{eq4.6}, we fix a $\delta > 0$ and an $\epsilon > 0$. Note that since $Y_n$ is a normal random variable, there exists an $\eta \in (0, \min\{\alpha, 1-\alpha\})$ such that for all $n$,
\begin{equation} \label{eq4.7}
\big|\text{VaR}^\alpha (Y_n) - \text{VaR}^{\alpha'} (Y_n)\big| = \sigma \big|\Phi^{-1}(\alpha) - \Phi^{-1}(\alpha') \big| < \delta / 3 \quad \text{for} \quad \alpha' = \alpha \pm \eta.
\end{equation}
The rest of the proof is based on constructing the following events.
\begin{enumerate}
\item[(i)]
By the assumption stated in \cref{thm4.8}, we can find an event $E_1 \in \mathbb{B}^{\N}_{\Xi}$ with $\mathbb{P}^{\N}_{\theta^c}(E_1) = 1$ such that on $E_1$, if $\theta \sim \mathbb{P}_n$, then both $\sqrt{n}(\theta - \theta^c)$ and $\sqrt{n}[H(\theta) - H(\theta^c)]$ have positive densities for all $n$.
\item[(ii)]
Since $\E_{\mathbb{P}_n}[\|\sqrt{n}(\theta - \theta^c)\|]$ is bounded in probability ($\mathbb{P}^{\N}_{\theta^c}$) by \cref{assump4.1}, there exists $M_\epsilon > 0$ such that
\begin{equation*}
\mathbb{P}^{\N}_{\theta^c} \left\{\E_{\mathbb{P}_n}[\|\sqrt{n}(\theta -\theta^c)\|] > M_\epsilon \right\} < \epsilon / 3, \quad \forall n.
\end{equation*}
Let $E_{2,n} : = \left\{\E_{\mathbb{P}_n}\|\sqrt{n}(\theta -\theta^c)\|] \le M_\epsilon \right\}$. There exists $M_1 > 0$ on $E_{2,n}$ such that
\begin{equation} \label{eq4.8}
\mathbb{P}_n (\|\sqrt{n}(\theta -\theta^c)\| > M_1) < \eta /2, \quad \forall n
\end{equation}
by Markov's inequality. In addition, from the strong consistency of $\mathbb{P}_n$ and the continuity of $e(\cdot)$, we have
\begin{equation*}
\mathbb{P}_n \left\{|e(\theta)| > \delta / (3M_1) \right\} \rightarrow 0 \quad \text{in probability } (\mathbb{P^{\N}_{\theta^c}}).
\end{equation*}
Therefore, there exists $N_1>0$ such that
\begin{equation} \label{eq4.9}
\mathbb{P}^{\N}_{\theta^c} \left\{ \mathbb{P}_n(|e(\theta)| > \delta / (3M_1)) < \eta / 2\right\}  \ge 1 - \epsilon/3, \quad \forall n > N_1.
\end{equation}
\item[(iii)]
Define $E_{3,n} := \left\{\mathbb{P}_n(|e(\theta)| > \delta / (3M_1)) < \eta / 2 \right\}$ as in the LHS of \cref{eq4.9}. Then, on event $E_{2,n} \cap E_{3,n}$, we have by \cref{eq4.8} that
\begin{align}
&\mathbb{P}_n \left(|e(\theta)|\cdot \|\sqrt{n}(\theta - \theta^c)\| > \delta/3 \right) \notag \\
 \le &\:\mathbb{P}_n \left(\{|e(\theta)| > \delta/(3M_1)\}\cup \{\|\sqrt{n}(\theta - \theta^c)\| > M_1\} \right) < \frac{\eta}{2} +\frac{\eta}{2} = \eta \label{eq4.10}
\end{align}
for all $n > N_1$. 
\item[(iv)]
By \cref{lemma4.7} and the continuity of $\Phi^{-1}$, we can find $\epsilon_1>0$ such that on $E_1$, if $\|\mathbb{P}_{X_n} - \mathbb{P}_{Y_n}\|_{\text{TV}}  \le \epsilon_1$, then
\begin{equation} \label{eq4.11}
\big| \text{VaR}^{\alpha'}_{\mathbb{P}_n}(X_n(\theta)) - \text{VaR}^{\alpha'}(Y_n) \big| < \delta/3 \quad \text{for } \quad \alpha' = \alpha \pm \eta. 
\end{equation}
Meanwhile, since $\|\mathbb{P}_{X_n} - \mathbb{P}_{Y_n}\|_{\text{TV}} \rightarrow 0$ in probability ($\mathbb{P}^{\N}_{\theta^c}$) by \cref{lemma4.6}, there exists $N_2>0 $ such that for the event $E_{4,n} := \left\{\|\mathbb{P}_{X_n} - \mathbb{P}_{Y_n}\|_{\text{TV}} \le \epsilon_1 \right\}$, $\mathbb{P}^{\N}_{\theta^c}(E_{4,n}) \ge 1 - \epsilon/3$ for all $n > N_2$.
\end{enumerate}

Now consider the event $E_n := E_1 \cap E_{2,n} \cap E_{3,n} \cap E_{4,n}$. Take $N:= \max\{N_1, N_2\}$. By a union bound we have $\mathbb{P^{\N}_{\theta^c}}(E_n) \ge 1 - \epsilon/3 - \epsilon/3 - \epsilon/3 = 1 - \epsilon$ for all $n > N$. Moreover, on $E_n$ we have by the definition of $E_1$,  \cref{eq4.10} and \cref{lemma4.5} that,
\begin{equation*}
\text{VaR}_{\mathbb{P}_n}^{\alpha - \eta} (X_n(\theta)) - \frac{\delta}{3} \le \text{VaR}^\alpha_{\mathbb{P}_n}\left\{\sqrt{n}[H(\theta) - H(\theta^c)] \right\} \le \text{VaR}_{\mathbb{P}_n}^{\alpha + \eta} (X_n(\theta)) + \frac{\delta}{3},
\end{equation*}
where by \cref{eq4.11}, 
\begin{equation*}
\text{VaR}^{\alpha - \eta} (Y_n) - \frac{2\delta}{3} \le \text{VaR}^\alpha_{\mathbb{P}_n}\left\{\sqrt{n}[H(\theta) - H(\theta^c)] \right\} \le \text{VaR}^{\alpha + \eta} (Y_n) + \frac{2\delta}{3},
\end{equation*}
and finally by \cref{eq4.7},
\begin{equation*}
\text{VaR}^{\alpha} (Y_n) - \delta \le \text{VaR}^\alpha_{\mathbb{P}_n}\left\{\sqrt{n}[H(\theta) - H(\theta^c)] \right\} \le \text{VaR}^{\alpha } (Y_n) + \delta,
\end{equation*}
which holds for all $n > N$. So \cref{eq4.6} is proved. The conclusion follows from the fact that $\text{VaR}^\alpha(Y_n) = \nabla H(\theta^c)^\intercal \Delta_n + \sigma \Phi^{-1}(\alpha)$ and $\Delta_n \Rightarrow \mathcal{N}(0, [I(\theta^c)]^{-1})$.
\end{proof}

\begin{remark}
Since VaR is not a linear functional of random variables, the remainder term in the Taylor expansion cannot be taken directly outside VaR. Instead, we use \cref{lemma4.5} to control the error caused by ignoring the remainder term. 
\end{remark}

\begin{remark}
Although VaR is not uniformly continuous relative to the total variation metric, the limiting distribution $\mathcal{N}(\Delta_n, [I(\theta^c)]^{-1})$ only varies due to $\Delta_n$, which is a location parameter. This allows us to show in \cref{lemma4.7} that convergence in total variation distance does imply convergence of VaR in the current situation.
\end{remark}

The proof for VaR demonstrates the importance and effectiveness of exploiting the structure of the limiting distribution $\mathcal{N}(\Delta_n, [I(\theta^c)]^{-1})$. In the upcoming proof for CVaR, we continue such exploitation by observing the following properties.

\begin{lemma} \label{lemma4.9}
Suppose $X_n\sim \mathcal{N}(c_n, \sigma^2)$ and there is a constant $ C>0$ such that $|c_n| < C$ for all $n$. Then for any $\epsilon>0$, there exists $M_{C, \epsilon}>0$ such that 
$$\E\left[|X_n| \mathbbm{1}_{\{|X_n|>M_{C, \epsilon}\}} \right] < \epsilon, \quad \forall n.$$ 
\end{lemma}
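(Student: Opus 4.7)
The plan is to reduce the uniform-in-$n$ tail estimate to a dominated-convergence argument controlled by a single integrable envelope. Since $\E[|X_n|\mathbbm{1}_{\{|X_n|>M\}}]$ depends only on the law of $X_n$, I may couple all of the $X_n$ to a single standard normal $Z \sim \mathcal{N}(0,1)$ by writing $X_n \stackrel{d}{=} c_n + \sigma Z$, which only requires working on a common probability space carrying $Z$.

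Given this representation, the triangle inequality together with the uniform bound $|c_n| < C$ yields $|X_n| \le C + \sigma |Z|$. In particular, for any $M > C$,
\begin{equation*}
\{|X_n| > M\} \subseteq \{C + \sigma|Z| > M\} = \bigl\{|Z| > (M-C)/\sigma\bigr\},
\end{equation*}
so that
\begin{equation*}
\E\bigl[|X_n|\mathbbm{1}_{\{|X_n|>M\}}\bigr] \le \E\Bigl[(C+\sigma|Z|)\,\mathbbm{1}_{\{|Z|>(M-C)/\sigma\}}\Bigr].
\end{equation*}
The key point is that the right-hand side is a single quantity that no longer depends on $n$.

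Since $C + \sigma|Z|$ is integrable (the half-normal has finite mean), and $\mathbbm{1}_{\{|Z|>(M-C)/\sigma\}} \to 0$ pointwise as $M \to \infty$, the dominated convergence theorem shows that this right-hand side vanishes as $M \to \infty$. Hence, given $\epsilon > 0$, we may pick $M_{C,\epsilon}$ large enough so that the bound is below $\epsilon$, and this choice works uniformly in $n$.

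I do not expect any real obstacle here; the entire content of the lemma is the observation that the hypothesis $|c_n| < C$ upgrades ``each $X_n$ is integrable'' into ``the family $\{X_n\}$ is dominated by one integrable random variable,'' which is strictly stronger than uniform integrability and makes the uniform tail decay automatic.
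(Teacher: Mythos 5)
Your proof is correct and follows essentially the same idea as the paper's: dominate the whole family $\{X_n\}$ by a single integrable normal-type envelope whose truncated tail expectation vanishes as the threshold grows. Your coupling $X_n \stackrel{d}{=} c_n + \sigma Z$ makes the domination explicit and is in fact more carefully spelled out than the paper's two-line sketch, which leaves the verification to the reader.
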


\begin{proof}
Let $Z \sim \mathcal{N}(C, \sigma^2)$, then there exists $M>0$ such that $\E[Z \mathbbm{1}_{\{Z>M\}}] < \epsilon/2$. It can be verified that this $M$ corresponds to the $\epsilon$ in the lemma.
\end{proof}

\begin{lemma} \label{lemma4.10}
Suppose $X_n \sim \mathcal{N}(c_n, \sigma^2)$ and there is a constant $C>0$ such that $|c_n|<C$ for all $n$. Then for a fixed $\alpha \in (0,1)$ and for any $\epsilon>0$, there exists $\delta_C>0$ such that
\begin{equation*}
\E\left[|X_n| \mathbbm{1}_{\{v_x^n - \delta_C \le X_n \le v_x^n + \delta_C\}}\right] < \epsilon, \quad \forall n,
\end{equation*}
where $v_x^n := \text{VaR}^\alpha(X_n)$.
\end{lemma}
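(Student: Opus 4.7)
The plan is to exploit the simple closed form of the normal quantile and translate the problem to a computation on the standard normal. Since $X_n \sim \mathcal{N}(c_n, \sigma^2)$, we have $v_x^n = c_n + \sigma \Phi^{-1}(\alpha)$, so $|v_x^n| \le C + \sigma |\Phi^{-1}(\alpha)| =: K$, a bound uniform in $n$. On the event $\{v_x^n - \delta_C \le X_n \le v_x^n + \delta_C\}$ one therefore has $|X_n| \le K + \delta_C$ deterministically and uniformly in $n$, so the integrand is easy to control pointwise.

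Next I would bound the probability of the window. Standardizing $X_n$ by subtracting $c_n$ and dividing by $\sigma$,
\begin{equation*}
\mathbb{P}(v_x^n - \delta_C \le X_n \le v_x^n + \delta_C) = \Phi\!\left(\Phi^{-1}(\alpha) + \delta_C/\sigma\right) - \Phi\!\left(\Phi^{-1}(\alpha) - \delta_C/\sigma\right),
\end{equation*}
which is a continuous function of $\delta_C$ vanishing as $\delta_C \downarrow 0$ and, crucially, independent of $n$. Combining the two bounds gives
\begin{equation*}
\E\!\left[|X_n|\mathbbm{1}_{\{v_x^n - \delta_C \le X_n \le v_x^n + \delta_C\}}\right] \le (K + \delta_C)\left[\Phi\!\left(\Phi^{-1}(\alpha) + \delta_C/\sigma\right) - \Phi\!\left(\Phi^{-1}(\alpha) - \delta_C/\sigma\right)\right].
\end{equation*}

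The final step is to choose $\delta_C$ small enough to make the right-hand side less than $\epsilon$; this is possible because the bracketed factor tends to $0$ with $\delta_C$ while $K + \delta_C$ stays near $K$. There is no real obstacle here — the whole argument hinges on the observation that both the quantile $v_x^n$ and the shape of the window relative to the mean are controlled uniformly in $n$ thanks to the uniform bound on $c_n$, which reduces the estimate to a single computation on the standard normal.
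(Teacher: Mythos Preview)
Your proof is correct and follows essentially the same idea as the paper: exploit that $v_x^n - c_n = \sigma\Phi^{-1}(\alpha)$ is constant in $n$, so after centering the window probability is a single standard-normal computation independent of $n$. The paper argues via $X_n \overset{d}{=} Y + c_n$ with $Y\sim\mathcal{N}(0,\sigma^2)$ and the split $|X_n|\le |Y|+C$, whereas you bound $|X_n|$ on the window directly by $K+\delta_C$; both routes are equivalent and yours is arguably a touch cleaner.
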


\begin{proof}
Let $Y \sim \mathcal{N}(0, \sigma^2)$ and write $v_y:=\text{VaR}^\alpha(Y)$ for short. Then, for a given $\epsilon>0$, there exists $\delta>0$ such that
$$\E\left[|Y|\mathbbm{1}_{\{v_y - \delta \le Y \le v_y + \delta\}} \right] < \frac{\epsilon}{2}.$$
 Now make $\delta$ smaller (if necessary) such that
 $$\mathbb{P}(v_y - \delta \le Y \le v_y + \delta) < \frac{\epsilon}{2C}.$$
It can be verified that this $\delta$ corresponds to the $\epsilon$ in the lemma.
\end{proof}

\begin{theorem} \label{thm4.11}
Let \cref{assump3.1} and \cref{assump4.1} hold, and $H$ is differentiable at $\theta^c$. Also assume for $\theta \sim \mathbb{P}_n$ that $\sqrt{n}(\theta - \theta^c)$ and $\sqrt{n}[H(\theta) - H(\theta^c)]$ have positive densities for all $n$ a.s. $(\mathbb{P}^{\N}_{\theta^c})$. Then for any $\alpha \in (0,1)$,
\begin{equation*}
\sqrt{n} \left\{\text{CVaR}^\alpha_{\mathbb{P}_n} [H(\theta)] - H(\theta^c) \right\} \Rightarrow \mathcal{N}\left(\frac{\sigma}{1-\alpha}\phi(\Phi^{-1}(\alpha)), \sigma_x^2 \right).
\end{equation*}
\end{theorem}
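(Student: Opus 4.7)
My plan is to follow the three-step heuristic argument preceding the statement, adapted for CVaR, and to mirror the event-decomposition strategy used in the proof of \cref{thm4.8}. First, by the translation invariance and positive homogeneity of CVaR I can write $\sqrt{n}\{\text{CVaR}^\alpha_{\mathbb{P}_n}[H(\theta)] - H(\theta^c)\} = \text{CVaR}^\alpha_{\mathbb{P}_n}\{\sqrt{n}[H(\theta) - H(\theta^c)]\}$, and a first-order Taylor expansion at $\theta^c$ decomposes the integrand as $X_n(\theta) + R_n(\theta)$, where $X_n(\theta) := \nabla H(\theta^c)^\intercal \sqrt{n}(\theta - \theta^c)$ and $R_n(\theta) := e(\theta) \|\sqrt{n}(\theta - \theta^c)\|$ with $e(\theta) \to 0$ as $\theta \to \theta^c$. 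Since CVaR is a coherent risk measure, the second part of \cref{lemma3.8} gives $|\text{CVaR}^\alpha_{\mathbb{P}_n}(X_n + R_n) - \text{CVaR}^\alpha_{\mathbb{P}_n}(X_n)| \le \text{CVaR}^\alpha_{\mathbb{P}_n}(|R_n|) \le \frac{1}{1-\alpha}\E_{\mathbb{P}_n}[|R_n|]$, where the last step uses $|R_n| \ge 0$. A Hölder argument identical to the one in the proof of \cref{thm4.4}, combined with \cref{lemma3.3} applied to the bounded continuous extension of $|e|^{(1+\gamma)/\gamma}$ (with $e(\theta^c)=0$) and \cref{assump4.1}, shows that $\E_{\mathbb{P}_n}[|R_n|] \to 0$ in probability $(\mathbb{P}^{\N}_{\theta^c})$, thereby closing the Taylor gap.

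The core step is then to show that $\text{CVaR}^\alpha_{\mathbb{P}_n}[X_n(\theta)] - \text{CVaR}^\alpha(Y_n) \to 0$ in probability, where $Y_n \sim \mathcal{N}(\nabla H(\theta^c)^\intercal \Delta_n, \sigma_x^2)$. By \cref{lemma4.2} and \cref{lemma4.6}, $\|\mathbb{P}_{X_n} - \mathbb{P}_{Y_n}\|_{\text{TV}} \to 0$ in probability; moreover, since $\Delta_n$ converges in distribution, $|\nabla H(\theta^c)^\intercal \Delta_n|$ is bounded in probability uniformly in $n$. Under the positive-density assumption, I can write both CVaRs as $\frac{1}{1-\alpha} \int z\, \mathbbm{1}_{\{z \ge v\}}\, d\mathbb{P}$ and decompose the difference into (A) a change from shifting the VaR threshold $v_{X_n} := \text{VaR}^\alpha_{\mathbb{P}_n}(X_n)$ to $v_{Y_n} := \text{VaR}^\alpha(Y_n)$ while keeping the measure $\mathbb{P}_{X_n}$, and (B) a change from $\mathbb{P}_{X_n}$ to $\mathbb{P}_{Y_n}$ while keeping the level set $\{z \ge v_{Y_n}\}$. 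For (A), \cref{lemma4.7} makes $|v_{X_n} - v_{Y_n}|$ arbitrarily small once the total variation distance is small, and \cref{lemma4.10}, whose hypothesis is satisfied because the mean of $Y_n$ is bounded in probability, ensures the $|X_n|$-mass over any small window around $v_{Y_n}$ is uniformly negligible. For (B), I truncate at a large $M$: the bounded part contributes at most $\frac{2M}{1-\alpha}\|\mathbb{P}_{X_n} - \mathbb{P}_{Y_n}\|_{\text{TV}} \to 0$, while the tails $\E[|Y_n| \mathbbm{1}_{\{|Y_n| > M\}}]$ and $\E_{\mathbb{P}_n}[|X_n| \mathbbm{1}_{\{|X_n| > M\}}]$ are made small uniformly in $n$ by \cref{lemma4.9} and, via Markov, by $\E_{\mathbb{P}_n}[|X_n|^{1+\gamma}]/M^\gamma$ under \cref{assump4.1}, respectively. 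A union bound over the Bernstein-von Mises event, the moment event from \cref{assump4.1}, the strong consistency event from \cref{lemma3.3}, and the positive-density event then closes this gap.

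Finally, a direct calculation gives that CVaR of $\mathcal{N}(\mu, \sigma^2)$ equals $\mu + \frac{\sigma}{1-\alpha}\phi(\Phi^{-1}(\alpha))$, so $\text{CVaR}^\alpha(Y_n) = \nabla H(\theta^c)^\intercal \Delta_n + \frac{\sigma}{1-\alpha}\phi(\Phi^{-1}(\alpha))$, and $\nabla H(\theta^c)^\intercal \Delta_n \Rightarrow \mathcal{N}(0, \sigma_x^2)$ by \cref{lemma4.2}. Combining the two closed gaps with a Slutsky-type argument yields the stated weak limit. The principal obstacle is (B): because the CVaR integrand is unbounded, CVaR is not uniformly continuous in total variation, so the TV-bound from Bernstein-von Mises does not by itself transfer to CVaR. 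What saves us is the specific normal form of the Bayesian limit together with \cref{assump4.1}, which provides exactly the uniform tail control needed to truncate and convert the TV bound into a CVaR bound.
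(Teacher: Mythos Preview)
Your proposal is essentially correct and follows the same architecture as the paper's proof: Taylor expansion with the coherence bound $\text{CVaR}^\alpha_{\mathbb{P}_n}(|R_n|) \le \frac{1}{1-\alpha}\E_{\mathbb{P}_n}[|R_n|]$ to dispose of the remainder, then a truncation-plus-TV argument to compare $\text{CVaR}^\alpha_{\mathbb{P}_n}(X_n)$ with $\text{CVaR}^\alpha(Y_n)$, using \cref{lemma4.9} and \cref{assump4.1} for the tails, \cref{lemma4.7} for the VaR closeness, and \cref{lemma4.10} for the window term, followed by the normal CVaR closed form.

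One small point: your decomposition (A)/(B) reverses the paper's order. The paper keeps the threshold at $v_{X_n}$ and applies the window estimate to $Y_n$; you keep the threshold at $v_{Y_n}$ and apply the window estimate to $X_n$. But \cref{lemma4.10} is stated only for a normal $\mathcal{N}(c_n,\sigma^2)$, so it does not directly control ``the $|X_n|$-mass over a small window'' under $\mathbb{P}_{X_n}$. This is not fatal: on the window, $|z|$ is bounded (since $v_{Y_n}$ is bounded in probability), so you can transfer the window mass from $\mathbb{P}_{X_n}$ to $\mathbb{P}_{Y_n}$ via the TV bound and then invoke \cref{lemma4.10} for $Y_n$; alternatively, simply swap the order of your (A)/(B) so the window integral lives under the normal measure, which is exactly what the paper does. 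With that adjustment, your argument matches the paper's.
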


\begin{proof}
Write $\sigma^2_x$ as $\sigma^2$ for short. Let $X_n(\theta) : = \nabla H(\theta^c)^\intercal [\sqrt{n}(\theta - \theta^c)]$, $\mathbb{P}_{X_n} := \mathbb{P}_n \circ X_n^{-1}$, and let $Y_n$ denote a random variable with distribution $\mathcal{N}(\nabla H(\theta^c)^\intercal\Delta_{n}, \sigma^2)$. Also let $v_x^n := \text{VaR}^\alpha_{\mathbb{P}_n}(X_n(\theta))$ and $v_y^n := \text{VaR}^\alpha(Y_n)$. Note that CVaR is positive homogeneous and translation invariant. By Taylor expansion,
\begin{align*}
&\big|\text{CVaR}^\alpha_{\mathbb{P}_n}\left\{\sqrt{n}[H(\theta) - H(\theta^c)]  \right\} - \text{CVaR}^\alpha_{\mathbb{P}_n} (X_n(\theta)) \big| \\
\le& \: \text{CVaR}^\alpha_{\mathbb{P}_n} \left[|e(\theta)| \cdot \|\sqrt{n}(\theta - \theta^c) \| \right]
\le  \frac{1}{1-\alpha} \E_{\mathbb{P}_n}\left[|e(\theta)| \cdot \|\sqrt{n} (\theta - \theta^c) \| \right],
\end{align*}
which $\Rightarrow 0$ from the proof of \cref{thm4.4}. So it suffices to show that 
$$\text{CVaR}_{\mathbb{P}_n}^\alpha(X_n(\theta)) -  \text{CVaR}^\alpha(Y_n) \rightarrow 0 \quad \text{in probability } (\mathbb{P}^{\N}_{\theta^c}). $$
Fixing a $\delta>0$ and an $\epsilon>0$, we proceed by constructing the following events.
\begin{enumerate}
\item[(i)]
Since $\E[Y_n] = \Delta_{n}$ converges in distribution, it is bounded in probability ($\mathbb{P}^{\N}_{\theta^c}$), and thus for $\epsilon > 0$, there exists $M_1 >0$ such that for the event 
$$E_{1, n} : = \{|\E[Y_n]| \le M_1\},$$ 
$\mathbb{P}^{\N}_{\theta^c}(E_{1,n}) > 1- \epsilon / 4$ for all $n$. By \cref{lemma4.10}, on $E_{1,n}$ we have for $\delta>0$, there exists $\delta_{M_1}>0$ such that
\begin{equation} \label{eq4.12}
\E\left[ |Y_n| \mathbbm{1}_{\{v_y^n - \delta_{M_1} \le Y_n \le v_y^n + \delta_{M_1}\}} \right] < \frac{\delta}{3}.
\end{equation}
By \cref{lemma4.9}, we can find $M_2 > 0$ such that for all $n$, we have on $E_{1,n}$ that
\begin{equation} \label{eq4.13}
\E\left[|Y_n| \mathbbm{1}_{\{|Y_n| > M_2\}} \right] < \frac{\delta}{6}.
\end{equation}

\item[(ii)]
The proof of \cref{thm4.8} implies that $|v_x^n - v_y^n| \rightarrow 0$ in probability ($\mathbb{P}^{\N}_{\theta^c}$), thus we can find $N_1 > 0$ such that the event $E_{2, n} := \{|v_x^n -v_y^n| < \delta_{M_1}\}$ satisifies $\mathbb{P}^{\N}_{\theta^c}(E_{2,n}) > 1- \epsilon / 4$ for all $n > N_1$.

\item[(iii)]
Furthermore, since $\E_{\mathbb{P}_n}[\|\sqrt{n}(\theta - \theta^c)\|^{1+\gamma}]$ is bounded in probability $(\mathbb{P}^{\N}_{\theta^c})$ by \cref{assump4.1}, there exists $M_{\epsilon}>0$  such that
\begin{equation*}
\mathbb{P}^{\N}_{\theta^c}(\E_{\mathbb{P}_n}[\|\sqrt{n}(\theta - \theta^c)\|^{1+\gamma}] > M_\epsilon) < \epsilon/4, \quad \forall n.
\end{equation*}
Let $E_{3,n} := \{\E_{\mathbb{P}_n}\left[\|\sqrt{n}(\theta - \theta^c)\|^{1+\gamma}\right] \le M_{\epsilon}\}$, then by \cref{eqUI} we can find $M_3>0$ such that for all $n$, we have on $E_{3,n}$ that 
\begin{equation} \label{eq4.14}
\E_{\mathbb{P}_n}\left[|X_n(\theta)| \mathbbm{1}_{\{|X_n(\theta)|>M_3 \}} \right] < \frac{\delta}{6}.
\end{equation}

\item[(iv)]
Since $\|\mathbb{P}_{X_n} - \mathbb{P}_{Y_n}\|_{\text{TV}} \rightarrow 0$ in probability ($\mathbb{P}^{\N}_{\theta^c}$) by \cref{lemma4.6}, there exists $N_2>0$ such that for the event $E_{4,n}:=\left\{\|\mathbb{P}_{X_n} - \mathbb{P}_{Y_n}\|_{\text{TV}} \le \delta / 6M \right\}$, we have $\mathbb{P}^{\N}_{\theta^c}(E_{4,n}) > 1 - \epsilon / 4$ for all $n>N_2$. 
\end{enumerate}

Now consider $E_n := E_{1,n} \cap E_{2,n} \cap E_{3,n} \cap E_{4,n}$. Take $M= \max\{M_1, M_2, M_3\}$ and $N = \max\{ N_1, N_2\}$. By a union bound, $\mathbb{P}^{\N}_{\theta^c}(E_n) \ge 1 -\epsilon$ for all $n > N$. Assume without loss of generality that $v_x^n \ge v_y^n$. Then on $E_n$,
\begin{align*}
&\bigg| \E_{\mathbb{P}_n}\left[X_n(\theta) \mathbbm{1}_{\{X_n(\theta) \ge v_x^n\}} \right]  - \E\left[Y_n \mathbbm{1}_{\{Y_n \ge v_y^n\}} \right]\bigg|\\
& \le \underbrace{\bigg| \E_{\mathbb{P}_n}\left[X_n(\theta) \mathbbm{1}_{\{X_n(\theta) \ge v_x^n\}} \right]  - \E\left[Y_n \mathbbm{1}_{\{Y_n \ge v_x^n\}} \right]\bigg|}_{(*)} + \underbrace{\bigg|\E\left[Y_n \mathbbm{1}_{\{v_y^n \le Y_n < v_x^n\}} \right] \bigg|}_{(**)}.
\end{align*}
Note that since $|v_x^n - v_y^n| < \delta_{M_1}$, $(**) \le \delta /3$ by \cref{eq4.12}. Further increase $M$ if necessary so that $M \ge v_x^n$, and for $(*)$ we have
\begin{align*}
\E_{\mathbb{P}_n}\left[X_n(\theta) \mathbbm{1}_{\{X_n(\theta) \ge v_x^n\}} \right] &= \underbrace{ \E_{\mathbb{P}_n}\left[X_n(\theta) \mathbbm{1}_{\{X_n(\theta) > M\}} \right] }_{(\dagger)} + \underbrace{\E_{\mathbb{P}_n}\left[X_n(\theta) \mathbbm{1}_{\{v_x^n \le X_n(\theta) \le M \}} \right]}_{(***)},\\
\E\left[Y_n \mathbbm{1}_{\{Y_n \ge v_x^n\}} \right] &= \underbrace{ \E\left[Y_n \mathbbm{1}_{\{Y_n > M\}} \right]}_{(\dagger\dagger)} + \underbrace{\E\left[Y_n \mathbbm{1}_{\{v_x^n \le Y_n \le M \}} \right]}_{(****)},
\end{align*}
where $|(\dagger)| < \delta/6$ by \cref{eq4.14} and $|(\dagger\dagger)|<\delta / 6$ by \cref{eq4.13}. Define $X^+ := \max(X, 0)$ and $X^- := -\min(X, 0)$, and we have
\begin{align*}
&(***) = \E_{\mathbb{P}_n}\left[X_n^+(\theta) \mathbbm{1}_{\{v_x^n \le X_n(\theta) \le M\}} \right] - \E_{\mathbb{P}_n}\left[X_n^-(\theta) \mathbbm{1}_{\{v_x^n \le X_n(\theta) \le M\}} \right]\\
& = \int_0^\infty \mathbb{P}_n \left(X_n^+(\theta) \mathbbm{1}_{\{v_x^n \le X_n(\theta) \le M\}} > t\right) dt -  \int_0^\infty \mathbb{P}_n\left(X_n^-(\theta) \mathbbm{1}_{\{v_x^n \le X_n(\theta) \le M\}} > t\right) dt\\
& = \int_0^M \mathbbm{P}_n\left(v_x^n \le X_n(\theta) \le M, X_n(\theta) > t\right) dt \\
&\quad - \int_0^M \mathbb{P}_n\left(v_x^n \le X_n(\theta) \le M, X_n(\theta)<-t\right)dt,
\end{align*}
and similarly, $(****)$ can be expressed by
\begin{equation*}
\int_0^M \mathbbm{P}\left(v_x^n \le Y_n \le M, Y_n > t\right) dt - \int_0^M \mathbb{P}\left(v_x^n \le Y_n \le M, Y_n<-t\right)dt.
\end{equation*}
It follows from $\|\mathbb{P}_{X_n} - \mathbb{P}_{Y_n}\|_{\text{TV}} \le \delta / 6M$ that
$$|(***)- (****)| \le M \frac{\delta}{6M} + M \frac{\delta}{6M} = \frac{\delta}{3}.$$
In sum, we have
$$(*) + (**) \le \frac{\delta}{6} + \frac{\delta}{6} + \frac{\delta}{3} + \frac{\delta}{3} = \delta,$$
thus,
$$\mathbb{P}^{\N}_{\theta^c} \left\{\bigg| \E_{\mathbb{P}_n}\left[X_n(\theta) \mathbbm{1}_{\{X_n(\theta) \ge v_x^n\}} \right]  - \E\left[Y_n \mathbbm{1}_{\{Y_n \ge v_y^n\}} \right]\bigg| \le \delta \right\} \ge 1 -\epsilon, \quad \forall n > N,$$
which implies that $\text{CVaR}^\alpha_{\mathbb{P}_n}(X_n(\theta)) - \text{CVaR}^\alpha(Y_n) \rightarrow 0$ in probability ($\mathbb{P}^{\N}_{\theta^c}$). We now conclude that
$$\text{CVaR}_{\mathbb{P}_n}^\alpha\left\{\sqrt{n}[H(\theta) - H(\theta^c)]  \right\} - \text{CVaR}^\alpha (Y_n) \rightarrow 0 \text{ in probability } (\mathbb{P}^{\N}_{\theta^c}).$$
But
\begin{equation*}
\text{CVaR}^\alpha(Y_n) = \nabla H(\theta^c)^\intercal \Delta_{n} + \frac{\sigma}{1-\alpha} \phi(\Phi^{-1}(\alpha)) \Rightarrow \mathcal{N}\left(\frac{\sigma}{1-\alpha}\phi(\Phi^{-1}(\alpha)), \sigma^2 \right),
\end{equation*}
so the proof is complete.
\end{proof}

\begin{remark}
Our proof for CVaR relies on the proof for VaR (see the construction of $E_{2,n}$). Moreover, from the construction of $E_{3,n}$ and \cref{eq4.14} we see that \cref{assump4.1} is critical to bounding the truncated tail expectation of $X_n(\theta)$. This is not surprising since \cref{assump4.1} essentially characterizes  a form of uniform integrability, which is a well-known sufficient condition for bridging the gap between convergence in total variation (or weak convergence) and convergence of expectations. 
\end{remark}

\subsection{Asymptotic normality of optimal values} \label{sec:4.2}
The goal of this section is to establish asymptotic normality of the optimal values
\begin{equation} \label{eq4.16}
\sqrt{n}\left(\min_{x\in \mathcal{X}} \rho_{\, \mathbb{P}_n}[H(x, \theta)] - \min_{x\in \mathcal{X}} H(x, \theta^c) \right).
\end{equation}

Let $C(\mathcal{X})$ denote the Banach space of all continuous functions on a compact set $\mathcal{X}$ equipped with the sup-norm. Also let $\mathcal{C}_{\mathcal{X}}$ denote the Borel $\sigma$-algebra on $C(\mathcal{X})$. A random element \footnote{A random element is a generalization of the concept of random variable to more complicated spaces than $\R$.} is defined as a mapping from $(\Omega, \mathcal{F})$ to $(C(\mathcal{X}), \mathcal{C}_{\mathcal{X}})$, i.e., each realization of a random element is a continuous function in $C(\mathcal{X})$. \cref{def3.2} of weak convergence carries over to this space, except that one need to consider all bounded and continuous functionals on $C(\mathcal{X})$. In words, for $f_n, f \in C(\mathcal{X})$, $f_n \Rightarrow f$ characterizes the weak convergence of continuous random functions. Define
\begin{equation*}
g_n(x):= \sqrt{n}\left\{\rho_{\,\mathbb{P}_n}[H(x, \theta)] - H(x, \theta^c) \right\}.
\end{equation*}
To study the asymptotic distribution of \cref{eq4.16}, we will resort to the following result.
 \begin{lemma}[\cite{shapiro1991asymptotic}, Theorem 3.2] \label{lemma4.12}
 If $\sqrt{n}(f_n - \bar{f}) \Rightarrow Y_x$, where $f_n, \bar{f}$ and $Y_x$ are random elements of $C(\mathcal{X})$, then
\begin{equation} \label{eq4.100}
\sqrt{n}\left(\min_{x \in \mathcal{X}} f_n - \min_{x \in \mathcal{X}} \bar{f}\right) \Rightarrow \min_{x \in S} Y_x \quad \text{as }\: n \rightarrow \infty,
\end{equation}
where $S := \arg\min_{x \in \mathcal{X}} \bar{f}$. 
\end{lemma}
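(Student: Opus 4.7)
The plan is to apply the (extended) functional delta method in the Banach space $C(\mathcal{X})$. Define the min-value functional $T : C(\mathcal{X}) \to \R$ by $T(f) := \min_{x \in \mathcal{X}} f(x)$; this is well-defined because $\mathcal{X}$ is compact and $f$ is continuous, and the triangle inequality immediately yields $|T(f) - T(g)| \le \|f - g\|_\infty$, so $T$ is $1$-Lipschitz. The heart of the argument will be to show that $T$ is Hadamard directionally differentiable at $\bar{f}$ with derivative $T'_{\bar{f}}(h) = \min_{x \in S} h(x)$, after which the conclusion follows from a generalized delta theorem for Hadamard directionally differentiable maps.

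To verify directional differentiability, I take an arbitrary sequence $t_n \downarrow 0$ and $h_n \to h$ in $C(\mathcal{X})$ and show
\[
\frac{T(\bar{f} + t_n h_n) - T(\bar{f})}{t_n} \longrightarrow \min_{x \in S} h(x).
\]
The upper bound is the easy half: for any fixed $x^\ast \in S$, the inequality $T(\bar{f} + t_n h_n) \le \bar{f}(x^\ast) + t_n h_n(x^\ast) = T(\bar{f}) + t_n h_n(x^\ast)$ divided by $t_n$ gives $\limsup \le h(x^\ast)$, and minimizing over $S$ yields $\limsup \le \min_{x \in S} h(x)$.

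For the matching lower bound, choose any $x_n \in \arg\min_{x \in \mathcal{X}}(\bar{f} + t_n h_n)$, which exists by compactness of $\mathcal{X}$ and continuity of the integrand. Along an arbitrary subsequence, compactness extracts a further subsequence $x_{n_k} \to x^\ast$. Using $\sup_k \|h_{n_k}\|_\infty < \infty$ and $t_{n_k} \to 0$, I conclude $T(\bar{f}+t_{n_k}h_{n_k}) = \bar{f}(x_{n_k}) + t_{n_k} h_{n_k}(x_{n_k}) \to \bar{f}(x^\ast)$; combined with the upper bound $T(\bar{f}+t_{n_k}h_{n_k}) \to T(\bar{f})$, this forces $\bar{f}(x^\ast) = T(\bar{f})$, so $x^\ast \in S$. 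Then
\[
\frac{T(\bar{f} + t_{n_k} h_{n_k}) - T(\bar{f})}{t_{n_k}} = \underbrace{\frac{\bar{f}(x_{n_k}) - \bar{f}(x^\ast)}{t_{n_k}}}_{\ge 0} + h_{n_k}(x_{n_k}) \longrightarrow \text{something} \ge h(x^\ast) \ge \min_{x \in S} h(x),
\]
where the joint convergence $h_{n_k}(x_{n_k}) \to h(x^\ast)$ uses uniform convergence $h_n \to h$ together with continuity of $h$. A standard subsequence-of-subsequence argument then upgrades this to $\liminf \ge \min_{x \in S} h(x)$ for the original sequence.

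Finally, I invoke the extended delta method for Hadamard directionally differentiable functionals (Theorem 2.1 in Shapiro 1991, or Dümbgen's version): since $\sqrt{n}(f_n - \bar{f}) \Rightarrow Y_x$ in $C(\mathcal{X})$ and $T$ is Hadamard directionally differentiable at $\bar{f}$ with continuous (indeed sublinear) derivative $h \mapsto \min_{x \in S} h(x)$, it follows that
\[
\sqrt{n}\bigl(T(f_n) - T(\bar{f})\bigr) \Rightarrow T'_{\bar{f}}(Y_x) = \min_{x \in S} Y_x,
\]
which is exactly \cref{eq4.100}. The main obstacle is the lower bound above, specifically arguing that every subsequential limit of the perturbed minimizers $x_n$ must land in the true optimal set $S$; this is a robustness-of-argmin statement that relies essentially on compactness of $\mathcal{X}$ and the uniform-norm convergence of the perturbation $t_n h_n \to 0$. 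Note that $T$ is not Fr\'echet differentiable (only directionally so), which is why the generalized delta theorem, rather than its classical version, is needed.
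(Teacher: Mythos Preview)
Your proof is correct. The paper does not supply its own proof of this lemma but simply cites \cite{shapiro1991asymptotic}, Theorem 3.2; your argument---computing the Hadamard directional derivative of the min functional and invoking the extended delta method---is precisely the approach taken in that reference.
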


To apply \cref{lemma4.12}, we need to show that (i) $\rho_{\,\mathbb{P}_n}[H(\cdot, \theta)]$ and $H(\cdot, \theta^c)$ are continuous functions on $\mathcal{X}$; (ii) $g_n(\cdot)$ converges weakly to some random element of $C(\mathcal{X})$. In many applications involving empirical distributions (e.g., \cite{kleywegt2002sample,dentcheva2016statistical}), results similar to (ii) can be established using a functional Central Limit Theorem. However, this is not applicable to the Bayesian setting considered in this paper. Instead, we will prove (ii) via two steps. First, we show the weak convergence of $g_n$'s finite-dimensional distributions, i.e., the weak convergence of
$$[g_n(x_1), g_n(x_2), \ldots, g_n(x_k)]$$
for any finite sequence $x_1, x_2, \ldots x_k \in \mathcal{X}$. Then, by Theorem 7.5 in \cite{billingsley2013convergence}, the weak convergence of $g_n$ can be established by checking the following condition
\begin{equation} \label{eq4.18}
\lim_{\delta \rightarrow 0} \limsup_{n \rightarrow \infty} \mathbb{P}^{\N}_{\theta^c}\left(\zeta(g_n, \delta) \ge \epsilon \right) =0, \quad \forall \epsilon>0,
\end{equation}
where $\zeta(f, \delta)$ is the modulus of continuity of $f \in C(\mathcal{X})$ and is defined as
\begin{equation} \label{eq4.19}
\zeta(f, \delta) := \sup_{\|x- x'\|<\delta\atop x, x' \in \mathcal{X}}|f(x) - f(x')|.
\end{equation}
The condition in \cref{eq4.18} is also known as stochastic equicontinuity (s.e.). It guarantees the tightness of $g_n$'s distributions, which implies weak convergence essentially due to the Arzel\`a-Ascoli Theorem (see, e.g., \cite{billingsley2013convergence}, Theorem 7.2).  For more details on weak convergence in the $C$ space, we refer the reader to section 7 in \cite{billingsley2013convergence}.

\begin{theorem} \label{thm4.13}
Suppose that \cref{assump3.1} and \cref{assump4.1} hold. Also assume for $\theta \sim \mathbb{P}_n$ that $\sqrt{n}(\theta - \theta^c)$ and $\sqrt{n}[H(\theta) - H(\theta^c)]$ have positive densities for all $n$ a.s. $(\mathbb{P}^{\N}_{\theta^c})$. Further suppose $\mathcal{X}$ is a compact set, $H$ is continuous on $\mathcal{X} \times \Theta$, and $H(x, \cdot)$ differentiable at $\theta^c$ for all $x \in \mathcal{X}$, where $\nabla_\theta H(\cdot, \theta^c)$ is continuous on $\mathcal{X}$. Then,
\begin{equation*}
\sqrt{n}\left(\min_{x\in \mathcal{X}} \rho_{\,\mathbb{P}_n}[H(x, \theta)] - \min_{x\in \mathcal{X}} H(x, \theta^c) \right) \Rightarrow \min_{x \in S} Y_x,
\end{equation*}
where $S: =\arg\min_{x \in \mathcal{X}} H(x, \theta^c)$ and
\begin{equation*}
Y_x :=
\begin{cases}
\nabla_\theta H(x, \theta^c)^\intercal Z & \mbox{if } \rho=\text{mean / mean-variance}\\
\nabla_\theta H(x, \theta^c)^\intercal Z + \sigma_x \Phi^{-1}(\alpha) & \mbox{if } \rho = \text{VaR}\\
\nabla_\theta H(x, \theta^c)^\intercal Z + \frac{\phi(\Phi^{-1}(\alpha))}{1-\alpha} \sigma_x &\mbox{if } \rho = \text{CVaR}
\end{cases},
\end{equation*}
where $Z$ is a random variable following distribution $\mathcal{N}(0, [I(\theta^c)]^{-1})$.
\end{theorem}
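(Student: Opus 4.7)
The plan is to apply \cref{lemma4.12} with $f_n(\cdot)=\rho_{\,\mathbb{P}_n}[H(\cdot,\theta)]$ and $\bar f(\cdot)=H(\cdot,\theta^c)$, so that $\sqrt{n}(f_n-\bar f)=g_n$. For this I must verify three things: (i) both $f_n$ and $\bar f$ lie in $C(\mathcal{X})$; (ii) the finite-dimensional distributions of $g_n$ converge to those of $Y_\cdot$; (iii) $\{g_n\}$ is stochastically equicontinuous, i.e.\ \cref{eq4.18} holds. Item (i) is immediate: $\bar f$ is continuous by joint continuity of $H$, and continuity of $f_n$ follows exactly as in the proof of \cref{thm3.9}, using the continuity of $\nabla_\theta H(\cdot,\theta^c)$ on the compact $\mathcal{X}$ together with the posterior-based Lipschitz estimates for each of the four choices of $\rho$.

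For (ii), I would revisit the arguments of \cref{thm4.4}, \cref{thm4.8}, and \cref{thm4.11} in vector form via the Cram\'er--Wold device. The crucial observation is that all four risk functionals reduce, after the first-order Taylor expansion of $H$ around $\theta^c$, to operations on the single random vector $Z_n:=\sqrt{n}(\theta-\theta^c)$ whose posterior is approximated by $\mathcal{N}(\Delta_n,[I(\theta^c)]^{-1})$ via \cref{lemma4.2}. For any $x_1,\dots,x_k\in\mathcal{X}$ and $\lambda\in\R^k$, the linear combination $\sum_i \lambda_i g_n(x_i)$ decomposes as
\begin{equation*}
\sum_i \lambda_i\,\nabla_\theta H(x_i,\theta^c)^\intercal Z_n \;+\; \sum_i \lambda_i R_n(x_i) \;+\; \text{(risk-specific correction)},
\end{equation*}
and the pointwise machinery (H\"older for mean/mean-variance, \cref{lemma4.5}--\cref{lemma4.7} for VaR, \cref{lemma4.9}--\cref{lemma4.10} for CVaR) transfers verbatim to this linear combination, giving joint convergence to $\sum_i \lambda_i Y_{x_i}$. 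Because $Y_x$ is in each case a linear functional of the common Gaussian $Z\sim\mathcal{N}(0,[I(\theta^c)]^{-1})$ plus a deterministic shift depending continuously on $x$, the Cram\'er--Wold conclusion is precisely that the fidis of $g_n$ tend to those of $Y_\cdot$.

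The main obstacle is item (iii). I would control the modulus of continuity $\zeta(g_n,\delta)$ by splitting, for $\|x-x'\|<\delta$,
\begin{equation*}
g_n(x)-g_n(x')=\bigl[\nabla_\theta H(x,\theta^c)-\nabla_\theta H(x',\theta^c)\bigr]^\intercal \sqrt{n}\,\E_{\mathbb{P}_n}[\theta-\theta^c] + \Delta R_n(x,x') + \Delta T_n(x,x'),
\end{equation*}
where $\Delta R_n$ collects the Taylor remainders and $\Delta T_n$ the correction terms particular to VaR and CVaR. The leading term is handled by uniform continuity of $\nabla_\theta H(\cdot,\theta^c)$ on the compact $\mathcal{X}$, combined with the tightness of $\sqrt{n}\,\E_{\mathbb{P}_n}[\theta-\theta^c]$ from \cref{lemma4.3}. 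The remainder term is bounded via H\"older's inequality by a product of $\sup_{x}|e_x(\theta)|$-type factors, which vanish in posterior probability by the strong consistency of $\mathbb{P}_n$, and the $(1+\gamma)$-moment $\E_{\mathbb{P}_n}[\|Z_n\|^{1+\gamma}]$, which is tight by \cref{assump4.1}; the sup over $x$ can be absorbed because $\mathcal{X}$ is compact and the Taylor error is jointly continuous. For VaR, \cref{lemma4.7} bounds $|\Delta T_n|$ by $|\sigma_x-\sigma_{x'}|\,|\Phi^{-1}(\alpha)|$ plus a total-variation piece that vanishes in probability, and $x\mapsto\sigma_x$ is continuous on $\mathcal{X}$. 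For CVaR, the analogous control uses \cref{lemma4.9}--\cref{lemma4.10} applied to the parametrized Gaussian family $\mathcal{N}(\nabla_\theta H(x,\theta^c)^\intercal\Delta_n,\sigma_x^2)$, whose parameters depend continuously on $x$ and are bounded uniformly on $\mathcal{X}$ once $\Delta_n$ is confined to a compact set with high probability.

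Having established (i)--(iii), Theorem 7.5 of \cite{billingsley2013convergence} yields $g_n\Rightarrow Y_\cdot$ in $C(\mathcal{X})$, and then \cref{lemma4.12} applied to $f_n$ and $\bar f$ gives
\begin{equation*}
\sqrt{n}\Bigl(\min_{x\in\mathcal{X}}\rho_{\,\mathbb{P}_n}[H(x,\theta)]-\min_{x\in\mathcal{X}} H(x,\theta^c)\Bigr)\;\Rightarrow\;\min_{x\in S} Y_x,
\end{equation*}
which is the claim. I expect the stochastic equicontinuity verification to be the most delicate step, because the VaR and CVaR functionals are not uniformly continuous in total variation; the saving grace is that the Bayesian limit family has a simple location--scale Gaussian structure, which together with the continuity of $x\mapsto\nabla_\theta H(x,\theta^c)$ and $x\mapsto\sigma_x$ gives the required uniform control.
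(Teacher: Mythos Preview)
Your proposal is correct and follows essentially the same three-step route as the paper: verify $f_n,\bar f\in C(\mathcal{X})$, establish finite-dimensional convergence by extending the pointwise proofs of \cref{thm4.4}, \cref{thm4.8}, \cref{thm4.11}, and check stochastic equicontinuity via the Taylor decomposition and the Gaussian location--scale structure, then invoke \cref{lemma4.12}. One simplification worth noting: for the CVaR equicontinuity step the paper bypasses the Gaussian machinery entirely and uses the subadditivity bound from \cref{lemma3.8} to get directly $|g_n(x)-g_n(x')|\le \frac{1}{1-\alpha}\E_{\mathbb{P}_n}\bigl[\|\nabla_\theta H(x,\theta^c)-\nabla_\theta H(x',\theta^c)\|\,\|Z_n\|+|e(x,\theta)-e(x',\theta)|\,\|Z_n\|\bigr]$, which reduces the argument to the mean case; also, in your finite-dimensional and equicontinuity sketches be careful to distinguish the posterior random variable $Z_n(\theta)$ from the data-measurable quantities $\Delta_n$ or $\E_{\mathbb{P}_n}[Z_n]$ that actually appear after applying $\rho$.
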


\begin{proof}
{\bf Step 1.} Since $\mathcal{X} \times \Theta$ is compact by the Tychonoff Product Theorem (see, e.g., \cite{royden1988real}, page 245), $H$ is uniformly continuous on $\mathcal{X} \times \Theta$ by the Heine-Cantor Theorem (see, e.g., \cite{rudin1964principles}, Theorem 4.19). Thus, for any $\epsilon>0$, there exists $\delta>0$ such that $|H(x, \theta) - H(x', \theta')|<\epsilon$ as long as $\|(x, \theta) - (x', \theta')\| < \delta$. For the mean, VaR and CVaR formulations, if $\|(x, \theta) - (x', \theta)\| = \|x-x'\|<\delta$, then by \cref{lemma3.8},
\begin{equation*}
\big|\rho_{\,\mathbb{P}_n}\{H(x,\theta)\} - \rho_{\,\mathbb{P}_n}\{H(x', \theta)\}\big| \le \sup_{\theta \in \Theta} \big|H(x, \theta) - H(x',\theta)\big| < \epsilon.
\end{equation*}
So $\rho_{\,\mathbb{P}_n}\{H(\cdot, \theta)\}$ is uniformly continuous on $\mathcal{X}$ for mean, VaR and CVaR. The case of mean-variance follows from the continuity of $H^2$ on $\mathcal{X} \times \Theta$.

{\bf Step 2.} The next step is to show the weak convergence of finite-dimensional distributions. Since mean, VaR and CVaR are linear functionals, we have
$$g_n(x) = \rho_{\, \mathbb{P}_n}\left\{\sqrt{n} [H(x, \theta) - H(x, \theta^c)] \right\}.$$
Fix a finite sequence $x_1, x_2, \ldots, x_k \in \mathcal{X}$. For $[g_n(x_1), g_n(x_2), \ldots, g_n(x_k)]$, we apply Taylor expansion inside the functional $\rho$ for each dimension. The remainder terms also form a $k$-dimensional random vector, which converges in probability to 0 if and only if each dimension does. Thus, the proofs of \cref{thm4.4,thm4.8,thm4.11} can be easily extended to show that each formulation's finite-dimensional distributions converge weakly to that of $Y_x$ defined in the statement of \cref{thm4.13}.
 
{\bf Step 3.} 
By Theorem 7.5 in \cite{billingsley2013convergence}, the proof will be complete if we show that $g_n(\cdot)$ is s.e. (defined as in \cref{eq4.18}) for all four choices of $\rho$, where the specific forms of $Y_x$ follows from \cref{thm4.4,thm4.8,thm4.11}. We now prove s.e. for each choice of $\rho$.

(i) Mean formulation:
By Taylor expansion,
$$g_n(x) = \nabla_\theta H(x, \theta^c)^\intercal \E_{\mathbb{P}_n}[\sqrt{n}(\theta - \theta^c)] + \E_{\mathbb{P}_n}\left[e(x,\theta)\|\sqrt{n}(\theta - \theta^c)\|\right],$$
where it suffices to show s.e. for the two terms in the RHS. Since $\E_{\mathbb{P}_n}[\sqrt{n}(\theta - \theta^c)]$ converges weakly by \cref{assump4.1},  for any $\eta>0$, there exists $ M_\eta > 0$ such that
\begin{equation*}
\mathbb{P}^{\N}_{\theta^c}(\| \E_{\mathbb{P}_n}[\sqrt{n}(\theta -\theta^c)]\| < M_\eta) > 1- \eta, \quad \forall n.
\end{equation*}
Since we assume that $\nabla_\theta H(\cdot, \theta^c)$ is continuous (and hence uniformly continuous) on $\mathcal{X}$, for any $\epsilon>0$, there exists $\delta_\eta>0$ such that
\begin{equation*}
\sup_{\|x- x'\|<\delta_\eta \atop x, x' \in \mathcal{X}} \|\nabla_\theta H(x, \theta^c) - \nabla_\theta H(x', \theta^c)\| < \epsilon/M_\eta.
\end{equation*}
It follows that on the event $\{\|\E_{\mathbb{P}_n}[\sqrt{n}(\theta - \theta^c)]\| < M_\eta \}$, we have
\begin{equation*}
\zeta\left(\nabla_\theta H(x, \theta^c)^\intercal \E_{\mathbb{P}_n}[\sqrt{n}(\theta - \theta^c)], \delta_\eta \right) < \frac{\epsilon}{M_\eta} M_\eta = \epsilon,
\end{equation*}
where $\zeta$ is the modulus of continuity defined in \cref{eq4.19}. Therefore, the first term has the s.e. property. For the second term, we only need to show that
\begin{equation*}
\sup_{x \in \mathcal{X}} \big|\E_{\mathbb{P}_n}\left[e(x, \theta) \|\sqrt{n}(\theta - \theta^c)\|\right]\big| \Rightarrow 0.
\end{equation*}
Since
\begin{equation*}
\sup_{x \in \mathcal{X}} \big|\E_{\mathbb{P}_n}[e(x, \theta) \|\sqrt{n}(\theta - \theta^c)\|]\big| \le \E_{\mathbb{P}_n}\left[\sup_{x \in \mathcal{X}} |e(x, \theta)| \|\sqrt{n}(\theta - \theta^c)\|\right],
\end{equation*}
it suffices to show for $\theta \sim \mathbb{P}_n$ that $\sup_{x \in \mathcal{X}} |e(x, \theta)| \Rightarrow 0$ a.s. ($\mathbb{P}^{\N}_{\theta^c}$). However, the continuity of $e$ on $\mathcal{X} \times \Theta$ implies that $\sup_{x\in \mathcal{X}} |e(x, \cdot)|$ is continuous on $\Theta$. Setting $e(x, \theta^c) = 0$ for all $x\in \mathcal{X}$ does not affect the Taylor expansion, so for $\theta \sim \mathbb{P}_n$,
$$\sup_{x \in \mathcal{X}}|e(x, \theta)| \Rightarrow \sup_{x \in \mathcal{X}}|e(x, \theta^c)| = 0 \quad \text{a.s. }(\mathbb{P}^{\N}_{\theta^c}),$$
and the rest follows from the proof of \cref{thm4.4}.

(ii) Mean-variance formulation:
Since $\nabla_\theta H(\cdot, \theta^c)$ is bounded on $\mathcal{X}$ and $e$ is bounded on $\mathcal{X} \times \Theta$, it follows from the proof of \cref{thm4.4} that
$$\sup_{x \in \mathcal{X}} \sqrt{n} \text{Var}_{\mathbb{P}_n}[H(x, \theta)] \Rightarrow 0,$$
which implies the s.e. for mean-variance.

(iii) VaR formulation:
Recall that the proof of \cref{thm4.8} is based on bounding 
\begin{align*}
(\dagger) &= g_n(x) - \text{VaR}_{\mathbb{P}_n}^{\alpha \pm \epsilon_1}\{\nabla_\theta H(x, \theta^c)^\intercal [\sqrt{n}(\theta - \theta^c)] \},
\end{align*}
by \cref{eq4.10} and \cref{lemma4.5} for some $\epsilon_1>0$, and
\begin{align*}
(\dagger\dagger) &=\text{VaR}_{\mathbb{P}_n}^{\alpha \pm \epsilon_1}\{\nabla_\theta H(x,\theta^c)^\intercal [\sqrt{n} (\theta -\theta^c)] \} - \text{VaR}_{\mathbb{P}_n}^{\alpha \pm \epsilon_1}\{\mathcal{N}(\nabla H(x,\theta^c)^\intercal\Delta_{n}, \sigma_x^2) \},
\end{align*}
by \cref{eq4.11}, where the bound on $|(\dagger)|$ depends on $x$ via $e(x, \theta)$, and the bound on $|(\dagger\dagger)|$ does not depend on $x$ due to \cref{lemma4.6}. Since we have for $\theta \sim \mathbb{P}_n$ that $\sup_{x\in \mathcal{X}} |e(x, \theta)| \Rightarrow 0$ a.s. ($\mathbb{P}^{\N}_{\theta^c}$), following the proof of \cref{thm4.8} yields that
\begin{equation*}
\sup_{x \in \mathcal{X}} \big|g_n(x)  - \text{VaR}_{\mathbb{P}_n}^\alpha\{\mathcal{N}(\nabla H(x,\theta^c)^\intercal\Delta_{n}, \sigma_x^2)\big| \rightarrow 0 \quad \text{in probability } (\mathbb{P}^{\N}_{\theta^c}).
\end{equation*}
But we know
$$\text{VaR}^\alpha\{\mathcal{N}(\nabla H(x,\theta^c)^\intercal\Delta_{n}, \sigma_x^2)\} = \nabla_\theta H(x, \theta^c)^\intercal \Delta_{n} + \sigma_x \Phi^{-1}(\alpha)$$
has s.e. since $\nabla_\theta H(\cdot, \theta^c)$ and $\sigma_x^2$ are uniformly continuous on $\mathcal{X}$ and $\Delta_{n}$ converges in distribution. So the case of VaR is proved.

(iv) CVaR formulation:
Since
\begin{equation*}
g_n(x) = \text{CVaR}_{\mathbb{P}_n}^\alpha\{\nabla_\theta H(x, \theta^c)^\intercal [\sqrt{n}(\theta - \theta^c)] + e(x, \theta) \|\sqrt{n}(\theta - \theta^c) \| \},
\end{equation*}
we have
\begin{align*}
\zeta(g_n, \delta) \le& \sup_{\|x- x'\|<\delta\atop x, x' \in \mathcal{X}} \frac{1}{1-\alpha} \left\{\E_{\mathbb{P}_n} \left[\|\nabla_\theta H(x, \theta^c) - \nabla_\theta H(x', \theta^c) \| \|\sqrt{n} (\theta -\theta^c)\| \right] \right.\\
& \quad \left. + \E_{\mathbb{P}_n}\left[|e(x, \theta) - e(x', \theta)| \|\sqrt{n} (\theta -\theta^c)\| \right] \right\},
\end{align*}
and the rest follows from the proof for mean.
\end{proof}

\section{Interpretation of Bayesian risk optimization} \label{sec:5}
We now interpret BRO based on the asymptotic normality results established in \cref{sec:4.2}. Following the notations in \cref{thm4.13}, let $Z$ denote a random variable with distribution $\mathcal{N}(0, [I(\theta^c)]^{-1})$. We write $\sigma_x$ as $\sigma(\cdot)$ to emphasize that it is a function of $x$. Taking the VaR formulation as an example, from the proof of \cref{thm4.13} (for VaR) we have the following weak convergence result in the $C$ space.
\begin{equation*}
\sqrt{n} \left\{\text{VaR}_{\mathbb{P}_n}^\alpha[H(\cdot,\theta)] - H(\cdot, \theta^c) \right\} \Rightarrow \nabla_\theta H(\cdot, \theta^c)^\intercal Z +  \Phi^{-1}(\alpha) \sigma(\cdot).
\end{equation*}
This can be rewritten as
\begin{equation} \label{eq5.1}
\text{VaR}_{\mathbb{P}_n}^\alpha[H(\cdot, \theta)] \stackrel{\mathcal{D}}{=}  H(\cdot, \theta^c) + \frac{\nabla_\theta H(\cdot, \theta^c)^\intercal Z}{\sqrt{n}} + \Phi^{-1}(\alpha) \frac{\sigma(\cdot)}{\sqrt{n}} + o_p\left(\frac{1}{\sqrt{n}} \right),
\end{equation}
where ``$\stackrel{\mathcal{D}}{=}$'' means ``is distributionally equivalent to'', and $o_p(1/\sqrt{n})$ stands for a term whose product with $\sqrt{n}$ converges to 0 in probability $(\mathbb{P}^{\N}_{\theta^c})$ uniformly in $x$. The LHS of \cref{eq5.1} is the VaR objective we propose to minimize, and the RHS can be viewed as the sum of the true objective $H(\cdot, \theta^c)$ and some error terms. Compared with the mean formulation, whose objective can be written as
\begin{equation} \label{eq5.2}
\E_{\mathbb{P}_n}[H(\cdot,\theta)] \stackrel{\mathcal{D}}{=}  H(\cdot, \theta^c) + \frac{\nabla_\theta H(\cdot, \theta^c)^\intercal Z}{\sqrt{n}} + o_p\left(\frac{1}{\sqrt{n}} \right),
\end{equation}
we see that \cref{eq5.1} has an extra deterministic bias term $\Phi^{-1}(\alpha) \sigma(\cdot) / \sqrt{n}$ that vanishes as $n \rightarrow \infty$. Combining \cref{eq5.1} and \cref{eq5.2}, we have
\begin{equation*}
\text{VaR}_{\mathbb{P}_n}^\alpha[H(\cdot,\theta)] \stackrel{\mathcal{D}}{=}  \E_{\mathbb{P}_n}[H(\cdot, \theta)] + \Phi^{-1}(\alpha) \frac{\sigma(\cdot)}{\sqrt{n}} + o_p\left(\frac{1}{\sqrt{n}} \right).
\end{equation*}
Therefore, the VaR formulation's objective approximately equals a weighted sum of posterior mean and a bias $\sigma(\cdot) / \sqrt{n}$, where the weight of the bias is $\Phi^{-1}(\alpha)$. Although the bias diminishes as $n \rightarrow \infty$, it has an undeniable impact on the VaR objective when $n$ is small. In particular, if $n$ is not too large (e.g. 20) and $\alpha$ is close to 1 (e.g. 99\%), it is possible for the bias term to dominate $\E_{\mathbb{P}_n}[H(x,\theta)]$ and we are close to solving $\min_{x\in \mathcal{X}} \sigma_x /\sqrt{n}$. Similarly, the CVaR objective can be rewritten as
\begin{equation*}
\text{CVaR}_{\mathbb{P}_n}^\alpha\{H(\cdot, \theta)\} \stackrel{\mathcal{D}}{=}  \E_{\mathbb{P}_n}[H(\cdot, \theta)] + \frac{\phi(\Phi^{-1}(\alpha))} {1 - \alpha} \frac{\sigma(\cdot)}{\sqrt{n}} + o_p\left(\frac{1}{\sqrt{n}} \right).
\end{equation*}
For the mean-variance formulation, by imposing appropriate conditions of uniform integrability, it can be shown that the variance satisfies
\begin{equation*}
\text{Var}_{\mathbb{P}_n}[H(\cdot, \theta)] = \frac{1}{n}\text{Var}_{\mathbb{P}_n}\{\sqrt{n}[H(\cdot, \theta) - H(\cdot, \theta^c)]\} \stackrel{\mathcal{D}}{=}  \frac{\sigma^2(\cdot)}{n} + o_p\left(\frac{1}{n} \right).
\end{equation*}
Thus, the objective functions of the mean-variance, VaR and CVaR formulations are all approximately equivalent to a weighted sum of the mean objective and $\sigma(\cdot) / \sqrt{n}$ (or $(\sigma(\cdot) / \sqrt{n})^2$), where the weight of $\sigma(\cdot) / \sqrt{n}$ is controlled by $\alpha$ in the VaR and CVaR formulations, and the weight of $(\sigma(\cdot) / \sqrt{n})^2$ is controlled by the constant $w$ in the mean-variance formulation.

At this point, one naturally wonders the implication of minimizing $\sigma(\cdot)/\sqrt{n}$. For a fixed $x\in \mathcal{X}$, \cref{thm4.4,thm4.8,thm4.11} allow the following asymptotical valid $100(1-\beta)\%$ confidence intervals (CIs) of $H(x, \theta^c)$ (in the form of center $\pm$ half-width).
\begin{enumerate}
\item[(i)]
The mean and mean-variance formulation
\begin{equation*}
\big(\E_{\mathbb{P}_n}[H(x, \theta)] + w\text{Var}_{\mathbb{P}_n}[H(x,\theta)]\big) \pm z_{1-\frac{\beta}{2}} \frac{\sigma_x}{\sqrt{n}},
\end{equation*}
\item[(ii)]
The VaR formulation
\begin{equation*}
\left(\text{VaR}_{\mathbb{P}_n}^{\alpha}[H(x,\theta)] - \frac{\Phi^{-1}(\alpha) \sigma_x}{\sqrt{n}} \right) \pm z_{1 - \frac{\beta}{2}} \frac{\sigma_x}{\sqrt{n}},
\end{equation*}
\item[(iii)]
The CVaR formulation
\begin{equation*}
\left(\text{CVaR}_{\mathbb{P}_n}^{\alpha}[H(x,\theta)] - \frac{\phi(\Phi^{-1}(\alpha)) \sigma_x}{(1-\alpha)\sqrt{n}}\right) \pm z_{1 - \frac{\beta}{2}} \frac{\sigma_x}{\sqrt{n}},
\end{equation*}
\end{enumerate}
 where $z_{1-\beta}$ denotes the $(1-\beta)$-quantile of a standard normal distribution. Observe that $\sigma_x/\sqrt{n}$ is exactly proportional to the half-width of the CI, where narrower CI implies higher accuracy of estimating the true performance $H(x, \theta^c)$, while a wider CI indicates higher risk due to less confidence about how a solution actually performs. {\emph{In other words, BRO is essentially seeking a tradeoff between posterior expected performance and the robustness in actual performance.}} It is also interesting to notice that $\sigma_x$ depends on $\nabla_\theta H(x, \theta^c)$ and $I(\theta^c)$, where $\nabla_\theta H(x, \theta^c)$ is the sensitivity of the function $H$ to the perturbation of the parameter $\theta^c$ (i.e. the distributional uncertainty), while $I(\theta^c)$ is the Fisher information that captures the amount of information a data sample carries about the true parameter.

\section{Concluding remarks} \label{sec:6}
We formally propose a framework of Bayesian risk optimization (BRO) for data-driven stochastic optimization problems, and study the implications of BRO by establishing a series of consistency and asymptotic normality results. The analysis on asymptotics leads to an important insight: BRO explicitly seeks a tradeoff between posterior mean performance and the risk in a solution's actual performance. A question of practical interest is how to choose the weight  in the mean-variance formulation or the risk level in the VaR and CVaR formulations, since these two parameters control the balance between posterior expected performance and robustness. In addition, our proofs assume compactness of the parameter space, but it is worth studying more general cases since many priors are not supported on compact sets. It is also interesting to consider a nonparametric setting with a prior of Dirichlet process, though the associated asymptotics could be much more complicated.

\section*{Acknowledgement} We would like to thank Professor Alexander Shapiro for making valuable comments on our paper. We also thank the editor and the anonymous reviewers for significantly improving the presentation of the paper.

\bibliographystyle{ieeetr}
\bibliography{risk_formulations}
\end{document}
%